% LaTeX Article Template
\documentclass[11pt,reqno,twoside]{amsart}
\usepackage{amssymb,amsmath,amsthm,soul,color,paralist}
\usepackage{t1enc}
\usepackage[cp1250]{inputenc}
\usepackage{a4,indentfirst,latexsym}
\usepackage{graphics}
\usepackage{mathrsfs}
\usepackage{cite,enumitem,graphicx}
\usepackage[colorlinks=true,urlcolor=blue,
citecolor=red,linkcolor=blue,linktocpage,pdfpagelabels,
bookmarksnumbered,bookmarksopen]{hyperref}
\usepackage[english]{babel}
\usepackage[left=2.50cm,right=2.50cm,top=2.72cm,bottom=2.72cm]{geometry}
\usepackage[metapost]{mfpic}
%\opengraphsfile{myfigs}
\usepackage[hyperpageref]{backref}
\usepackage[colorinlistoftodos]{todonotes}
\usepackage[normalem]{ulem}

\makeatletter
\providecommand\@dotsep{5}
\def\listtodoname{List of Todos}
\def\listoftodos{\@starttoc{tdo}\listtodoname}
\makeatother

\numberwithin{equation}{section}

\newcommand{\la}{\lambda}
\newcommand{\ri}{\rightarrow}
\newcommand{\q}{q^{*}_{s}}
\newcommand{\p}{p^{*}_{s}}

\newcommand{\R}{\mathbb{R}}
\newcommand{\X}{\mathbb{X}}

\newcommand{\C}{\mathcal{C}}
\newcommand{\B}{\mathcal{B}}
\newcommand{\J}{\mathcal{J}}
\newcommand{\I}{\mathcal{I}}
\newcommand{\N}{\mathcal{N}}

\newcommand{\A}{\mathcal{A}}

\DeclareMathOperator{\dive}{div}
\DeclareMathOperator{\supp}{supp}
\DeclareMathOperator{\e}{\varepsilon}

\newtheorem{theorem}{Theorem}[section]
\newtheorem{corollary}{Corollary}

\newtheorem{lemma}[theorem]{Lemma}
\newtheorem{proposition}{Proposition}

\theoremstyle{definition}

\keywords{Fractional $p\&q$ Laplacians; variational methods; Ljusternik-Schnirelman theory}
\subjclass[2010]{
47G20, %Integro-differential operators
35R11, %Fractional partial differential equations
35A15, %Variational methods
58E05} %Abstract critical point theory (Morse theory, Ljusternik-Schnirelman (Lyusternik-Shnirel?man) 

\date{}

\begin{document}
\title[Fractional $p\&q$ Laplacian]{Existence, multiplicity and concentration for a class of fractional $p\&q$ Laplacian problems in $\R^{N}$}

\author[C. O. Alves]{Claudianor O. Alves}
\address{Claudianor O. Alves\hfill\break\indent 
Universidade Federal de Campina Grande,\hfill\break\indent
Unidade Academica de Matematica\hfill\break\indent
CEP: 58429-900, Campina Grande-PB, Brazil}
\email{coalves@mat.ufcg.edu.br}

\author[V. Ambrosio]{Vincenzo Ambrosio}
\address{Vincenzo Ambrosio\hfill\break\indent 
Dipartimento di Ingegneria Industriale e Scienze Matematiche \hfill\break\indent
Universit\`a Politecnica delle Marche\hfill\break\indent
Via Brecce Bianche, 12\hfill\break\indent
60131 Ancona (Italy)}
\email{ambrosio@dipmat.univpm.it}

\author[T. Isernia]{Teresa Isernia}
\address{Teresa Isernia\hfill\break\indent
Dipartimento di Ingegneria Industriale e Scienze Matematiche \hfill\break\indent
Universit\`a Politecnica delle Marche\hfill\break\indent
Via Brecce Bianche, 12\hfill\break\indent
60131 Ancona (Italy)}
\email{isernia@dipmat.univpm.it}

\begin{abstract}
In this work we consider the following class of fractional $p\&q$ Laplacian problems
\begin{equation*}
(-\Delta)_{p}^{s}u+ (-\Delta)_{q}^{s}u + V(\e x) (|u|^{p-2}u + |u|^{q-2}u)= f(u) \mbox{ in } \R^{N},
\end{equation*}
where $\e>0$ is a parameter, $s\in (0, 1)$, $1< p<q<\frac{N}{s}$, $(-\Delta)^{s}_{t}$, with $t\in \{p,q\}$, is the fractional $t$-Laplacian operator, $V:\R^{N}\rightarrow \R$ is a continuous potential and $f:\R\rightarrow \R$ is a $\C^{1}$-function with subcritical growth.
Applying minimax theorems and the Ljusternik-Schnirelmann theory, we investigate the existence, multiplicity and concentration of nontrivial solutions provided that $\e$ is sufficiently small.
\end{abstract}

\maketitle

\section{Introduction}

In this paper we are concerned with the existence, multiplicity and concentration results of nonnegative solutions for the following class of fractional $p\&q$ Laplacian problems
\begin{equation}\label{P}
(-\Delta)_{p}^{s}u+ (-\Delta)_{q}^{s}u + V(\e x) (|u|^{p-2}u + |u|^{q-2}u)= f(u) \quad \mbox{ in } \R^{N},
\end{equation}
where $\e>0$ is a small parameter, $s\in (0, 1)$, $1< p<q<\frac{N}{s}$, $V:\R^{N}\ri \R$ is a continuous function verifying the following condition introduced by Rabinowitz in \cite{Rab}:
\begin{equation*} \tag{$V_{0}$}\label{V0}
V_{\infty}:= \liminf_{|x|\ri \infty} V(x) >V_{0}:= \inf_{\R^{N}} V(x)>0,
\end{equation*}
and the nonlinearity $f\in \C^{1}(\R, \R)$ satisfies the following conditions:
\begin{compactenum}
\item [$(f_{1})$] $f(t)=0$ for all $t\leq 0$;
\item [$(f_{2})$] $\displaystyle{\lim_{|t|\ri 0} \frac{|f(t)|}{|t|^{p-1}}=0}$;
\item [$(f_{3})$] there exists $r\in (q, \q)$ such that $\displaystyle{\lim_{|t|\ri \infty} \frac{|f(t)|}{|t|^{r-1}}=0}$, where $\q= \frac{Nq}{N-sq}$;
\item [$(f_{4})$] $\displaystyle{\lim_{|t|\ri \infty} \frac{F(t)}{t^{q}}=\infty}$, where $\displaystyle{F(t)=\int_{0}^{t} f(\tau) d\tau}$;
\item [$(f_{5})$] $\displaystyle{\frac{f(t)}{t^{q-1}}}$ is increasing for $t>0$.
\end{compactenum}

The operator $(-\Delta)^{s}_{t}$, with $t\in \{p,q\}$, is the fractional $t$-Laplacian which may be defined for any $u\in \C^{\infty}_{c}(\R^{N})$ by
\begin{equation*}
(-\Delta)_{t}^{s}u(x):=2\, \lim_{\e\rightarrow 0} \int_{\R^{N}\setminus \B_{\e}(x)} \frac{|u(x)-u(y)|^{t-2}(u(x)-u(y))}{|x-y|^{N+st}}\, dxdy \quad (x\in \R^{N}).
\end{equation*}
When $s=1$, equation \eqref{P} becomes a $p\&q$ elliptic problem of the form
\begin{equation}\label{pq}
-\Delta_{p}u-\Delta_{q} u+|u|^{p-2}u+|u|^{q-2}u= f(x, u) \quad \mbox{ in } \R^{N}.
\end{equation}
As explained in \cite{CIL}, one of the mean reasons of studying \eqref{pq} is connected to the more general reaction-diffusion system:
$$
u_{t}=\dive(D(u)\nabla u)+c(x,u) \mbox{ and } D(u)=|\nabla u|^{p-2}+|\nabla u|^{q-2},
$$
which appears in biophysics, plasma physics and chemical reaction design. Indeed, in these applications, $u$  stands for a concentration, $\dive(D(u) \nabla u)$ is the diffusion with diffusion coefficient $D(u)$, and the reaction term $c(x, u)$ relates to source and loss processes. We point out that classical $p\&q$ Laplacian problems in bounded or unbounded domains have been studied by several authors; see for instance \cite{BF, CIL, Fig1, Fig2, HL, LG, LL, MP} and the references therein.

However, the study of the fractional $p$-Laplacian operator has achieved a tremendous popularity in the last decade.
For instance, in \cite{FrP, LLq} the authors studied fractional $p$-eigenvalue problems. Regularity results for weak solutions have been established in \cite{DKP1, DKP2, IMS, KMS}. We also mention \cite{A1, A2, AI3, FP, MMB, Torres} for
different existence and multiplicity results for problems in bounded domains or in the whole of $\R^{N}$.
More in general, nonlocal operators and fractional spaces are extensively studied due to their great application in several contexts such as obstacle problem, optimization, finance, phase transition, material science, anomalous diffusion, soft thin films, multiple scattering, quasi-geostrophic flows, water waves, and so on. For more details we refer to \cite{DPV, MBRS}.

When $p=q=2$, \eqref{P} is equivalent to the well-known fractional Schr\"odinger equation of the type
\begin{equation}\label{FSE}
\e^{2s}(-\Delta)^{s}u+ V(x) u= f(u) \quad \mbox{ in } \R^{N},
\end{equation}
which appears when we look for standing wave solutions $\psi(x,t)=e^{-\frac{\imath c t}{\e}}$of the following time dependent fractional Schr\"odinger equation
$$
\imath \e \frac{\partial \psi}{\partial t}=\e^{2s}(-\Delta)^{s}\psi+W(x)\psi-f(|\psi|) \quad \mbox{ in } \R^{N}\times \R.
$$
The above equation was derived by Laskin and plays a fundamental role in the study of fractional quantum mechanics; see \cite{Laskin1} for more details. In the last two decades many authors studied existence, multiplicity and concentration of nontrivial solutions to \eqref{FSE} assuming different conditions on the potential $V(x)$ and considering nonlinearities with subcritical or critical growth; see \cite{AM, A3, AI, AI2, DPMV, FQT, FS, Secchi}. For instance, in \cite{FS}, the authors used Nehari manifold arguments and  Ljusternik-Schnirelmann theory to deduce the existence and multiplicity of solutions to \eqref{FSE} requiring that $f$ is a $\C^{1}$-function with subcritical growth and verifying the Ambrosetti-Rabinowitz condition \cite{AR}:
\begin{equation}\tag{AR}\label{AR}
\exists \mu>2 \, : \, 0<\mu F(t)\leq tf(t) \quad \forall t>0.
\end{equation}
This assumption is quite natural when we investigate superlinear problems because it
guarantees that Palais-Smale sequence of the energy functional associated to the problem under consideration is bounded. However, \eqref{AR} is very restrictive and eliminates many nonlinearities. Therefore, several authors sought to introduce conditions weaker than \eqref{AR}; see for instance \cite{J, LW, MS, SZ, SW}.

In the present paper we deal with multiple solutions for the fractional problem \eqref{P} applying the Ljusternik-Schnirelmann category theory and without requiring \eqref{AR}. We emphasize that in the papers treating the existence of multiple solutions via Ljusternik-Schnirelmann category, a fundamental step is the verification of $(PS)$ condition (Palais-Smale condition), which is not proved in most papers without \eqref{AR}, because the Cerami's sequence works better for problem with this type of nonlinearity. For example, in the local setting, in \cite{LW} the authors considered a Schr\"odinger equation under a compactness condition on the potential $V$. Again, in \cite{SW},  the authors proved the $(PS)$ condition for functionals of the type $\Phi(u)=\frac{\|u\|^{2}}{2}-I(u)$, assuming that $I$ is weakly continuous.
In the nonlocal framework, we mention the papers \cite{AlvAmb, A4, AH} in which the existence of positive solutions for \eqref{FSE} is considered without requiring \eqref{AR}.

Motivated by the above facts and by \cite{AFpq, FS}, in this work we show that it is possible to verify the $(PS)$ condition on a Nehari manifold without requiring \eqref{AR}, and this represents the novelty in the study of problems like \eqref{P}.
We point out that as far as we know, in literature appear only few papers on fractional $p\&q$ Laplacian problems \cite{Apq, CB}, but no results on the multiplicity of solutions for problem \eqref{P} are available. So the aim of this work is to give a first result in this direction.

Since we deal with the multiplicity of solutions to \eqref{P}, we recall that if $Y$ is a given closed set of a topological space $X$, we denote by $cat_{X}(Y)$ the Ljusternik-Schnirelmann category of $Y$ in $X$, that is the least number of closed and contractible sets in $X$ which cover $Y$; see \cite{W}.

Now we state our main results.
\begin{theorem}\label{thm2}
Assume that \eqref{V0} and $(f_{1})$-$(f_{5})$ hold. Then, there exists $\bar{\e}>0$ such that problem \eqref{P} has a nonnegative ground state solution $u_{\e}$ for all $\e \in (0, \bar{\e})$. Moreover, for each sequence $\e_{n}\ri 0$, there is a subsequence such that for each $n\in \mathbb{N}$, the solution $u_{\e_{n}}$ concentrates
%up to a subsequence, the family $\{u_{\e_{n}}\}$ concentrate 
around a point $x_{0}\in \R^{N}$ such that $V(x_{0})= V_{0}$. More precisely, there exists $C>0$ such that for all $\delta>0$, there exist $\bar{R}>0$ and $n_{0}\in \mathbb{N}$ such that
\begin{align*}
\int_{\R^{N}\setminus \B_{\e_{n}\bar{R}}(y)} f(u_{\e_{n}})u_{\e_{n}}\,dx<\e_{n}^{N}\delta \quad \mbox{ and } \quad \int_{\B_{\e_{n}\bar{R}}(y)} f(u_{\e_{n}})u_{\e_{n}}\,dx\geq C\e_{n}^{N}
\end{align*}
for all $n\geq n_{0}$.
\end{theorem}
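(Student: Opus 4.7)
The plan is to attack the problem through a variational/minimax approach on the Nehari manifold associated with the energy of \eqref{P}. On the product space $W_{\e} := W^{s,p}(\R^{N}) \cap W^{s,q}(\R^{N})$, endowed with the natural norm involving $V(\e x)$, I would introduce
$$
\J_{\e}(u) = \frac{1}{p}\left([u]_{s,p}^{p} + \int_{\R^{N}} V(\e x)|u|^{p}\,dx\right) + \frac{1}{q}\left([u]_{s,q}^{q} + \int_{\R^{N}} V(\e x)|u|^{q}\,dx\right) - \int_{\R^{N}} F(u)\,dx.
$$
Hypotheses $(f_{1})$-$(f_{5})$ supply the mountain pass geometry; moreover, the strict monotonicity in $(f_{5})$ combined with $q>1$ makes the Nehari set $\N_{\e}$ a natural $\C^{1}$ constraint on which each nontrivial ray meets $\N_{\e}$ at a unique maximum, so the level $c_{\e} := \inf_{\N_{\e}} \J_{\e}$ coincides with the mountain pass level. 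In parallel I would develop the autonomous problem at constant potential $\mu>0$, obtaining a functional $\J_{\mu}$, a manifold $\N_{\mu}$, a level $m_{\mu}$, and the crucial strict monotonicity $m_{\mu_{1}} < m_{\mu_{2}}$ when $\mu_{1} < \mu_{2}$.

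The key novelty is the verification of the Palais-Smale condition on $\N_{\e}$ without \eqref{AR}. Boundedness of a $(PS)_{c}$ sequence $\{u_{n}\}$ would follow from $(f_{4})$-$(f_{5})$ by noting that $Q(t) := \tfrac{1}{q}f(t)t - F(t)$ is nonnegative and strictly increasing on $(0,\infty)$; combining $\J_{\e}(u_{n})$ with $\tfrac{1}{q}\langle \J_{\e}'(u_{n}),u_{n}\rangle$ then yields a priori control of $\|u_{n}\|_{W_{\e}}$. From a weak limit $u$ one would set up the usual splitting $u_{n} = u + w_{n}$ and apply Brezis-Lieb type identities to each Gagliardo seminorm and to $\int F$. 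To rule out loss of mass at infinity, I would establish the strict level comparison $c_{\e} < m_{V_{0}}$ for $\e$ small by testing $\J_{\e}$ on a translated cut-off of a ground state of $\J_{V_{0}}$ placed near a minimum of $V$, with separate estimates for the $p$- and $q$-Gagliardo seminorms of the cut-off. Strong convergence of PS sequences would then give attainment of $c_{\e}$, and replacing $u$ by $u_{+}$ yields nonnegativity via $(f_{1})$ combined with the fractional Kato-type inequality $|x-y|^{t-2}(x-y)(x_{+}-y_{+}) \geq |x_{+}-y_{+}|^{t}$.

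For the concentration statement, take $\e_{n}\to 0$ and let $u_{n}$ denote the associated ground states. A first step is to show $c_{\e_{n}}\to m_{V_{0}}$, using the cut-off construction above for the upper bound and $V(\e x)\geq V_{0}$ for the lower bound. A Lions-type nonvanishing argument then produces $y_{n}\in \R^{N}$ and $R,\beta>0$ with
$$
\liminf_{n\to\infty} \int_{\B_{R}(y_{n})} (|u_{n}|^{p} + |u_{n}|^{q})\,dx \geq \beta,
$$
since otherwise $\int F(u_{n})\,dx\to 0$ and $c_{\e_{n}}\to 0$. Setting $v_{n}(x) := u_{n}(x+y_{n})$, I would extract a nontrivial weak limit $v$; the sequence $\e_{n}y_{n}$ must be bounded, for otherwise \eqref{V0} and Fatou would give $\liminf \J_{\e_{n}}(u_{n}) \geq \J_{V_{\infty}}(v) \geq m_{V_{\infty}} > m_{V_{0}}$. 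Hence $\e_{n} y_{n}\to x_{0}$ along a subsequence, and the same Fatou comparison, together with strict monotonicity $\mu \mapsto m_{\mu}$, forces $V(x_{0})=V_{0}$. Matching the energy identity of $v_{n}$ with that of $v$ then upgrades weak to strong convergence in $W^{s,p}\cap W^{s,q}$. The quantitative bounds in the theorem follow by the change of variables $x = \e_{n} z + y_{n}$: the outer integral estimate uses $L^{\infty}$ bounds on $v$ via Moser iteration and its polynomial decay at infinity in the nonlocal framework, while the lower bound on $\B_{\e_{n}\bar R}(y)$ comes from strong convergence together with $\int f(v)v\,dx > 0$.

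The main obstacle, as stressed by the authors, is the global Palais-Smale analysis without \eqref{AR} in the fractional $p\&q$ setting. Concretely, the technical heart of the paper will consist in (i) proving that the Nehari projection is continuous and a homeomorphism from the unit sphere onto $\N_{\e}$ under $(f_{5})$, (ii) deriving the strict comparison $c_{\e} < m_{V_{0}}$ despite the mixed $p$- and $q$-homogeneity of the operators, and (iii) adapting Brezis-Lieb style splittings to both $(-\Delta)_{p}^{s}$ and $(-\Delta)_{q}^{s}$ simultaneously. Once these are in place, the remainder of the argument is a careful duplication of the fractional Schr\"odinger strategy of \cite{FS, AlvAmb}.
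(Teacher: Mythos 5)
Your overall route matches the paper's: Nehari manifold, autonomous problem, level comparison, Lions-type nonvanishing, translations. But three steps in your sketch are either wrong or blocked, and two of them sit at the technical heart that you correctly identify. (i) Your boundedness argument for $(PS)_{c}$ sequences does not work. Writing $Q(t)=\tfrac{1}{q}f(t)t-F(t)$, the combination you propose gives
\begin{equation*}
\J_{\e}(u_{n})-\tfrac{1}{q}\langle \J_{\e}'(u_{n}),u_{n}\rangle
=\Bigl(\tfrac{1}{p}-\tfrac{1}{q}\Bigr)\|u_{n}\|_{V,p}^{p}+\int_{\R^{N}}Q(u_{n})\,dx,
\end{equation*}
and the entire $q$-part of the norm cancels: you obtain no control at all on $[u_{n}]_{s,q}$ or $|u_{n}|_{q}$ (replacing $\tfrac{1}{q}$ by $\tfrac{1}{p}$ only flips the sign of the $q$-term). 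This is exactly where the lack of \eqref{AR} bites and exactly the step the authors single out as new: Lemma \ref{lem3} instead argues by contradiction, normalizing $v_{n}=u_{n}/\|u_{n}\|_{\mu}$, invoking a Lions-type vanishing dichotomy, translating to a nontrivial weak limit, and using Fatou with $(f_{4})$ together with $\J_{\mu}(u_{n})=\max_{t\geq 0}\J_{\mu}(tu_{n})$. (ii) The strict level comparison you state, $c_{\e}<m_{V_{0}}$ ($=c_{V_{0}}$ in the paper's notation), is false for every $\e$: since $V(\e x)\geq V_{0}$ one has $\J_{\e}\geq\J_{V_{0}}$, hence $c_{\e}\geq c_{V_{0}}$ (Lemma \ref{lem4}). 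The inequality that rules out escape to infinity is $c_{\e}<c_{V_{\infty}}$, established in Theorem \ref{thm41} by testing on a cut-off ground state of $\J_{\mu}$ with $\mu\in(V_{0},V_{\infty})$, while Lemma \ref{lem5} supplies the complementary dichotomy for noncompact $(PS)$ sequences; the correct version of your statement is the asymptotic equality $c_{\e}\ri c_{V_{0}}$, which is the separate Lemma \ref{ce}.

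(iii) For the quantitative integral bounds you invoke an $L^{\infty}$ bound and polynomial decay at infinity of the translated profile $v$. The paper explicitly states that decay estimates and H\"older regularity in the spirit of \cite{IMS} are not currently available for the fractional $p\&q$ operator, and this is precisely why the concentration statement is phrased in integral form rather than as pointwise decay of $u_{\e_{n}}$. Proposition \ref{prop61} needs no decay at all: strong convergence $v_{n}\ri v$ in $\X_{V_{0}}$ gives $\int_{\R^{N}}f(v_{n})v_{n}\,dx\ri\int_{\R^{N}}f(v)v\,dx>0$, and combining the growth bound $|f(t)|\leq\xi|t|^{p-1}+C_{\xi}|t|^{r-1}$ with strong $L^{p}$- and $L^{r}$-convergence yields the uniform tail smallness $\int_{\R^{N}\setminus\B_{R}(0)}f(v_{n})v_{n}\,dx<\delta$ for all large $n$; the change of variables $x=\e_{n}z+y_{n}$ then transports both estimates onto balls $\B_{\e_{n}\bar R}(y)$. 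Your route for this step is therefore not merely different but currently impassable.
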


\begin{theorem}\label{thm1}
Under the assumptions of Theorem \ref{thm2}, for any $\delta>0$ there exists $\e_{\delta}>0$ such that, for any $\e \in (0, \e_{\delta})$, problem \eqref{P} has at least $cat_{M_{\delta}}(M)$ nonnegative and nontrivial solutions, where
\begin{equation*}
M=\{x\in \R^{N} : V(x)= V_{0}\} \quad \mbox{ and } \quad M_{\delta}= \{x\in \R^{N} : dist(x, M)\leq \delta\}.
\end{equation*}
\end{theorem}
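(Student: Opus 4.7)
I would follow the Benci--Cerami style argument and realize the multiplicity statement as a lower bound on the Ljusternik--Schnirelmann category of a suitable sublevel of the energy functional. Writing $J_\e$ for the energy associated with \eqref{P} and $\N_\e$ for its Nehari manifold (which is a well defined $\C^1$-manifold thanks to $(f_5)$, giving the standard fibering), the goal is to exhibit, for each small $\e$, a subset of $\N_\e$ whose category in itself is at least $\text{cat}_{M_\delta}(M)$, and to which a deformation/LS multiplicity theorem can be applied.

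First, I would analyze the autonomous \emph{limit problem} obtained by freezing $V\equiv V_0$, whose ground state energy $c_{V_0}$ is the reference level. Using $(f_1)$--$(f_5)$ and standard compactness of radial embeddings in $W^{s,p}(\R^N)\cap W^{s,q}(\R^N)$, I would produce a nonnegative ground state $w$ of the limit equation, and deduce from \eqref{V0} a strict inequality $c_{V_0}<c_{V_\infty}$ that prevents energy loss at infinity. Next I would build a map $\Phi_\e\colon M \to \N_\e$: for $y\in M$, I smoothly cut off $w$ in a small ball and translate/rescale it to concentrate around $y/\e$, then project onto $\N_\e$ via the unique Nehari scalar $t_{\e,y}>0$. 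By dominated convergence and continuity of the projection,
\begin{equation*}
\lim_{\e\to 0^{+}} J_\e(\Phi_\e(y)) = c_{V_0} \quad \text{uniformly in } y\in M.
\end{equation*}
In the opposite direction, I would introduce a barycenter-type map $\beta_\e\colon \N_\e\to \R^N$. A fractional Lions-type concentration lemma, applied to sequences $(u_n)\subset \N_{\e_n}$ with $J_{\e_n}(u_n)\to c_{V_0}$, will show that, up to translation, $u_n$ concentrates at a point of $M$, so $\beta_{\e_n}(u_n)\to y\in M$.

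Consequently, for $\e$ small, $\beta_\e\circ\Phi_\e\colon M\to M_\delta$ is homotopic to the inclusion, and the standard categorical inequality yields
\begin{equation*}
\text{cat}_{\N_\e^{c_{V_0}+h(\e)}}\!\bigl(\N_\e^{c_{V_0}+h(\e)}\bigr) \,\geq\, \text{cat}_{M_\delta}(M),
\end{equation*}
where $h(\e)\to 0^{+}$ and $\N_\e^{c}:=\{u\in\N_\e : J_\e(u)\leq c\}$. Applying the Ljusternik--Schnirelmann multiplicity theorem on $\N_\e$ (equipped with the natural metric from $W^{s,p}\cap W^{s,q}$) then produces at least $\text{cat}_{M_\delta}(M)$ critical points of $J_\e|_{\N_\e}$ in that sublevel; since $\N_\e$ is a natural constraint, they are solutions of \eqref{P}, and $(f_1)$ combined with testing against the negative part ensures nonnegativity and nontriviality.

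The hardest step, and the genuinely novel one stressed by the authors, is the verification of the Palais--Smale condition for $J_\e|_{\N_\e}$ at every level $c<c_{V_0}+h(\e)$ without \eqref{AR}. Boundedness of a $(PS)_c$ sequence $(u_n)$ cannot be read off from the AR inequality and must instead be extracted by combining $(f_4)$--$(f_5)$ with the Nehari identity, e.g.\ analyzing $J_\e(u_n)-\frac{1}{q}\langle J_\e'(u_n),u_n\rangle$ and exploiting the monotonicity of $f(t)/t^{q-1}$. Compactness is then recovered through a splitting/concentration lemma tailored to the fractional $p\&q$ framework: noncompact behaviour would yield nontrivial bubbles solving a limit equation at $V_0$ or $V_\infty$, each carrying energy at least $c_{V_0}$ or $c_{V_\infty}$, and the strict inequality $c_{V_0}<c_{V_\infty}$ rules out bubbles at infinity while forcing uniqueness of the compact bubble in the chosen sublevel. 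Carrying out this analysis simultaneously for the two fractional operators, where the natural energy is controlled by a sum of Gagliardo seminorms rather than a Hilbertian norm, is the technical heart of the proof.
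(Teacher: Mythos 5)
Your plan reproduces the paper's Benci--Cerami barycenter scheme essentially step by step: autonomous ground state at level $c_{V_{0}}$ with $c_{V_{0}}<c_{V_{\infty}}$ (Section~\ref{Sect3}, Proposition~\ref{prop2}, proof of Theorem~\ref{thm41}), the cut-off/translation map $\Phi_{\e}$ with $\lim_{\e\to 0}\J_{\e}(\Phi_{\e}(y))=c_{V_{0}}$ uniformly on $M$ (Lemma~\ref{lem9}), the barycenter map $\beta_{\e}$ with the concentration input (Lemmas~\ref{lem10},~\ref{lem12}, Proposition~\ref{prop6}), the homotopy $\beta_{\e}\circ\Phi_{\e}\simeq \mathrm{id}$ giving $cat_{\widetilde{\N}_{\e}}(\widetilde{\N}_{\e})\geq cat_{M_{\delta}}(M)$, and the LS multiplicity theorem on the Nehari manifold together with Corollary~\ref{cor12} and the negative-part test for nonnegativity.

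The one place your sketch misdescribes the mechanics is the boundedness of $(PS)$ sequences without \eqref{AR}. The quantity $\J_{\e}(u_{n})-\tfrac{1}{q}\langle\J'_{\e}(u_{n}),u_{n}\rangle$ equals $(\tfrac{1}{p}-\tfrac{1}{q})\|u_{n}\|_{V,p}^{p}+\int_{\R^{N}}(\tfrac{1}{q}f(u_{n})u_{n}-F(u_{n}))\,dx$, and by $(f_{5})$ the second term is nonnegative; so this identity controls only the $W^{s,p}$-part of the norm and leaves $\|u_{n}\|_{V,q}$ free. This is exactly the control \eqref{AR} would have supplied, and it is not available here. The paper's Lemma~\ref{lem3} instead argues by contradiction: normalize $v_{n}=u_{n}/\|u_{n}\|_{\mu}$, rule out non-vanishing of $\{v_{n}\}$ using $(f_{4})$ and Fatou against the assumed boundedness of $\J_{\mu}(u_{n})/\|u_{n}\|_{\mu}^{q}$, and then in the vanishing case use the Lions lemma and the Nehari maximality $\J_{\mu}(u_{n})=\max_{t\geq 0}\J_{\mu}(tu_{n})\geq \J_{\mu}(Rv_{n})$ to force $\J_{\mu}(u_{n})\to\infty$, a contradiction. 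With that substitution, your outline matches the paper's proof.
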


\begin{theorem}
Under the assumptions of Theorem \ref{thm2}, every solution to \eqref{P} is bounded.
\end{theorem}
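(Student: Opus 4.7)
The plan is a Moser-type iteration adapted to the nonlocal $p\&q$ setting. As a preliminary step, I would note that any weak solution of \eqref{P} is nonnegative: testing with $u^{-}=\max\{-u,0\}$ and invoking $(f_{1})$ (so the right-hand side vanishes), the standard monotonicity of the fractional $t$-Laplacian gives $[u^{-}]_{s,p}^{p}+[u^{-}]_{s,q}^{q}+\int_{\R^{N}}V(\e x)\bigl((u^{-})^{p}+(u^{-})^{q}\bigr)\,dx\le 0$, whence $u^{-}\equiv 0$. Thus we may assume $u\ge 0$ throughout.

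For the main step, fix $L>0$ and $\beta\ge 1$, set $u_{L}=\min\{u,L\}$, and use $\varphi=u\,u_{L}^{q(\beta-1)}\in W^{s,p}(\R^{N})\cap W^{s,q}(\R^{N})$ as a test function in \eqref{P}. Applying the standard nonlocal algebraic inequality
\begin{equation*}
|a-b|^{q-2}(a-b)\bigl(aA^{q(\beta-1)}-bB^{q(\beta-1)}\bigr)\ \ge\ \frac{C}{\beta^{q-1}}\,\bigl|aA^{\beta-1}-bB^{\beta-1}\bigr|^{q}
\end{equation*}
with $A=u_{L}(x)$, $B=u_{L}(y)$ to bound the $q$-Laplacian contribution from below, discarding the (nonnegative) fractional $p$-Laplacian and potential terms, and invoking the Sobolev embedding $W^{s,q}(\R^{N})\hookrightarrow L^{\q}(\R^{N})$, yields
\begin{equation*}
\bigl\|u\,u_{L}^{\beta-1}\bigr\|_{L^{\q}(\R^{N})}^{q}\ \le\ C\,\beta^{q-1}\!\int_{\R^{N}}f(u)\,u\,u_{L}^{q(\beta-1)}\,dx.
\end{equation*}

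To close the right-hand side I would use $(f_{2})$--$(f_{3})$, which give $|f(u)u|\le C(u^{p}+u^{r})$ with $r\in(q,\q)$, and split the integral over $\{u\le K\}\cup\{u>K\}$ for $K>0$ to be chosen. On $\{u\le K\}$, both $u^{p}$ and $u^{r}$ are controlled by $K$ and the contribution is bounded by $C_{K}\,\|u\,u_{L}^{\beta-1}\|_{L^{q}}^{q}$ up to harmless lower-order terms; on $\{u>K\}$, H\"older's inequality with exponents $\q/q$ and $\q/(\q-q)$ produces a factor
\begin{equation*}
\|u\|_{L^{\q}(\{u>K\})}^{\,r-q}\,\bigl\|u\,u_{L}^{\beta-1}\bigr\|_{L^{\q}}^{q},
\end{equation*}
whose prefactor is arbitrarily small once $K$ is large (recall $u\in L^{\q}$). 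This allows absorption of the $L^{\q}$-norm into the left-hand side.

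Setting $\beta_{k+1}=(\q/q)\,\beta_{k}$ with $\beta_{0}=1$, sending $L\to\infty$ by Fatou at each step, and iterating yields $u\in L^{\beta_{k}\q}(\R^{N})$ for every $k$, with quantitative estimates of the form $\|u\|_{L^{\beta_{k}\q}}\le(C\beta_{k})^{C/\beta_{k}}\|u\|_{L^{\beta_{k-1}\q}}$; passing $k\to\infty$ then gives $u\in L^{\infty}(\R^{N})$. The main obstacle I anticipate is the first-step absorption, which requires a careful choice of $K$ to close the iteration; a secondary technical point is the precise algebraic inequality for the fractional $q$-Laplacian applied to the truncated test function $u\,u_{L}^{q(\beta-1)}$, for which I would follow the Moser-type bootstrap already developed for nonlocal $p\&q$ problems in \cite{Apq,CB}.
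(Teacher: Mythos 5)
Your proposal follows essentially the same route as the paper: the Moser iteration with test function $\varphi=u\,u_{L}^{q(\beta-1)}$, an algebraic inequality that turns the fractional $q$-Laplacian pairing into a lower bound by $[\,u\,u_{L}^{\beta-1}]_{s,q}^{q}$, the observation that the $p$-Laplacian and potential contributions are nonnegative and can be discarded, a splitting of the right-hand side over $\{u\le K\}\cup\{u>K\}$ to absorb the critical term into the left-hand side for large $K$ (using $u\in L^{\q}$), and finally the bootstrap in $\beta$ with $L\to\infty$. The only cosmetic differences are the exact form of the $\beta$-recursion and the exponent in your H\"older prefactor (which should involve $\q-q$, not $r-q$, since one first replaces $u^{r-q}$ by $u^{\q-q}$ on $\{u>K\}$); neither affects the argument.
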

The proof of the above results is obtained applying variational methods and borrowing some ideas developed in \cite{AFpq} to study a class of quasilinear problems which includes the $p\&q$ elliptic case. Anyway, we can not repeat the same arguments exploited in \cite{AFpq} since in our context we have to take care of the appearance of fractional
$p\&q$ Laplacian operators and that our nonlinearity does not verify \eqref{AR}.
For these reasons, we first prove some technical lemmas  which allow us to overcome some difficulties coming from the nonlocal character of the involved fractional operators; see also \cite{AI3}.
Hence, we deal with the existence of solutions for the autonomous problem associated to \eqref{P}. We note that the proof of boundedness of Palais-Smale sequences is completely different from the one given in \cite{AFpq} in which \eqref{AR} is not assumed; see Lemma \ref{lem3}.
After that, we study the existence of solutions to \eqref{P} and,  taking into account some ideas present in \cite{AP, AT}, we  consider the concentration behavior of solutions. We note that the concentration phenomenon obtained in this work is in the integral sense and it is not the same considered in \cite{AFpq}. Indeed, in our framework, it seems very hard to prove that the solutions go to zero at infinity because H\"older continuous regularity results like \cite{HL} (when $s=1$) and \cite{IMS} (for $s\in (0,1)$ and $p=q\in (1, \infty)$), are not currently available for fractional $p\&q$ Laplacian operators.
Subsequently, we combine Nehari manifold arguments and Ljusternik-Schnirelmann category theory to deduce a multiplicity result for \eqref{P}. Finally, we use a variant of the Moser iteration argument \cite{Moser} to get the boundedness of solutions to \eqref{P}.

The paper is organized as follows. In Section \ref{Sect2} we collect some preliminary results. In Section \ref{Sect3} we deal with autonomous fractional $p\&q$ Laplacian problems. In Section \ref{Sect4} we give the proof of Theorem \ref{thm2}. The Section \ref{Sect5} is devoted to the multiplicity of solutions to \eqref{P}. In the last section we prove the boundedness of solutions to \eqref{P}.

\section{Preliminary}\label{Sect2}
\noindent
In this preliminary section we recall some facts about the fractional Sobolev spaces and we prove some technical lemmas which we will use later.

Let $1\leq p\leq \infty$ and $A\subset \R^{N}$. We denote by $|u|_{L^{p}(A)}$ the $L^{p}(A)$-norm of a function $u:\R^{N}\ri \R$ belonging to $L^{p}(A)$. When $A=\R^{N}$, we simply write $|u|_{p}$. We define $\mathcal{D}^{s, p}(\R^{N})$ as the closure of $\C^{\infty}_{c}(\R^{N})$ with respect to
$$
[u]_{s, p}^{p}= \iint_{\R^{2N}} \frac{|u(x)-u(y)|^{p}}{|x-y|^{N+sp}}dxdy.
$$
Let us define $W^{s, p}(\R^{N})$ as the set of functions $u\in L^{p}(\R^{N})$ such that $[u]_{s, p}<\infty$, endowed with the norm
\begin{equation*}
\|u\|_{s, p}^{p}= [u]_{s, p}^{p}+ |u|_{p}^{p}.
\end{equation*}

We recall the following embeddings of the fractional Sobolev spaces into Lebesgue spaces.
\begin{theorem}[\cite{DPV}]\label{Sembedding}
Let $s\in (0,1)$ and $N>sp$. Then there exists a constant $S_{*}>0$
such that for any $u\in \mathcal{D}^{s, p}(\R^{N})$
\begin{equation*}
|u|^{p}_{\p} \leq S_{*}^{-1} [u]^{p}_{s, p}.
\end{equation*}
Moreover, $W^{s, p}(\R^{N})$ is continuously embedded in $L^{t}(\R^{N})$ for any $t\in [p, \p]$ and compactly in $L^{t}(\B_{R}(0))$, for all $R>0$ and for any $t\in [1, \p)$.
\end{theorem}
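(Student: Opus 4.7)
The plan splits the theorem into three parts. The critical embedding $\mathcal{D}^{s,p}(\R^{N}) \hookrightarrow L^{\p}(\R^{N})$ is the core; the subcritical continuous embeddings into $L^{t}(\R^{N})$ for $t \in [p, \p]$ then come from interpolation, and the local compact embedding into $L^{t}(\B_{R}(0))$ for $t \in [1, \p)$ follows via the Fr\'echet--Kolmogorov criterion. For the critical estimate I would follow the dyadic level-set strategy used in DPV. For $u \in \C^{\infty}_{c}(\R^{N})$, define $A_{k} = \{x : |u(x)| > 2^{k}\}$ with $a_{k} = |A_{k}|$; a layer-cake computation gives $|u|_{\p}^{\p} \leq C \sum_{k \in \mathbb{Z}} 2^{k\p} a_{k}$. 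Each $a_{k}$ is then bounded from above by a local contribution to the Gagliardo seminorm, using the fact that for $x \in A_{k+1}$ and $y \notin A_{k}$ the integrand satisfies $|u(x)-u(y)|^{p} \geq 2^{kp}$; a careful balancing via suitably chosen intermediate cubes, followed by summation and a H\"older inequality with exponent $\p/p$, yields $|u|_{\p}^{p} \leq S_{*}^{-1}[u]_{s,p}^{p}$. This extends from $\C^{\infty}_{c}(\R^{N})$ to all of $\mathcal{D}^{s,p}(\R^{N})$ by density together with Fatou's lemma.

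The subcritical continuous embeddings in $L^{t}(\R^{N})$ follow from log-convexity of the $L^{t}$-norms: for $t \in [p, \p]$, write $1/t = \theta/p + (1-\theta)/\p$ for some $\theta \in [0,1]$, and H\"older gives $|u|_{t} \leq |u|_{p}^{\theta}|u|_{\p}^{1-\theta} \leq C\|u\|_{s,p}$, since $|u|_{p} \leq \|u\|_{s,p}$ is trivial and $|u|_{\p} \leq C[u]_{s,p}$ by the critical estimate just established.

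For the local compact embedding I would first establish compactness in $L^{p}(\B_{R}(0))$ via the Fr\'echet--Kolmogorov precompactness criterion. Given a bounded sequence $(u_{n})$ in $W^{s,p}(\R^{N})$, uniform $L^{p}$-boundedness on $\B_{R}(0)$ is immediate; the equi-continuity of translations $\sup_{n}|u_{n}(\cdot+h)-u_{n}|_{L^{p}(\B_{R}(0))}\to 0$ as $|h|\to 0$ is extracted from the Gagliardo seminorm via an averaging over intermediate points in the ball $\B_{|h|/2}(x+h/2)$, producing a modulus of continuity depending only on the $W^{s,p}$-bound. Interpolation between Cauchy convergence in $L^{p}(\B_{R}(0))$ and boundedness in $L^{\p}(\B_{R}(0))$ (from the critical embedding applied to cut-offs) extends compactness to every $t \in (p, \p)$ via $|u_{n}-u_{m}|_{t} \leq |u_{n}-u_{m}|_{p}^{\theta}|u_{n}-u_{m}|_{\p}^{1-\theta}$; for $t \in [1, p]$ the inclusion $L^{p}(\B_{R}(0)) \hookrightarrow L^{t}(\B_{R}(0))$ on a bounded domain covers the remaining range. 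The main obstacle is the critical embedding step, where the level-set decomposition requires a delicate balancing of measures so that the contributions $2^{k\p}a_{k}$ recombine, after the H\"older summation, into the correct power of $[u]_{s,p}$; the translation estimate for compactness is technically intricate but essentially routine once the averaging setup is in place.
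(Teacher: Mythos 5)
Your proposal is correct, but note that the paper itself offers no proof of this statement: it is quoted verbatim from the reference \cite{DPV}, and your sketch faithfully reconstructs exactly the argument given there (the dyadic level-set proof of the critical inequality, interpolation for the intermediate exponents, and a Riesz--Fr\'echet--Kolmogorov/translation argument for local compactness). So you are following essentially the same route as the cited source, and all three steps — including the averaging over $\B_{|h|/2}(x+h/2)$ to convert the Gagliardo seminorm into a modulus of continuity of translations, and the passage from $L^{p}$-compactness to $L^{t}$-compactness for $t\in[1,\p)$ by interpolation and by H\"older on the bounded ball — are sound.
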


Proceeding as in \cite{FQT, Secchi} we can prove the next compactness-Lions type result.
\begin{lemma}\label{lemVT}
Let $N>sp$ and $m\in [p, \p)$. If $\{u_{n}\}$ is a bounded sequence in $W^{s, p}(\R^{N})$ and if
\begin{equation}\label{ter4}
\lim_{n\rightarrow \infty} \sup_{y\in \R^{N}} \int_{\B_{R}(y)} |u_{n}|^{m} dx=0
\end{equation}
for some $R>0$, then $u_{n}\rightarrow 0$ in $L^{t}(\R^{N})$ for all $t\in (p, \p)$.
\end{lemma}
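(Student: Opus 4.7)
The plan is to adapt the classical Lions vanishing lemma to the fractional $p$-Laplacian setting. First, I will cover $\R^N$ by a countable family of balls $\{B_R(y_i)\}_{i\in\mathbb{N}}$ with finite overlap: each point of $\R^N$ belongs to at most $C_N$ of them, where $C_N$ depends only on $N$ (concretely, one may take the centers to form a sufficiently fine cubic lattice).

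For fixed $t \in (m, \p)$, let $\lambda \in (0,1)$ be given by $\tfrac{1}{t} = \tfrac{1-\lambda}{m} + \tfrac{\lambda}{\p}$. The log-convexity of $L^p$-norms applied on each ball yields
\[
|u|_{L^t(B_R(y))} \le |u|_{L^m(B_R(y))}^{1-\lambda}\,|u|_{L^{\p}(B_R(y))}^{\lambda}.
\]
I will raise this to the power $p/\lambda$ so that the $L^{\p}$ factor appears with exponent exactly $p$, and then invoke the local fractional Sobolev inequality $|u|_{L^{\p}(B_R(y))}^{p} \le C\,\|u\|_{W^{s,p}(B_R(y))}^{p}$, which holds on bounded Lipschitz domains and is the local counterpart of Theorem~\ref{Sembedding}. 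Summing over $i$, using the finite-overlap bound $\sum_i \|u\|_{W^{s,p}(B_R(y_i))}^p \le C_N \|u\|_{s,p}^p$ together with the uniform estimate $|u|_{L^m(B_R(y_i))}^{p(1-\lambda)/\lambda} \le \bigl(\sup_y \int_{B_R(y)} |u|^m\, dx\bigr)^{p(1-\lambda)/(m\lambda)}$, I obtain
\[
\sum_i |u_n|_{L^t(B_R(y_i))}^{p/\lambda} \le C\left(\sup_{y\in\R^N} \int_{B_R(y)} |u_n|^m\, dx\right)^{\frac{p(1-\lambda)}{m\lambda}} \|u_n\|_{s,p}^p \longrightarrow 0.
\]

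To pass from this $\ell^{p/\lambda}$ summability of the ball norms to $\ell^t$ summability, observe that when $p/\lambda \le t$ --- equivalently, when $t \ge t_* := p + m - pm/\p$ --- the continuous inclusion $\ell^{p/\lambda} \hookrightarrow \ell^t$ yields
\[
|u_n|_{L^t(\R^N)}^t \le \sum_i |u_n|_{L^t(B_R(y_i))}^t \le \Bigl(\sum_i |u_n|_{L^t(B_R(y_i))}^{p/\lambda}\Bigr)^{t\lambda/p} \longrightarrow 0.
\]
A short computation shows $t_* \in (m, \p)$, hence $u_n \to 0$ in $L^t(\R^N)$ for every $t \in [t_*, \p)$. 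For the remaining range $t \in (p, t_*)$, I interpolate: $|u_n|_{L^t(\R^N)} \le |u_n|_{L^p(\R^N)}^\theta\,|u_n|_{L^{t_*}(\R^N)}^{1-\theta}$ for a suitable $\theta \in (0,1)$, and the boundedness of $\{u_n\}$ in $L^p(\R^N)$ inherited from $W^{s,p}$, combined with the vanishing in $L^{t_*}(\R^N)$ just established, closes the argument.

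The main obstacle is the bookkeeping in the summation step: raising the interpolation inequality to its natural exponent $t$ does not combine with the local Sobolev bound and finite overlap, because the resulting exponent on $|u|_{L^{\p}(B_R(y_i))}$ is strictly less than $p$ and the corresponding series over the cover diverges. The ad hoc choice of raising to the power $p/\lambda$ circumvents this, at the cost of losing direct control on $|u|_{L^t(\R^N)}$ and forcing the subsequent interpolation step to cover the subinterval $(p, t_*)$.
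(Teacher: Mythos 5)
Your argument is correct and follows essentially the same route as the paper: interpolate locally between $L^m$ and $L^{\p}$ on each ball, control $L^{\p}$ by the local $W^{s,p}$-norm, sum over a finite-overlap cover, and finish with a global interpolation to reach all $t\in(p,\p)$. The paper raises the ball estimate to the power $\tau$ instead of $p/\lambda$, which tacitly requires $\alpha\tau\geq p$ (equivalently $\tau\geq t_*$) for the sum over balls to close---the same restriction you make explicit---so the two proofs differ only in exponent bookkeeping.
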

\begin{proof}
Let $\tau \in (m, \p)$ and $u\in W^{s, p}(\R^{N})$. Applying H\"older and Sobolev inequality we can infer
\begin{align*}
|u|_{L^{\tau}(\B_{R}(y))} &\leq |u|_{L^{m}(\B_{R}(y))}^{1-\alpha} |u|_{L^{\p}(\B_{R}(y))}^{\alpha} \\
&\leq C |u|_{L^{m}(\B_{R}(y))}^{1- \alpha} \left ( \int_{\B_{R}(y)} \int_{\B_{R}(y)} \frac{|u(x)- u(y)|^{p}}{|x-y|^{N+sp}} dxdy + \int_{\B_{R}(y)} |u|^{p} dx\right)^{\frac{\alpha}{p}},
\end{align*}
where ${\alpha= \frac{\tau-m}{\p-m}\frac{\p}{\tau}}$.
Now, covering $\R^{N}$ by balls of radius $R$ in such a way that each point of $\R^{N}$ is contained in at most $N+1$ balls, and using the fact that $\{u_{n}\}$ is bounded in $W^{s, p}(\R^{N})$, we find
\begin{align*}
|u_{n}|_{\tau}^{\tau} \leq C \sup_{y\in \R^{N}} \left(\int_{\B_{R}(y)} |u_{n}|^{m} dx \right)^{\left(\frac{m(\p-\tau)}{\p-m}\right)}\ri 0 \quad \mbox{ as } n\ri \infty
\end{align*}
in view of \eqref{ter4}. An interpolation argument gives the thesis.
\end{proof}

The lemma below provides a way to manipulate smooth truncations for the fractional $p$-Laplacian.
Let us note that this result can be seen as a generalization of the second statement of Lemma $5$ in \cite{PP} to the case of the space $W^{s, p}(\R^{N})$ with $p\neq 2$.
\begin{lemma}\label{Psi}
Let $u\in W^{s, p}(\R^{N})$ and $\phi\in \mathcal{C}^{\infty}_{c}(\R^{N})$ be such that $0\leq \phi\leq 1$, $\phi=1$ in $\B_{1}(0)$ and $\phi=0$ in $\B_{2}^{c}(0)$. Set $\phi_{r}(x)=\phi(\frac{x}{r})$.  Then
$$
\lim_{r\rightarrow \infty} [u \phi_{r}-u]_{s, p}=0 \quad \mbox{ and } \quad \lim_{r\rightarrow \infty} |u\phi_{r}-u|_{p}=0. 
$$
\end{lemma}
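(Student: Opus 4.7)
My plan is to split the claim into the $L^{p}$ convergence and the Gagliardo seminorm convergence, and handle them separately.

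For the $L^{p}$ statement, I would just note that $u\phi_{r}(x)-u(x)=u(x)(\phi_{r}(x)-1)$ converges to $0$ pointwise a.e.\ (indeed it is identically $0$ on $\B_{r}(0)$), while $|u\phi_{r}-u|\leq 2|u|\in L^{p}(\R^{N})$. The dominated convergence theorem closes this case.

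For the seminorm, the key is the algebraic decomposition
\begin{equation*}
(u\phi_{r})(x)-(u\phi_{r})(y)-(u(x)-u(y))=(\phi_{r}(x)-1)(u(x)-u(y))+u(y)(\phi_{r}(x)-\phi_{r}(y)),
\end{equation*}
which, after raising to the $p$-th power and using the elementary inequality $|a+b|^{p}\leq 2^{p-1}(|a|^{p}+|b|^{p})$, reduces matters to proving
\begin{equation*}
I_{r}:=\iint_{\R^{2N}}\frac{|\phi_{r}(x)-1|^{p}|u(x)-u(y)|^{p}}{|x-y|^{N+sp}}\,dxdy\to 0
\end{equation*}
and
\begin{equation*}
J_{r}:=\iint_{\R^{2N}}\frac{|u(y)|^{p}|\phi_{r}(x)-\phi_{r}(y)|^{p}}{|x-y|^{N+sp}}\,dxdy\to 0.
\end{equation*}
For $I_{r}$, the integrand is dominated by $2^{p}|u(x)-u(y)|^{p}/|x-y|^{N+sp}\in L^{1}(\R^{2N})$ and converges to $0$ a.e.\ since $\phi_{r}(x)\to 1$; a second application of dominated convergence gives $I_{r}\to 0$.

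The main technical step is the estimate on $J_{r}$, where I would exploit the scaling of $\phi_{r}$. Using $|\phi_{r}(x)-\phi_{r}(y)|\leq \|\nabla\phi\|_{\infty}|x-y|/r$ together with the crude bound $|\phi_{r}(x)-\phi_{r}(y)|\leq 2$, I split the inner $x$-integral at $|x-y|=r$:
\begin{equation*}
\int_{\R^{N}}\frac{|\phi_{r}(x)-\phi_{r}(y)|^{p}}{|x-y|^{N+sp}}\,dx\leq \frac{\|\nabla\phi\|_{\infty}^{p}}{r^{p}}\int_{|z|\leq r}\frac{dz}{|z|^{N+sp-p}}+2^{p}\int_{|z|>r}\frac{dz}{|z|^{N+sp}}=C\,r^{-sp},
\end{equation*}
where the first integral converges because $sp<p$ (as $s<1$) and the constant $C$ depends only on $N,s,p,\phi$. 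Since this bound is uniform in $y$, Fubini yields $J_{r}\leq C r^{-sp}|u|_{p}^{p}\to 0$ (recall that $u\in W^{s,p}(\R^{N})\subset L^{p}(\R^{N})$). Combining the two pieces gives $[u\phi_{r}-u]_{s,p}\to 0$.

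The only delicate point is the quantitative estimate producing the $r^{-sp}$ decay in $J_{r}$; the rest is standard dominated convergence. I expect no essential obstacle beyond careful bookkeeping of the splitting at $|x-y|=r$.
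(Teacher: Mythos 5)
Your proof is correct, and its treatment of the seminorm term is noticeably cleaner than the paper's. You and the paper agree on the easy parts: the $L^{p}$ convergence is dominated convergence, and after the elementary algebraic decomposition one reduces to $I_{r}\to 0$ (again dominated convergence) and $J_{r}\to 0$. The difference is in how $J_{r}$ (the paper's $A_{r}$) is handled. You bound the one-variable kernel
\[
\sup_{y\in\R^{N}}\int_{\R^{N}}\frac{|\phi_{r}(x)-\phi_{r}(y)|^{p}}{|x-y|^{N+sp}}\,dx\leq C\,r^{-sp}
\]
directly, by splitting the $x$-integral at $|x-y|=r$ and using the Lipschitz bound $|\phi_{r}(x)-\phi_{r}(y)|\leq\|\nabla\phi\|_{\infty}|x-y|/r$ on the near-diagonal piece and $|\phi_{r}(x)-\phi_{r}(y)|\leq 2$ on the tail; Fubini then gives $J_{r}\leq C r^{-sp}|u|_{p}^{p}$ in one line. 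The paper instead decomposes $\R^{2N}$ into the three regions $X^{1}_{r},X^{2}_{r},X^{3}_{r}$ based on whether $x,y$ lie in $\B_{2r}(0)$, introduces a second parameter $K$, invokes H\"older with the critical exponent $p^{*}_{s}$ on the far region to extract a $K^{-N}$ term, and concludes with a double limit $r\to\infty$ followed by $K\to\infty$. Your single-parameter argument achieves the same conclusion with an explicit rate $O(r^{-sp})$ and no appeal to the Sobolev embedding; it is both shorter and more quantitative, and I see no gap in it.
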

\begin{proof}
Taking into account that $\phi_{r}u\rightarrow u$ a.e. in $\R^{N}$ as $r\rightarrow \infty$ and $u\in L^{p}(\R^{N})$, and invoking the Dominated Convergence Theorem we have $\lim_{r\rightarrow \infty} |u\phi_{r}-u|_{p}=0$. 
Now, we prove that $\lim_{r\rightarrow \infty} [u \phi_{r}-u]_{s, p}=0$.

Let us note that
\begin{align*}
&[u\phi_{r}-u]^{p}_{s, p}\\
&\leq 2^{p-1} \left[\iint_{\R^{2N}}\!\! \frac{|u(x)|^{p}|\phi_{r}(x)\!-\!\phi_{r}(y)|^{p}}{|x\!-\!y|^{N+sp}}dx dy\!+\!\iint_{\R^{2N}} \frac{|\phi_{r}(x)\!-\!1|^{p}|u(x)\!-\!u(y)|^{p}}{|x\!-\!y|^{N+sp}}dx dy\right] \\
&=:2^{p-1}\left[A_{r}+B_{r}\right].
\end{align*}
Exploiting $|\phi_{r}(x)-1|\leq 2$, $|\phi_{r}(x)-1|\rightarrow 0$ a.e. in $\R^{N}$ and $u\in W^{s, p}(\R^{N})$, from the Dominated Convergence Theorem it follows that $B_{r}\rightarrow 0$ as  $r\rightarrow \infty$.
Next, we aim to show that
$$
A_{r}\rightarrow 0 \quad \mbox{ as } r\rightarrow \infty.
$$
Firstly, we point out that
\begin{align*}
\R^{2N}&=((\R^{N} \setminus \B_{2r}(0))\!\times\! (\R^{N} \setminus \B_{2r}(0)))\cup (\B_{2r}(0)\!\times \! \R^{N}) \cup ((\R^{N} \setminus \B_{2r}(0))\!\times\! \B_{2r}(0))\\
&=: X^{1}_{r}\cup X^{2}_{r} \cup X^{3}_{r}.
\end{align*}
Thus
\begin{align}\label{Pa1}
&\iint_{\R^{2N}} |u(x)|^{p} \frac{|\phi_{r}(x)-\phi_{r}(y)|^{p}}{|x-y|^{N+sp}} \, dx dy \nonumber\\
&\quad =\iint_{X^{1}_{r}} |u(x)|^{p} \frac{|\phi_{r}(x)-\phi_{r}(y)|^{p}}{|x-y|^{N+sp}} \, dx dy
+\iint_{X^{2}_{r}} |u(x)|^{p} \frac{|\phi_{r}(x)-\phi_{r}(y)|^{p}}{|x-y|^{N+sp}} \, dx dy \nonumber\\
&\quad \quad + \iint_{X^{3}_{r}} |u(x)|^{p} \frac{|\phi_{r}(x)-\phi_{r}(y)|^{p}}{|x-y|^{N+sp}} \, dx dy.
\end{align}
Since $\phi=0$ in $\R^{N}\setminus \B_{2}(0)$, we have
\begin{align}\label{Pa2}
\iint_{X^{1}_{r}} |u(x)|^{p} \frac{|\phi_{r}(x)-\phi_{r}(y)|^{p}}{|x-y|^{N+sp}} \, dx dy=0.
\end{align}
Using $0\leq \phi\leq 1$, $|\nabla \phi|\leq 2$ and applying the Mean Value Theorem, we can see that
\begin{align}\label{Pa3}
\iint_{X^{2}_{r}}& |u(x)|^{p} \frac{|\phi_{r}(x)-\phi_{r}(y)|^{p}}{|x-y|^{N+sp}} \, dx dy \nonumber\\
&=\int_{\B_{2r}(0)} \,dx \int_{\{y\in \R^{N}: |x-y|\leq r\}} |u(x)|^{p} \frac{|\phi_{r}(x)-\phi_{r}(y)|^{p}}{|x-y|^{N+sp}} \, dy \nonumber \\
&\quad+\int_{\B_{2r}(0)} \, dx \int_{\{y\in \R^{N}: |x-y|> r\}} |u(x)|^{p} \frac{|\phi_{r}(x)-\phi_{r}(y)|^{p}}{|x-y|^{N+sp}} \, dy  \nonumber\\
&\leq C r^{-p} |\nabla \phi|_{\infty}^{p} \int_{\B_{2r}(0)} \, dx \int_{\{y\in \R^{N}: |x-y|\leq r\}} \frac{|u(x)|^{p}}{|x-y|^{N+sp-p}} \, dy \nonumber \\
&\quad+ C \int_{\B_{2r}(0)} \, dx \int_{\{y\in \R^{N}: |x-y|> r\}} \frac{|u(x)|^{p}}{|x-y|^{N+sp}} \, dy \nonumber\\
&\leq C r^{-sp} \int_{\B_{2r}(0)} |u(x)|^{p} \, dx+C r^{-sp} \int_{\B_{2r}(0)} |u(x)|^{p} \, dx \nonumber \\
&=Cr^{-sp} \int_{\B_{2r}(0)} |u(x)|^{p} \, dx.
\end{align}
Regarding the last integral in \eqref{Pa1} we can note that
\begin{align}\label{Pa4}
&\iint_{X^{3}_{r}} |u(x)|^{p} \frac{|\phi_{r}(x)-\phi_{r}(y)|^{p}}{|x-y|^{N+sp}} \, dx dy \nonumber\\
&=\int_{\R^{N}\setminus \B_{2r}(0)} \, dx \int_{\{y\in \B_{2r}(0): |x-y|\leq r\}} |u(x)|^{p} \frac{|\phi_{r}(x)-\phi_{r}(y)|^{p}}{|x-y|^{N+sp}} \, dy \nonumber\\
&\quad+\int_{\R^{N}\setminus \B_{2r}(0)} \,dx \int_{\{y\in \B_{2r}(0): |x-y|>r\}} |u(x)|^{p} \frac{|\phi_{r}(x)-\phi_{r}(y)|^{p}}{|x-y|^{N+sp}} \, dy=: C_{r}+ D_{r}.
\end{align}
By the Mean Value Theorem, and observing that if $(x, y) \in (\R^{N}\setminus \B_{2r}(0))\times \B_{2r}(0)$ and $|x-y|\leq r$, then $|x|\leq 3r$, we get
\begin{align}\label{Pa5}
C_{r}&\leq r^{-p} |\nabla \phi|_{\infty}^{p} \int_{\B_{3r}(0)} \, dx \int_{\{y\in \B_{2r}(0): |x-y|\leq r\}} \frac{|u(x)|^{p}}{|x-y|^{N+sp-p}} \, dy \nonumber\\
&\leq C r^{-p}  \int_{\B_{3r}(0)} |u(x)|^{p} \, dx \int_{\{z\in \R^{N}: |z|\leq r\}} \frac{1}{|z|^{N+sp-p}} \, dz \nonumber\\
&=C r^{-sp} \int_{\B_{3r}(0)} |u(x)|^{p} \, dx.
\end{align}
Note that for any $K>4$ it holds
$$
X_{r}^{3}=(\R^{N}\setminus \B_{2r}(0))\times \B_{2r}(0) \subset (\B_{K r}(0) \times \B_{2r}(0)) \cup ((\R^{N}\setminus \B_{Kr}(0))\times \B_{2r}(0)).
$$
Then, we have the following estimates
\begin{align}\label{Pa6}
\int_{\B_{Kr}(0)} \, dx &\int_{\{y\in \B_{2r}(0): |x-y|> r\}}  |u(x)|^{p} \frac{|\phi_{r}(x)-\phi_{r}(y)|^{p}}{|x-y|^{N+sp}} \, dy \nonumber\\
&\leq C \int_{\B_{Kr}(0)} \, dx \int_{\{y\in \B_{2r}(0): |x-y|> r\}} \frac{|u(x)|^{p}}{|x-y|^{N+sp}} \, dy \nonumber \\
&\leq C \int_{\B_{Kr}(0)} |u(x)|^{p} \, dx \int_{\{z\in \R^{N}: |z|> r\}} \frac{1}{|z|^{N+sp}} \, dz \nonumber\\
&= C r^{-sp} \int_{\B_{Kr}(0)} |u(x)|^{p} \, dx.
\end{align}
Now, if $(x, y)\in (\R^{N}\setminus \B_{Kr}(0))\times \B_{r}(0)$, then $|x-y|\geq |x|- |y|\geq \frac{|x|}{2}+ \frac{K}{2}r -2r >\frac{|x|}{2}$, and using H\"older inequality we can see that
\begin{align}\label{Pa7}
&\int_{\R^{N}\setminus \B_{Kr}(0)} \, dx \int_{\{y\in \B_{2r}(0): |x-y|>r\}} |u(x)|^{p} \frac{|\phi_{r}(x)-\phi_{r}(y)|^{p}}{|x-y|^{N+sp}} \, dy \nonumber\\
&\quad \leq  C \int_{\R^{N}\setminus \B_{Kr}(0)} \, dx \int_{\{y\in \B_{2r}(0): |x-y|>r \}} \frac{|u(x)|^{p}}{|x-y|^{N+sp}} \, dy \nonumber\\
&\quad \leq C r^{N} \int_{\R^{N}\setminus \B_{Kr}(0)} \frac{|u(x)|^{p}}{|x|^{N+sp}} \, dx \nonumber\\
&\quad \leq C r^{N} \left(\int_{\R^{N}\setminus \B_{Kr}(0)} |u(x)|^{p^{*}_{s}} \, dx\right)^{\frac{p}{p^{*}_{s}}} \left(\int_{\R^{N}\setminus \B_{Kr}(0)} |x|^{-(N+sp)\frac{p^{*}_{s}}{p^{*}_{s}-p}} \, dx\right)^{\frac{p^{*}_{s}-p}{p^{*}_{s}}} \nonumber\\
&\quad \leq C K^{-N} \left(\int_{\R^{N}\setminus \B_{Kr}(0)} |u(x)|^{p^{*}_{s}} \, dx\right)^{\frac{p}{p^{*}_{s}}}.
\end{align}
Therefore, combining (\ref{Pa6}) and (\ref{Pa7}), we have
\begin{align}\label{Pa8}
D_{r}\leq C r^{-sp} \int_{\B_{Kr}(0)} |u(x)|^{p} \, dx+C K^{-N}.
\end{align}
Putting together (\ref{Pa1})-(\ref{Pa5}) and (\ref{Pa8}), we can infer
\begin{align*}
\iint_{\R^{2N}} |u(x)|^{p} \frac{|\phi_{r}(x)-\phi_{r}(y)|^{p}}{|x-y|^{N+sp}} \, dx dy \leq Cr^{-sp} \int_{\B_{Kr}(0)} |u(x)|^{p} \, dx+C K^{-N}
\end{align*}
from which we deduce that
\begin{align*}
& \limsup_{r\rightarrow \infty} \iint_{\R^{2N}} |u(x)|^{p} \frac{|\phi_{r}(x)-\phi_{r}(y)|^{p}}{|x-y|^{N+sp}} \, dx dy \nonumber\\
&\quad \quad =\lim_{K\rightarrow \infty}\limsup_{r\rightarrow \infty} \iint_{\R^{2N}} |u(x)|^{p} \frac{|\phi_{r}(x)-\phi_{r}(y)|^{p}}{|x-y|^{N+sp}} \, dx dy =0.
\end{align*}
\end{proof}

Now we prove the following useful result inspired by \cite{Alv, MeW}.
\begin{lemma}\label{lemVince}
Let $w\in \mathcal{D}^{s, p}(\R^{N})$ and $\{z_{n}\}\subset \mathcal{D}^{s, p}(\R^{N})$ be a sequence such that $z_{n}\rightarrow 0$ a.e. in $\R^{N}$ and $[z_{n}]_{s, p}\leq C$ for any $n\in \mathbb{N}$. Then we have
\begin{align*}
\iint_{\R^{2N}} |\A(z_{n} + w) - \A(z_{n}) - \A(w)|^{p'} dx= o_{n}(1),
\end{align*}
where ${\A(u)=\frac{|u(x)- u(y)|^{p-2}(u(x)- u(y))}{|x-y|^{\frac{N+sp}{p'}}}}$ and $p'= \frac{p}{p-1}$.
\end{lemma}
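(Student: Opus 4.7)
The plan is to adapt the classical Brezis--Lieb argument to the nonlocal setting by identifying each $\A(u)$ with $|a|^{p-2}a$ for an appropriate $a\in L^{p}(\R^{2N})$. Setting
\[
 a_n(x,y) := \frac{z_n(x)-z_n(y)}{|x-y|^{(N+sp)/p}}, \qquad b(x,y) := \frac{w(x)-w(y)}{|x-y|^{(N+sp)/p}},
\]
a direct computation using $(N+sp)(p-2)/p + (N+sp)/p = (N+sp)(p-1)/p = (N+sp)/p'$ shows that $\A(z_n+w) = |a_n+b|^{p-2}(a_n+b)$, $\A(z_n)=|a_n|^{p-2}a_n$ and $\A(w)=|b|^{p-2}b$. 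Thus the integrand equals
\[
 F_n(x,y) := \bigl| |a_n+b|^{p-2}(a_n+b) - |a_n|^{p-2}a_n - |b|^{p-2}b \bigr|^{p'},
\]
and the hypotheses translate into $\iint_{\R^{2N}} |a_n|^{p}\,dx\,dy = [z_n]_{s,p}^{p} \leq C^{p}$, $\iint_{\R^{2N}} |b|^{p}\,dx\,dy = [w]_{s,p}^{p} < \infty$, together with $a_n \to 0$ a.e.\ in $\R^{2N}$ (if $N_{0}\subset\R^{N}$ is a null set outside of which $z_n\to 0$, then $\{(x,y) : x\in N_{0}\text{ or } y\in N_{0}\}$ is a null set in $\R^{2N}$).

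The key analytic ingredient I would invoke is the following elementary scalar inequality: for every $\eta>0$ there exists $C_\eta>0$ such that
\[
 \bigl| |A+B|^{p-2}(A+B) - |A|^{p-2}A - |B|^{p-2}B \bigr|^{p'} \leq \eta\, |A|^{p} + C_\eta\, |B|^{p} \qquad \forall\, A,B\in\R.
\]
This is a standard homogeneity/compactness fact and is precisely the form used in \cite{Alv,MeW}: on the region $|B|\leq \delta |A|$ the left side is $O(\delta^{p'})|A|^{p}$ by continuity of $A\mapsto |A|^{p-2}A$, while on $|B|\geq \delta |A|$ the triangle inequality provides a bound of order $|B|^{p}$ with a constant depending on $\delta = \delta(\eta)$. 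Applied pointwise in $(x,y)$ with $A=a_n$, $B=b$, it yields $F_n \leq \eta\,|a_n|^{p} + C_\eta\,|b|^{p}$.

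Following the standard Brezis--Lieb $\eta$-truncation, I set $W_{n,\eta} := (F_n - \eta\,|a_n|^{p})^{+}$, so that $0 \leq W_{n,\eta} \leq C_\eta\,|b|^{p}$, which is in $L^{1}(\R^{2N})$ since $w\in\mathcal{D}^{s,p}(\R^{N})$. Because $a_n\to 0$ a.e.\ and the expression under the absolute value depends continuously on $A$ with value $0$ at $A=0$, one obtains $F_n \to 0$ a.e.\ in $\R^{2N}$ and hence $W_{n,\eta} \to 0$ a.e. Lebesgue's dominated convergence theorem then gives $\iint_{\R^{2N}} W_{n,\eta}\,dx\,dy \to 0$, and combined with
\[
 \iint_{\R^{2N}} F_n\,dx\,dy \leq \iint_{\R^{2N}} W_{n,\eta}\,dx\,dy + \eta\iint_{\R^{2N}} |a_n|^{p}\,dx\,dy \leq \iint_{\R^{2N}} W_{n,\eta}\,dx\,dy + \eta\, C^{p},
\]
this yields $\limsup_{n\to\infty} \iint_{\R^{2N}} F_n\,dx\,dy \leq \eta\, C^{p}$ for every $\eta>0$; letting $\eta \to 0^{+}$ concludes the argument.

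The main obstacle I foresee is the precise form of the scalar inequality, with the small factor $\eta$ asymmetrically placed on $|A|^{p}$ rather than $|B|^{p}$. This asymmetry is essential because $[z_n]_{s,p}$ is only uniformly bounded (not small), whereas $[w]_{s,p}$ is fixed, so the error proportional to $|b|^{p}$ can be absorbed via dominated convergence while the one proportional to $|a_n|^{p}$ can only be controlled by taking $\eta$ small. Once the scalar inequality is established, the remainder of the argument is a routine dominated-convergence manipulation in the product space $\R^{2N}$.
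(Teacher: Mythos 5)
Your proposal is correct and follows the same Brezis--Lieb strategy as the paper: translate the hypotheses into $a_n \to 0$ a.e.\ with $\{a_n\}$ bounded in $L^p(\R^{2N})$ and $b \in L^p(\R^{2N})$, split off an $\eta$-truncation, and apply dominated convergence. The only structural deviation is that you raise to the power $p'$ first and state a single scalar inequality
\[
\bigl| |A+B|^{p-2}(A+B) - |A|^{p-2}A - |B|^{p-2}B \bigr|^{p'} \leq \eta\,|A|^{p} + C_\eta\,|B|^{p}
\]
covering all $p>1$, whereas the paper works at the level of $|\cdot|$ (its estimate \eqref{abp-2} has $|a|^{p-1}$ and $|b|^{p-1}$ on the right, and the $p'$ power is taken at the end) and splits the argument into $p\geq 2$ and $1<p<2$. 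Your unified inequality is valid and follows from \eqref{abp-2} by raising to the $p'$ power and noting $(p-1)p'=p$; for $1<p<2$ it holds even with $\eta=0$ by the global $(p-1)$-H\"older bound $||A+B|^{p-2}(A+B)-|A|^{p-2}A|\leq C_p|B|^{p-1}$ (Lemma~3.1 of \cite{MeW}), which the paper invokes separately. One small imprecision in your sketch: in the region $|B|\leq\delta|A|$ with $1<p<2$ the phrase ``by continuity of $A\mapsto|A|^{p-2}A$'' is not quite enough---plain continuity gives no rate, and the mean value theorem fails near $A=0$---so you need the H\"older modulus from \cite{MeW} there; the resulting rate is $O(\delta^p)$ rather than $O(\delta^{p'})$, which is still enough. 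With that caveat, this is essentially the paper's proof with the two exponent ranges merged into a single estimate.
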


\begin{proof}
We first consider the case $p\geq 2$. In view of the Mean Value Theorem and Young's inequality, we can see that fixed $\e>0$ there exists $C_{\e}>0$ such that
\begin{equation}\label{abp-2}
||a+b|^{p-2}(a+b)-|a|^{p-2}a|\leq \e |a|^{p-1}+C_{\e}|b|^{p-1} \quad \mbox{ for all } a, b\in \R.
\end{equation}
Taking
$$
a= \frac{z_{n}(x)-z_{n}(y)}{|x-y|^{\frac{N+sp}{p}}}   \quad \mbox{ and } \quad b= \frac{w(x)-w(y)}{|x-y|^{\frac{N+sp}{p}}}
$$
in \eqref{abp-2}, we obtain
\begin{align*}
&\left|\frac{|(z_{n}+w)(x)- (z_{n}+w)(y)|^{p-2}((z_{n}+w)(x)- (z_{n}+w)(y))}{|x-y|^{\frac{N+sp}{p'}}}\right.\\
&\left. -  \frac{|z_{n}(x)- z_{n}(y)|^{p-2}(z_{n}(x)- z_{n}(y))}{|x-y|^{\frac{N+sp}{p'}}} \right| \\
\leq& \e \frac{|z_{n}(x)-z_{n}(y)|^{p-1}}{|x-y|^{\frac{N+sp}{p'}}}+ C_{\e} \frac{|w(x)-w(y)|^{p-1}}{|x-y|^{\frac{N+sp}{p'}}}.
\end{align*}
Let us define $H_{\e, n}:\R^{2N}\rightarrow \R_{+}$ as
\begin{align*}
H_{\e, n} (x, y):= \max \left\{ |\A(z_{n} + w) - \A(z_{n}) - \A(w)|- \e \frac{|z_{n}(x)-z_{n}(y)|^{p-1}}{|x-y|^{\frac{N+sp}{p'}}}, \, 0 \right\}.
\end{align*}
Then we have that $H_{\e, n} \rightarrow 0$ a.e. in $\R^{2N}$ as $n\rightarrow \infty$ and
\begin{align*}
0\leq H_{\e, n}(x, y) \leq C_{\e} \frac{|w(x)-w(y)|^{p-1}}{|x-y|^{\frac{N+sp}{p'}}}\in L^{p'}(\R^{2N}).
\end{align*}
The Dominated Convergence Theorem yields
\begin{align*}
\int_{\R^{2N}} |H_{\e, n} |^{p'} dxdy \rightarrow 0 \quad \mbox{ as } n\rightarrow \infty.
\end{align*}
From the definition of $H_{\e, n}$ we deduce that
\begin{align*}
|\A(z_{n} + w) -\A(z_{n}) - \A(w)|^{p'}\leq C\e \frac{|z_{n}(x)-z_{n}(y)|^{p}}{|x-y|^{N+sp}} + C (H_{\e, n})^{p'}.
\end{align*}
Therefore,
\begin{align*}
\limsup_{n\rightarrow \infty} \iint_{\R^{2N}} |\A(z_{n} + w) - \A(z_{n}) - \A(w)|^{p'} dxdy \leq C \limsup_{n\rightarrow \infty}  \e^{p'} [z_{n}]^{p}_{s, p} \leq C \e^{p'}.
\end{align*}
By the arbitrariness of $\e$ we get the thesis.

Now we deal with the case $1<p<2$.
Using Lemma $3.1$ in \cite{MeW}, we know that
$$
\sup_{c\in \R^{N}, d\neq 0} \left|\frac{|c+d|^{p-2}(c+d)-|c|^{p-2}c}{|d|^{p-1}}\right|<\infty,
$$
so, setting
$$
c=\frac{z_{n}(x)-z_{n}(y)}{|x-y|^{\frac{N+sp}{p}}} \quad \mbox{ and } \quad d=\frac{w(x)-w(y)}{|x-y|^{\frac{N+sp}{p}}},
$$
we can conclude the proof in view of the Dominated Convergence Theorem.
\end{proof}

Let us define the space
\begin{align*}
\X_{\e}:=\left \{u\in W^{s, p}(\R^{N})\cap W^{s, q}(\R^{N}) : \int_{\R^{N}} V(\e x) \left(|u|^{p}+|u|^{q}\right) \,dx<\infty \right \}
\end{align*}
endowed with the norm
\begin{align*}
\|u\|_{\e}= \|u\|_{V, p} + \|u\|_{V, q},
\end{align*}
where
\begin{align*}
\|u\|_{V, t}= \left([u]_{s, t}^{t} + \int_{\R^{N}} V(\e x) |u|^{t}\, dx\right)^{\frac{1}{t}} \quad \mbox{ for all } t>1.
\end{align*}

It is easy to check that the next result holds true.
\begin{lemma}\label{embedding}
The space $\X_{\e}$ is continuously embedded into $W^{s, p}(\R^{N})\cap W^{s, q}(\R^{N})$.
Therefore, $\X_{\e}$ is continuously embedded into $L^{t}(\R^{N})$ for any $t\in [p, \q]$ and compactly embedded into $L^{t}(\B_{R})$, for all $R>0$ and for any $t\in [1, \q)$.
\end{lemma}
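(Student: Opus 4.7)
The plan is to split the statement into two parts: first the continuous embedding $\X_{\e} \hookrightarrow W^{s,p}(\R^{N}) \cap W^{s,q}(\R^{N})$, and then the Lebesgue embeddings, which will follow from Step~1 combined with Theorem \ref{Sembedding}.

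For the first part, the key input is assumption \eqref{V0}, which yields the pointwise lower bound $V(\e x) \geq V_{0} > 0$ on all of $\R^{N}$. Using this, for $t \in \{p,q\}$ one immediately obtains $|u|_{t}^{t} \leq V_{0}^{-1}\int_{\R^{N}} V(\e x)|u|^{t}\,dx \leq V_{0}^{-1}\|u\|_{V,t}^{t}$. Combined with the trivial bound $[u]_{s,t}^{t} \leq \|u\|_{V,t}^{t}$, this gives $\|u\|_{s,t} \leq C\|u\|_{V,t}$ for $t \in \{p,q\}$, and summing over $t=p,q$ produces $\|u\|_{s,p}+\|u\|_{s,q} \leq C\|u\|_{\e}$, which is exactly the first claim.

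For the Lebesgue embeddings I would invoke Theorem \ref{Sembedding}. Its continuous part yields $W^{s,p}(\R^{N}) \hookrightarrow L^{t}(\R^{N})$ for $t\in[p,\p]$ and $W^{s,q}(\R^{N}) \hookrightarrow L^{t}(\R^{N})$ for $t\in[q,\q]$; if $\p \geq q$ these two intervals already cover $[p,\q]$, and otherwise the remaining gap $(\p, q)$ is filled by H\"older interpolation between $L^{p}(\R^{N})$ and $L^{\q}(\R^{N})$, both of which are controlled by $\|\cdot\|_{\e}$ via Step~1. For the compact part, Theorem \ref{Sembedding} applied to $W^{s,q}(\R^{N})$ gives a compact embedding into $L^{t}(\B_{R})$ for every $t \in [1, \q)$ and every $R>0$, so composition with the continuous embedding of Step~1 yields at once the desired compactness of $\X_{\e}$ into $L^{t}(\B_{R})$ for every $t \in [1, \q)$.

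There is no serious obstacle here; the authors themselves label the result as routine. The only minor point requiring care is that the single Sobolev exponent $\p$ may or may not exceed $q$, so the full range $[p,\q]$ is not covered by any single embedding in Theorem \ref{Sembedding} and must be patched together, either by a brief case analysis or uniformly via H\"older interpolation between the extremes $L^{p}$ and $L^{\q}$.
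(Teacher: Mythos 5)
Your argument is correct, and since the paper explicitly leaves this lemma unproved (``It is easy to check that the next result holds true''), there is no paper proof to compare against; yours is simply the standard one the authors have in mind. You also correctly flag the one subtle point---that a single application of Theorem \ref{Sembedding} does not cover $[p,\q]$ when $\p<q$---and close it cleanly by interpolating between $L^{p}(\R^{N})$ and $L^{\q}(\R^{N})$, both of which your Step~1 controls.
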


\noindent
\begin{lemma}\label{lem6}
If $V_{\infty}=\infty$, the embedding $\X_{\e}\subset L^{m}(\R^{N})$ is compact for any $p\leq m<\q$.
\end{lemma}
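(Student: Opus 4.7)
The plan is to fix a bounded sequence $\{u_n\} \subset \X_\e$, extract a weakly convergent subsequence $u_n \rightharpoonup u$ in $\X_\e$, and then upgrade the convergence to strong convergence in $L^m(\R^N)$ for every $m \in [p, \q)$. The two ingredients to combine are a tightness estimate at infinity coming from $V_\infty = \infty$, and the local compact embedding already provided by Lemma \ref{embedding}. I first settle the endpoint $m = p$ and then bootstrap to the other exponents via interpolation with $L^{\q}$.

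For tightness, I would set $K(R) := \inf_{|y| \geq \e R} V(y)$ and observe that $V_\infty = \infty$ gives $K(R) \to \infty$ as $R \to \infty$. For any $v \in \X_\e$ and any $R > 0$,
\begin{equation*}
\int_{|x| \geq R} |v|^p \, dx \leq \frac{1}{K(R)} \int_{|x| \geq R} V(\e x) |v|^p \, dx \leq \frac{\|v\|_\e^p}{K(R)}.
\end{equation*}
Applying this to $u_n - u$ (which is bounded in $\X_\e$ by weak lower semicontinuity of the norm), I can make the tail contribution $\int_{|x| \geq R} |u_n - u|^p \, dx$ uniformly small by choosing $R$ large.

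For the bounded region, Lemma \ref{embedding} tells me that the embedding $\X_\e \hookrightarrow L^p(\B_R(0))$ is compact (since $p < \q$), so $u_n \to u$ strongly in $L^p(\B_R(0))$ for every fixed $R$. Splitting $\R^N = \B_R(0) \cup \B_R(0)^c$ and combining the two bounds yields $u_n \to u$ in $L^p(\R^N)$. For a general $m \in (p, \q)$, write $\frac{1}{m} = \frac{\theta}{p} + \frac{1-\theta}{\q}$ with $\theta \in (0, 1)$; then Hölder's inequality gives
\begin{equation*}
|u_n - u|_m \leq |u_n - u|_p^{\theta} \, |u_n - u|_{\q}^{1-\theta}.
\end{equation*}
Since $\{u_n - u\}$ is bounded in $L^{\q}(\R^N)$ by Theorem \ref{Sembedding} applied with $t = q$ (via $\X_\e \hookrightarrow W^{s,q}(\R^N)$), and $|u_n - u|_p \to 0$, I conclude $|u_n - u|_m \to 0$.

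The only delicate point is justifying that the diverging-potential argument really gives \emph{uniform} tail decay; this is purely a consequence of the boundedness $\|u_n\|_\e \leq C$ together with $K(R) \to \infty$, so I do not expect any serious obstacle. The proof is therefore essentially a weighted-space analogue of the classical Rellich-type compactness argument.
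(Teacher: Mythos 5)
Your proof is correct, and it is actually more self-contained and careful than the paper's. The paper's one-line argument introduces the auxiliary space $\mathbb{Y}_{\e}= \{u\in W^{s, q}(\R^{N}) : \int_{\R^{N}} V(\e x) |u|^{q}dx <\infty\}$, cites (without proof) the compact embedding $\mathbb{Y}_{\e}\hookrightarrow L^{t}(\R^{N})$ for $t\in (q,\q)$, and then claims the result for $p\leq m<\q$ ``by interpolation.'' That interpolation, $|v|_m\leq |v|_p^{\theta}|v|_t^{1-\theta}$ with $t>q$, handles $m\in(p,\q)$ (boundedness in $L^p$ plus strong convergence in $L^t$), but it degenerates at the endpoint $m=p$ since there $\theta=1$; to get compactness in $L^p$ one really does need the $L^p$-tightness coming from the weighted norm, which is exactly what you prove. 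Your route---a uniform tail estimate $\int_{|x|\geq R}|v|^p\,dx\leq \|v\|_{\e}^p/K(R)$ with $K(R)\to\infty$, local Rellich-type compactness from Lemma \ref{embedding} on $\B_R(0)$, then interpolation upward against the $L^{\q}$ bound from Theorem \ref{Sembedding}---is the standard Bartsch--Wang-style argument made explicit, and it closes the gap at $m=p$ that the paper's phrasing glosses over. The only small economy you could gain by following the paper more closely is to avoid rederiving the tightness by hand and instead invoke the known compact embedding for the $q$-weighted space; but then you would still owe the reader the $m=p$ endpoint, so your version is preferable.
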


\begin{proof}
The space
\begin{align*}
\mathbb{Y}_{\e}= \left\{u\in W^{s, q}(\R^{N}) : \int_{\R^{N}} V(\e x) |u|^{q}dx <\infty\right\}
\end{align*}
endowed with the norm
\begin{align*}
\|u\|^{q}_{\mathbb{Y}_{\e}}= [u]_{s, q}^{q}+ \int_{\R^{N}} V(\e x) |u|^{q}dx
\end{align*}
is compactly embedded in $L^{t}(\R^{N})$ for any $t\in (q, \q)$. Moreover, the space $\X_{\e}$ is continuously embedded in $\mathbb{Y}_{\e}$, therefore, by interpolation, the embedding $\X_{\e}\subset L^{m}(\R^{N})$ is compact for any $p\leq m<\q$.
\end{proof}

The next two results are technical lemmas which will be very useful in this work; their proofs are obtained following the arguments developed by Brezis and Lieb in \cite{BL}.
\begin{lemma}\label{lemPSY}
If $\{u_{n}\}$ is a bounded sequence in $\X_{\e}$, then
\begin{align*}
&[u_{n}-u]^{p}_{s, p}+ [u_{n}-u]^{q}_{s, q}+ \int_{\R^{N}} V(\e x) (|u_{n}-u|^{p} + |u_{n}-u|^{q}) \, dx \\
&\quad = \left( [u_{n}]^{p}_{s, p} + [u_{n}]^{q}_{s, q} + \int_{\R^{N}} V(\e x) (|u_{n}|^{p}+ |u_{n}|^{q}) \, dx\right) \\
&\quad \quad- \left( [u]^{p}_{s, p}+ [u]^{q}_{s, q} + \int_{\R^{N}} V(\e x) (|u|^{p}+ |u|^{q}) \, dx \right)+o_{n}(1).
\end{align*}
\end{lemma}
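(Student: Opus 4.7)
The plan is to recognize Lemma~\ref{lemPSY} as a direct application of the classical Brezis--Lieb lemma \cite{BL} to four separate $L^{t}$-type integrals, and then to sum the resulting identities. I would organize the argument in three stages.

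\smallskip
\textbf{Stage 1: passing to an a.e.\ convergent subsequence.} Since $\{u_{n}\}$ is bounded in $\X_{\e}$, which is reflexive, up to a subsequence $u_{n}\rightharpoonup u$ in $\X_{\e}$ (after renaming the weak limit, which we may call $u$ by assumption). By Lemma~\ref{embedding}, the embedding of $\X_{\e}$ into $L^{t}_{\mathrm{loc}}(\R^{N})$ is compact for $t\in [1,\q)$, hence $u_{n}\to u$ in $L^{t}_{\mathrm{loc}}(\R^{N})$ for each such $t$; a diagonal subsequence gives $u_{n}\to u$ a.e.\ in $\R^{N}$. In particular, by Fubini, $(x,y)\mapsto u_{n}(x)-u_{n}(y)$ converges to $u(x)-u(y)$ a.e.\ on $\R^{2N}$.

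\smallskip
\textbf{Stage 2: applying Brezis--Lieb to each of the four terms.} Recall the Brezis--Lieb lemma: if $\{f_{n}\}$ is bounded in $L^{r}(\Omega,d\mu)$ with $1\le r<\infty$ and $f_{n}\to f$ $\mu$-a.e., then
\begin{equation*}
\int_{\Omega}|f_{n}|^{r}d\mu-\int_{\Omega}|f_{n}-f|^{r}d\mu\;\longrightarrow\;\int_{\Omega}|f|^{r}d\mu.
\end{equation*}
For the potential terms with $t\in\{p,q\}$, take $\Omega=\R^{N}$, $d\mu=V(\e x)\,dx$, $r=t$ and $f_{n}=u_{n}$. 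Boundedness of $\int V(\e x)|u_{n}|^{t}\,dx$ follows from $\|u_{n}\|_{\e}\le C$, and a.e.\ convergence was established in Stage~1. Hence
\begin{equation*}
\int_{\R^{N}}V(\e x)|u_{n}|^{t}\,dx-\int_{\R^{N}}V(\e x)|u_{n}-u|^{t}\,dx=\int_{\R^{N}}V(\e x)|u|^{t}\,dx+o_{n}(1).
\end{equation*}

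For the Gagliardo seminorms, I rewrite $[v]_{s,t}^{t}=\|F_{v}\|_{L^{t}(\R^{2N})}^{t}$ with
\begin{equation*}
F_{v}(x,y):=\frac{v(x)-v(y)}{|x-y|^{(N+st)/t}},\qquad (x,y)\in \R^{2N},
\end{equation*}
and apply Brezis--Lieb on $\R^{2N}$ with Lebesgue measure, $r=t$, and $f_{n}=F_{u_{n}}$.

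\smallskip
\textbf{Stage 3: verifying the hypotheses for the Gagliardo part.} This is the only step requiring care. First, $\{F_{u_{n}}\}$ is bounded in $L^{t}(\R^{2N})$ because $\|F_{u_{n}}\|_{L^{t}(\R^{2N})}^{t}=[u_{n}]_{s,t}^{t}\le\|u_{n}\|_{\e}^{t}\le C$. Second, by Stage~1 we have $u_{n}(x)-u_{n}(y)\to u(x)-u(y)$ a.e.\ on $\R^{2N}$, and dividing by the nonzero weight $|x-y|^{(N+st)/t}$ yields $F_{u_{n}}\to F_{u}$ a.e.\ on $\R^{2N}$. Brezis--Lieb then gives
\begin{equation*}
[u_{n}]_{s,t}^{t}-[u_{n}-u]_{s,t}^{t}=[u]_{s,t}^{t}+o_{n}(1)\qquad\text{for }t\in\{p,q\}.
\end{equation*}

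\smallskip
Adding the four identities produced in Stages~2 and~3 (two for $t=p$, two for $t=q$) and rearranging gives the claimed splitting with error $o_{n}(1)$. The main obstacle I anticipate is merely the bookkeeping to justify the $L^{t}(\R^{2N})$-version of Brezis--Lieb for the double integral; the a.e.\ convergence on $\R^{2N}$ inherited from a.e.\ convergence on $\R^{N}$ (via Fubini on the full-measure product set) makes this routine.
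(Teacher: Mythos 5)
Your proposal matches the paper's own proof: both rewrite each Gagliardo seminorm as an $L^{t}(\R^{2N})$ norm of $(u_{n}(x)-u_{n}(y))/|x-y|^{(N+st)/t}$ and apply the Brezis--Lieb lemma \cite{BL} to the four terms (two potential, two seminorm), then sum. You are slightly more explicit about obtaining a.e.\ convergence on $\R^{2N}$ via the local compact embedding and Fubini, which the paper leaves implicit, but the decomposition and the key lemma are identical.
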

\begin{proof}
From the Brezis-Lieb Lemma \cite{BL} we know that if $t\in (1, \infty)$ and $\{g_{n}\}\subset L^{t}(\R^{k})$ is a bounded sequence such that $g_{n}\rightarrow g$ a.e. in $\R^{k}$, then we have
\begin{equation}\label{grk}
|g_{n}-g|_{L^{t}(\R^{k})}^{t}= |g_{n}|_{L^{t}(\R^{k})}^{t} - |g|_{L^{t}(\R^{k})}^{t} +o_{n}(1).
\end{equation}
Therefore
\begin{align*}
\int_{\R^{N}} V(\e x) |u_{n}-u|^{p}\, dx= \int_{\R^{N}} V(\e x) |u_{n}|^{p} \, dx- \int_{\R^{N}} V(\e x) |u|^{p}\, dx+ o_{n}(1).
\end{align*}
Taking
\begin{equation*}
g_{n}=\frac{u_{n}(x)-u_{n}(y)}{|x-y|^{\frac{N+sp}{p}}}, \quad  g= \frac{u(x)-u(y)}{|x-y|^{\frac{N+sp}{p}}}, \quad t=p \, \mbox{ and } \, k=2N
\end{equation*}
in \eqref{grk} we obtain
\begin{align*}
[u_{n}-u]^{p}_{s, p}= [u_{n}]^{p}_{s, p}- [u]^{p}_{s, p} + o_{n}(1).
\end{align*}
In similar fashion we can see that
\begin{align*}
[u_{n}-u]^{q}_{s, q}= [u_{n}]^{q}_{s, q}- [u]^{q}_{s, q} + o_{n}(1)
\end{align*}
and
\begin{align*}
\int_{\R^{N}} V(\e x) |u_{n}-u|^{q}\, dx= \int_{\R^{N}} V(\e x) |u_{n}|^{q}\, dx- \int_{\R^{N}} V(\e x) |u|^{q}\, dx+ o_{n}(1).
\end{align*}
This ends the proof of lemma.
\end{proof}

\begin{lemma}\label{lem7}
Let $\{u_{n}\}\subset \X_{\e}$ be a sequence such that $u_{n}\rightharpoonup u$ in $\X_{\e}$. Set $v_{n}=u_{n}-u$.  Then we have
\begin{compactenum}[$(i)$]
\item $\displaystyle{[v_{n}]_{s, p}^{p}+[v_{n}]_{s, q}^{q}= \left([u_{n}]_{s, p}^{p}+[u_{n}]_{s, q}^{q}\right) - \left([u]_{s, p}^{p}+[u]_{s, q}^{q}\right)+o_{n}(1)}$,
\item $\displaystyle\int_{\R^{N}} V(\e x) \left(|v_{n}|^{p}+ |v_{n}|^{q}\right) \, dx$
\item[] $= \int_{\R^{N}} V(\e x) \left(|u_{n}|^{p}+ |u_{n}|^{q}\right) \, dx- \int_{\R^{N}} V(\e x) \left(|u|^{p}+ |u|^{q}\right) \, dx +o_{n}(1)$,
\item $\displaystyle{\int_{\R^{N}} \left(F(v_{n})- F(u_{n})+ F(u)\right) \, dx =o_{n}(1)}$,
\item $\displaystyle{\sup_{\|w\|_{\e}\leq 1} \int_{\R^{N}} |\left(f(v_{n}) - f(u_{n})+ f(u)\right) w|\, dx = o_{n}(1)}$.
\end{compactenum}
\end{lemma}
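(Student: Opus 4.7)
Since $u_n \rightharpoonup u$ in $\X_\e$, the compact embedding $\X_\e \hookrightarrow L^t_{\mathrm{loc}}(\R^N)$ from Lemma \ref{embedding} allows me to extract a subsequence (still denoted $\{u_n\}$) with $u_n \to u$ a.e.\ in $\R^N$; then $v_n := u_n - u \to 0$ a.e., and both $\{u_n\}$, $\{v_n\}$ are bounded in $L^t(\R^N)$ for every $t \in [p, \q]$ by Lemma \ref{embedding}. Parts (i) and (ii) are immediate consequences of the classical Brezis--Lieb identity \eqref{grk}: apply it to $(u_n(x)-u_n(y))/|x-y|^{(N+st)/t} \in L^t(\R^{2N})$ for $t \in \{p,q\}$ to recover the Gagliardo-seminorm decompositions, and to $V(\e x)^{1/t} u_n \in L^t(\R^N)$ for $t \in \{p,q\}$ to recover the weighted $L^t$ decompositions, exactly as in the proof of Lemma \ref{lemPSY}.

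For parts (iii) and (iv), the key technical input is the following \emph{asymmetric} pointwise estimate: for every $\delta>0$ there exists $C_\delta>0$ such that
\begin{align*}
|F(a+b) - F(a) - F(b)| &\leq \delta(|a|^p + |a|^r) + C_\delta(|b|^p + |b|^r), \\
|f(a+b) - f(a) - f(b)| &\leq \delta(|a|^{p-1} + |a|^{r-1}) + C_\delta(|b|^{p-1} + |b|^{r-1}),
\end{align*}
for all $a,b \in \R$. These follow from the growth bound $|f(t)| \leq \delta|t|^{p-1} + C_\delta|t|^{r-1}$ (a direct consequence of $(f_1)$--$(f_3)$), the representations $F(a+b) - F(a) = \int_0^1 f(a+tb)\,b\,dt$ and $f(a+b) - f(a) = \int_0^1 f'(a+tb)\,b\,dt$ (using $f \in \C^1$), the elementary inequality $|a+tb|^{s-1} \leq C_s(|a|^{s-1} + |b|^{s-1})$, and Young's inequality $|a|^{s-1}|b| \leq \eta|a|^s + C_\eta|b|^s$, with $\eta$ chosen sufficiently small relative to $\delta$ to balance the terms.

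Granting these, I carry out the standard Brezis--Lieb truncation with $a = v_n$ and $b = u$. For (iii), define
\[
\mathcal{W}_n^\delta := \max\bigl\{|F(u_n) - F(v_n) - F(u)| - \delta(|v_n|^p + |v_n|^r),\, 0\bigr\}.
\]
By continuity of $F$ and $v_n \to 0$ a.e., $\mathcal{W}_n^\delta \to 0$ a.e., while the asymmetric estimate yields $0 \leq \mathcal{W}_n^\delta \leq C_\delta(|u|^p + |u|^r) \in L^1(\R^N)$. The Dominated Convergence Theorem gives $\int_{\R^N}\mathcal{W}_n^\delta\, dx \to 0$, hence
\[
\limsup_{n\to\infty} \int_{\R^N}|F(u_n) - F(v_n) - F(u)|\, dx \leq \delta\sup_n \int_{\R^N}(|v_n|^p + |v_n|^r)\, dx \leq C\delta,
\]
and letting $\delta \to 0$ concludes (iii). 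For (iv), the parallel $\mathcal{W}_n^\delta := \max\{|f(u_n) - f(v_n) - f(u)| - \delta(|v_n|^{p-1} + |v_n|^{r-1}),\, 0\}$ satisfies $\mathcal{W}_n^\delta \to 0$ a.e.\ and $\mathcal{W}_n^\delta \leq C_\delta(|u|^{p-1} + |u|^{r-1})$. Decomposing $\mathcal{W}_n^\delta = \min(\mathcal{W}_n^\delta, C_\delta|u|^{p-1}) + (\mathcal{W}_n^\delta - \min(\cdot,\cdot))$, the first piece is dominated by $C_\delta|u|^{p-1} \in L^{p'}$ and the second by $C_\delta|u|^{r-1} \in L^{r'}$, so DCT in each space gives the $L^{p'}$- and $L^{r'}$-norm convergence of these pieces to $0$. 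Combined with H\"older's inequality and the continuous embeddings $|w|_p,\, |w|_r \leq C\|w\|_\e$ from Lemma \ref{embedding}, this bounds the left-hand side of (iv) by $o_n(1) + \delta C$ uniformly in $w$ with $\|w\|_\e \leq 1$; letting $\delta \to 0$ finishes (iv).

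The main obstacle is the asymmetric pointwise Brezis--Lieb estimate itself: a direct triangle-inequality bound only produces a symmetric version with $C_\delta$ multiplying the $|a|$-terms, leaving an irreducible residual $C_\delta\int(|v_n|^{p-1}+|v_n|^{r-1})|w|\, dx$ that stays bounded but does \emph{not} vanish as $n \to \infty$. Securing the asymmetric form, in which $\delta$ multiplies the unknown-size slot (into which $v_n$ is substituted) and $C_\delta$ multiplies only the fixed slot (into which $u \in L^p \cap L^r$ is substituted), is precisely what allows the Dominated Convergence truncation to conclude.
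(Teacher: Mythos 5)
Parts $(i)$--$(iii)$ are correct and essentially reproduce the paper's argument: Brezis--Lieb for the seminorm and weighted $L^t$ decompositions, and for $(iii)$ a Young-balanced truncation driven by the integral representation $F(a+b)-F(a)=\int_0^1 f(a+tb)\,b\,dt$ and the growth bound $|f(t)|\leq\xi|t|^{p-1}+C_\xi|t|^{r-1}$; this works precisely because only $f$, not $f'$, enters. Your organizing idea for $(iv)$ --- isolating the asymmetric pointwise estimate $|f(a+b)-f(a)-f(b)|\leq\delta(|a|^{p-1}+|a|^{r-1})+C_\delta(|b|^{p-1}+|b|^{r-1})$ and then running a single Dominated Convergence truncation with the $L^{p'}+L^{r'}$ splitting and H\"older's inequality --- is a cleaner packaging than the paper's direct case analysis inside the integrals, and the DCT/H\"older part is carried out correctly once that pointwise estimate is available.

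The gap is in the claimed proof of the pointwise estimate for $f$. You derive it from $f(a+b)-f(a)=\int_0^1 f'(a+tb)\,b\,dt$ together with power growth of the integrand, but the hypotheses $(f_1)$--$(f_5)$ give no upper control on $|f'|$: the growth conditions $(f_2)$, $(f_3)$ constrain $f$, and $(f_5)$ only yields the one-sided inequality $f'(t)t\geq(q-1)f(t)$ for $t>0$. A $\C^1$ function satisfying $(f_1)$--$(f_5)$ may have $f'$ arbitrarily large on short intervals, so the step $|f(a+b)-f(a)|\leq|b|\sup_t|f'(a+tb)|\leq\delta(\ldots)+C_\delta(\ldots)$ is not justified. (This is exactly why the paper's own proof of $(iv)$ never touches $f'$.) The estimate you want is in fact true, but its proof must avoid $f'$: split into cases on the relative sizes of $|a|$ and $|b|$, using $(f_2)$ when both are small, $(f_3)$ when $|a|$ and hence $|a+b|$ are large with $|b|$ dominated by $|a|$, local boundedness and uniform continuity of $f$ on compact intervals in the intermediate range, and the triangle inequality plus the growth bound when $|b|\gtrsim|a|$. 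This is precisely the content --- reformulated as a pointwise inequality --- of what the paper does inside the integrals with the sets $A_n$, $B_n$, $C_n$, $D_n$ and the thresholds $r_0$, $r_1$, $\delta$. Either establish your pointwise inequality by that case analysis, or revert to the paper's integral-level splitting; as written, part $(iv)$ is not fully proved.
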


\begin{proof}
Let us note that the proofs of $(i)$ and $(ii)$ follow by Lemma \ref{lemPSY}.

Now we prove $(iii)$. Let us note that
\begin{align}\label{L1}
F(v_{n})-F(u_{n})=\int_{0}^{1} \frac{d}{dt} F(u_{n}-tu)dt=-\int_{0}^{1} u f(u_{n}-tu)dt,
\end{align}
where $v_{n}=u_{n}-u$.
Then, combining \eqref{L1} and assumptions $(f_{2})$ and $(f_{3})$, we can see that fixed $\delta>0$ there exists $C_{\delta}>0$ such that
$$
|F(v_{n})-F(u_{n})|\leq \delta |u_{n}|^{p-1}|u|+\delta |u|^{p}+C_{\delta}|u_{n}|^{r-1}|u|+C_{\delta}|u|^{r}.
$$
Applying Young's inequality with $\eta>0$, we can deduce that
$$
|F(v_{n})-F(u_{n})|\leq \eta (|u_{n}|^{p}+|u_{n}|^{r})+C_{\eta} (|u|^{p}+|u|^{r})
$$
which implies that
$$
|F(v_{n})-F(u_{n})+F(u)|\leq \eta (|u_{n}|^{p}+|u_{n}|^{r})+C_{\eta} (|u|^{p}+|u|^{r}).
$$
Let
$$
G_{\eta, n}(x)=\max\left\{|F(v_{n})-F(u_{n})+F(u)|-\eta (|u_{n}|^{p}+|u_{n}|^{r}), \, 0\right\}.
$$
Then $G_{\eta, n}\rightarrow 0$ a.e. in $\R^{N}$ as $n\rightarrow \infty$, and $0\leq G_{\eta, n}\leq C_{\eta} (|u|^{p}+|u|^{r})\in L^{1}(\R^{N})$. As a consequence of the Dominated Convergence Theorem, we get
$$
\int_{\R^{N}} G_{\eta, n}(x) \,dx\rightarrow 0 \quad \mbox{ as } n\rightarrow \infty.
$$
On the other hand, from the definition of $G_{\eta, n}$, it follows that
$$
|F(v_{n})-F(u_{n})+F(u)|\leq \eta (|u_{n}|^{p}+|u_{n}|^{r})+C|G_{\eta, n}|
$$
which together with the boundedness of $\{u_{n}\}$ in $L^{p}(\R^{N})\cap L^{r}(\R^{N})$ yields
$$
\limsup_{n\rightarrow \infty} \int_{\R^{N}} |F(v_{n})-F(u_{n})+F(u)| dx\leq C\eta.
$$
From the arbitrariness of $\eta$ we can deduce that $(iii)$ holds true.

Finally, we give the proof of $(iv)$.
For any fixed $\eta>0$, by $(f_{2})$ we can choose $r_{0}=r_{0}(\eta)\in(0, 1)$ such that
\begin{equation}\label{ZZ1}
|f(t)|\leq \eta |t|^{p-1} \quad \mbox{ for } |t|\leq 2r_{0}.
\end{equation}
On the other hand, by $(f_{3})$ we can pick $r_{1}= r_{1}(\eta)>2$ verifying
\begin{equation}\label{ZZ2}
|f(t)|\leq \eta |t|^{\q -1} \quad \mbox{ for } |t|\geq r_{1}-1.
\end{equation}
From the continuity of $f$, there exists $\delta= \delta(\eta)\in (0, r_{0})$ satisfying
\begin{align}\label{ZZ3}
|f(t_{1})- f(t_{2})|\leq r_{0}^{p-1}\eta \quad \mbox{ for } |t_{1}-t_{2}|\leq \delta, \, |t_{1}|, |t_{2}|\leq r_{1}+1.
\end{align}
Moreover, by $(f_{3})$ there exists a positive constant $c=c(\eta)$ such that
\begin{equation}\label{ZZ4}
|f(t)|\leq c(\eta) |t|^{p-1} + \eta |t|^{\q-1} \quad \mbox{ for } t\in \R.
\end{equation}
In what follows we estimate the following term:
$$
\int_{\R^{N}\setminus \B_{R}(0)} |f(u_{n}-u)- f(u_{n})-f(u)||w|\, dx.
$$
Using \eqref{ZZ4} and $u\in L^{p}(\R^{N})\cap L^{\q}(\R^{N})$ we can find $R=R(\eta)>0$ such that
\begin{align*}
\int_{\R^{N}\setminus \B_{R}(0)} |f(u) w|\, dx\leq& c\left(\int_{\R^{N}\setminus \B_{R}(0)} |u|^{\q}\, dx \right)^{\frac{\q-1}{\q}} |w|_{\q}\\
&+ c \left(\int_{\R^{N}\setminus \B_{R}(0)} |u|^{p}\, dx \right)^{\frac{p-1}{p}} |w|_{p}\\
\leq& c \eta \|w\|_{s, q} + c\eta \|w\|_{s, p}\leq c\eta \|w\|_{\e}.
\end{align*}
Set $A_{n}:=\{x\in \R^{N}\setminus \B_{R}(0) : |u_{n}(x)|\leq r_{0}\}$. In view of \eqref{ZZ1} and applying H\"older inequality we get
\begin{align}\label{ZZ5}
\int_{A_{n}\cap \{|u|\leq \delta\}} |f(u_{n})- f(u_{n}-u)||w|\, dx \leq \eta (|u_{n}|_{p}^{p-1} + |u_{n}-u|_{p}^{p-1})|w|_{p} \leq c \eta \|w\|_{\e}.
\end{align}
Let $B_{n}:= \{x\in \R^{N} \setminus \B_{R}(0) : |u_{n}(x)|\geq r_{1}\}$. Then \eqref{ZZ2} and H\"older inequality yield
\begin{align}\label{ZZ6}
\int_{B_{n}\cap \{|u|\leq \delta\}} |f(u_{n})\!-\! f(u_{n}\!-\!u)| |w| dx \leq \eta (|u_{n}|_{\q}^{\q-1} \!+\! |u_{n}\!-\!u|_{\q}^{\q-1}) |w|_{\q} \leq c\eta \|w\|_{\e}.
\end{align}
Finally, define $C_{n}:= \{x\in \R^{N} \setminus \B_{R}(0) : r_{0}\leq |u_{n}(x)|\leq r_{1}\}$. Since $u_{n}\in W^{s, p}(\R^{N})$ we know that $|C_{n}|<\infty$. Then \eqref{ZZ3} gives
\begin{align}\label{ZZ7}
\int_{C_{n}\cap \{|u|\leq \delta\}} |f(u_{n})- f(u_{n}-u)| |w| \, dx \leq r_{0}^{p-1}\eta |w|_{p} |C_{n}|^{\frac{p-1}{p}}\leq \eta |u_{n}|_{p} |w|_{p} \leq c \eta \|w\|_{\e}.
\end{align}
Putting together \eqref{ZZ5}, \eqref{ZZ6} and \eqref{ZZ7} we have that
\begin{align}\label{ZZ8}
\int_{(\R^{N}\setminus \B_{R}(0))\cap \{|u|\leq \delta\}} |f(u_{n})- f(u_{n}-u)| |w| \, dx \leq c \eta \|w\|_{\e} \quad \mbox{ for all } n\in \mathbb{N}.
\end{align}
Now, we note that \eqref{ZZ4} implies
\begin{align*}
|f(u_{n})- f(u_{n}-u)|\leq \eta (|u_{n}|^{\q-1} + |u_{n}-u|^{\q-1}) + c(\eta) (|u_{n}|^{p-1}+ |u_{n}-u|^{p-1}),
\end{align*}
so we can see that
\begin{align*}
&\int_{(\R^{N}\setminus \B_{R}(0)) \cap \{|u|\geq \delta\}} |f(u_{n})- f(u_{n}-u)||w|\, dx \\
&\leq \int_{(\R^{N}\setminus \B_{R}(0)) \cap \{|u|\geq \delta\}} \left[ \eta (|u_{n}|^{\q-1} + |u_{n}-u|^{\q-1}) |w|\right.\\
&\left.\hspace{3.6cm}+ c(\eta) (|u_{n}|^{p-1}+ |u_{n}-u|^{p-1})|w| \right]\, dx\\
&\leq c\eta \|w\|_{\e} +  \int_{(\R^{N}\setminus \B_{R}(0)) \cap \{|u|\geq \delta\}} c(\eta) (|u_{n}|^{p-1}+ |u_{n}-u|^{p-1})|w|\, dx.
\end{align*}
Since $u\in W^{s, p}(\R^{N})$, we get $|(\R^{N}\setminus \B_{R}(0)) \cap \{|u|\geq \delta\}|\ri 0$ as $R\ri \infty$. Then choosing $R=R(\eta)$ large enough we can infer
\begin{align*}
&\int_{(\R^{N}\setminus \B_{R}(0)) \cap \{|u|\geq \delta\}} c(\eta) (|u_{n}|^{p-1}+ |u_{n}-u|^{p-1})|w|\, dx \\
&\quad \leq c(\eta) (|u_{n}|_{\q}^{p-1}+ |u_{n}-u|_{\q}^{p-1}) \,|w|_{\q} \, |(\R^{N}\setminus \B_{R}(0)) \cap \{|u|\geq \delta\}|^{\frac{\q-p}{p}}\leq \eta \|w\|_{\e},
\end{align*}
where we used the generalized H\"older inequality. Therefore
\begin{align*}
\int_{(\R^{N}\setminus \B_{R}(0)) \cap \{|u|\geq \delta\}} |f(u_{n})- f(u_{n}-u)||w|\, dx \leq c \eta \|w\|_{\e} \quad \mbox{ for all } n\in \mathbb{N},
\end{align*}
which combined with \eqref{ZZ8} yields
\begin{align}\label{ZZ17}
\int_{\R^{N}\setminus \B_{R}(0)} |f(u_{n})-f(u)- f(u_{n}-u)||w|\, dx \leq c \eta \|w\|_{\e} \quad \mbox{ for all } n\in \mathbb{N}.
\end{align}
Now, recalling that $u_{n}\rightharpoonup u$ in $W^{s, p}(\R^{N})$ we may assume that, up to a subsequence, $u_{n}\ri u$ strongly in $L^{p}(\B_{R}(0))$ and there exists $h\in L^{p}(\B_{R}(0))$ such that $|u_{n}(x)|, |u(x)|\leq |h(x)|$ a. e. $x\in \B_{R}(0)$.

It is clear that
\begin{align}\label{ZZ18}
\int_{\B_{R}(0)} |f(u_{n}-u)||w|\, dx \leq c\eta \|w\|_{\e}
\end{align}
provided that $n$ is big enough. Let us define $D_{n}:=\{x\in \B_{R}(0) : |u_{n}(x) - u(x)|\geq 1\}$. Thus
\begin{align*}
&\int_{D_{n}} |f(u_{n})- f(u)||w|\, dx\\
 &\leq \int_{D_{n}} \left( c(\eta) (|u|^{p-1}+ |u_{n}|^{p-1}) + \eta (|u_{n}|^{\q-1} + |u|^{\q-1})\right) |w|\, dx \\
&\leq c\eta \|w\|_{\e} + 2c(\eta) \int_{D_{n}} |h|^{p-1}|w|\, dx \\
&\leq c\eta \|w\|_{\e} + 2c(\eta) \left( \int_{D_{n}} |h|^{p} \, dx\right)^{\frac{p-1}{p}} |w|_{p}.
\end{align*}
Observing that $|D_{n}|\ri 0$ as $n\ri \infty$, we can deduce that
\begin{align}\label{ZZ19}
\int_{D_{n}} |f(u_{n})- f(u)||w| \, dx \leq c\eta \|w\|_{\e}.
\end{align}
Since $u\in W^{s, p}(\R^{N})$, we know that $|\{|u|\geq L\}|\ri 0$ as $L\ri \infty$, so there exists $L= L(\eta)>0$ such that for all $n$
\begin{align}\label{ZZ20}
&\int_{(\B_{R}(0)\setminus D_{n})\cap \{|u|\geq L\}} |f(u_{n})- f(u)||w|\, dx \nonumber \\
&  \leq \int_{(\B_{R}(0)\setminus D_{n})\cap \{|u|\geq L\}} \left[\eta (|u_{n}|^{\q-1} + |u|^{\q-1})|w| + c(\eta) (|u_{n}|^{p-1}+ |u|^{p-1})|w| \right]\, dx \nonumber  \\
&  \leq c\eta \|w\|_{\e}+ c(\eta) (|u_{n}|_{\q}^{p-1} + |u|_{\q}^{p-1}) \,|w|_{\q}\, |(\B_{R}(0)\setminus D_{n})\cap \{|u|\geq L\}|^{\frac{\q-p}{p}} \nonumber \\
&  \leq c \eta \|w\|_{\e}.
\end{align}
On the other hand, by the Dominated Convergence Theorem we can infer
\begin{align*}
\int_{(\B_{R}(0)\setminus D_{n})\cap \{|u|\leq L\}} |f(u_{n})- f(u)|^{p}\, dx \ri 0 \quad  \mbox{ as } n\ri \infty.
\end{align*}
As a consequence
\begin{align}\label{ZZ21}
\int_{(\B_{R}(0)\setminus D_{n})\cap \{|u|\leq L\}} |f(u_{n})- f(u)| |w|\, dx\leq c \eta \|w\|_{\e}
\end{align}
for $n$ large enough. Putting together \eqref{ZZ19}, \eqref{ZZ20} and \eqref{ZZ21} we have
\begin{align*}
\int_{\B_{R}(0)} |f(u_{n})- f(u)| |w|\, dx \leq c \eta \|w\|_{\e}.
\end{align*}
This and \eqref{ZZ18} yield
\begin{align}\label{ZZ22}
\int_{\B_{R}(0)} |f(u_{n})- f(u)- f(u_{n}-u)| |w| \, dx \leq c \eta \|w\|_{\e}.
\end{align}
Taking into account \eqref{ZZ17} and \eqref{ZZ22} we can conclude that for $n$ large enough
\begin{align*}
\int_{\R^{N}} |f(u_{n})- f(u)- f(u_{n}-u)| |w|\, dx \leq c \eta \|w\|_{\e}.
\end{align*}
\end{proof}

\section{The autonomous problem}\label{Sect3}

In this section we consider the autonomous problem associated to \eqref{P}:
\begin{equation}\label{Pmu}
(-\Delta)_{p}^{s}u+ (-\Delta)_{q}^{s}u + \mu (|u|^{p-2}u + |u|^{q-2}u)= f(u) \mbox{ in } \R^{N}
\end{equation}
for all $\mu>0$.

The corresponding functional is given by
\begin{equation*}
\J_{\mu}(u)= \frac{1}{p}[u]_{s, p}^{p}+\frac{1}{q}[u]_{s, q}^{q} + \mu \left[\frac{1}{p} |u|_{p}^{p} +\frac{1}{q}|u|_{q}^{q}\right]-\int_{\R^{N}} F(u) \, dx
\end{equation*}
which is well-defined on the space $\X_{\mu}=W^{s, p}(\R^{N})\cap W^{s, q}(\R^{N})$ endowed with the norm
\begin{align*}
\|u\|_{\mu}= \|u\|_{\mu, p}+ \|u\|_{\mu, q},
\end{align*}
where
\begin{align*}
\|u\|_{\mu, t}= \left([u]_{s,t}^{t}+\mu |u|_{t}^{t} \,\right)^{\frac{1}{t}} \quad \mbox{ for all } t> 1.
\end{align*}
It is easy to check that $\J_{\mu}\in \C^{1}(\X_{\mu}, \R)$ and its differential is given by
\begin{align*}
\langle \J'_{\mu}(u), \varphi \rangle &= \iint_{\R^{2N}} \frac{|u(x)-u(y)|^{p-2}(u(x)-u(y))(\varphi(x)- \varphi(y))}{|x-y|^{N+sp}}\, dxdy \\
&\quad + \iint_{\R^{2N}} \frac{|u(x)-u(y)|^{q-2}(u(x)-u(y))(\varphi(x)- \varphi(y))}{|x-y|^{N+sq}}\, dxdy \\
&\quad +\mu \left[\int_{\R^{N}} |u|^{p-2}u\,\varphi \, dx+ \int_{\R^{N}} |u|^{q-2}u\,\varphi \, dx \right]  - \int_{\R^{N}} f(u)\varphi \, dx
\end{align*}
for any $u, \varphi \in \X_{\mu}$. Let us define the Nehari manifold associated to $\J_{\mu}$
\begin{equation*}
\N_{\mu} = \{u\in \X_{\mu}\setminus \{0\} : \langle \J'_{\mu}(u), u\rangle =0\}.
\end{equation*}
Now we prove that $\J_{\mu}$ possesses a mountain pass geometry \cite{AR}.
\begin{lemma}\label{lem1}
The functional $\J_{\mu}$ satisfies the following conditions:
\begin{compactenum}[(i)]
\item there exist $\alpha, \rho>0$ such that $\J_{\mu}(u)\geq \alpha$ with $\|u\|_{\mu}=\rho$;
\item there exists $e\in \X_{\mu}$ with $\|e\|_{\mu}>\rho$ such that $\J_{\mu}(e)<0$.
\end{compactenum}
\end{lemma}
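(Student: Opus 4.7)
The plan is standard: for (i), combine the growth bounds on $f$ with the Sobolev embeddings to get a lower bound of the form
\begin{equation*}
\J_{\mu}(u)\gtrsim \|u\|_{\mu,p}^{p}+\|u\|_{\mu,q}^{q}-C\|u\|_{\mu,q}^{r},
\end{equation*}
then perform a case analysis on the two summands of $\|u\|_{\mu}=\|u\|_{\mu,p}+\|u\|_{\mu,q}$. For (ii), exploit the $q$-superlinear condition $(f_{4})$ along an arbitrary fixed ray.

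For (i), from $(f_{2})$ and $(f_{3})$, for every $\eta>0$ there is $C_{\eta}>0$ with
\begin{equation*}
|F(t)|\leq \tfrac{\eta}{p}|t|^{p}+\tfrac{C_{\eta}}{r}|t|^{r}\quad\text{for all }t\in\R.
\end{equation*}
Since $\mu|u|_{p}^{p}\leq\|u\|_{\mu,p}^{p}$ and since $r\in(q,\q)$ yields $W^{s,q}(\R^{N})\hookrightarrow L^{r}(\R^{N})$ by Theorem~\ref{Sembedding}, we obtain
\begin{equation*}
\int_{\R^{N}}F(u)\,dx\leq \frac{\eta}{p\mu}\|u\|_{\mu,p}^{p}+C_{\eta}'\|u\|_{\mu,q}^{r}.
\end{equation*}
Choosing $\eta=\mu/2$ gives $\J_{\mu}(u)\geq \tfrac{1}{2p}\|u\|_{\mu,p}^{p}+\tfrac{1}{q}\|u\|_{\mu,q}^{q}-C\|u\|_{\mu,q}^{r}$. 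Now suppose $\|u\|_{\mu}=\rho\leq 1$; then $\max\{\|u\|_{\mu,p},\|u\|_{\mu,q}\}\geq\rho/2$, and since $\rho/2\leq 1$ and $p<q$ we have $(\rho/2)^{p}\geq(\rho/2)^{q}$, so in either case $\|u\|_{\mu,p}^{p}+\|u\|_{\mu,q}^{q}\geq(\rho/2)^{q}$. Hence
\begin{equation*}
\J_{\mu}(u)\geq c_{1}\rho^{q}-c_{2}\rho^{r}=\rho^{q}\bigl(c_{1}-c_{2}\rho^{r-q}\bigr),
\end{equation*}
which is bounded below by some $\alpha>0$ for $\rho$ sufficiently small, since $r>q$.

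For (ii), fix any nonnegative $u_{0}\in\mathcal{C}^{\infty}_{c}(\R^{N})\subset\X_{\mu}$ with $u_{0}\not\equiv 0$, so that by $(f_{1})$
\begin{equation*}
\J_{\mu}(tu_{0})=\frac{t^{p}}{p}\|u_{0}\|_{\mu,p}^{p}+\frac{t^{q}}{q}\|u_{0}\|_{\mu,q}^{q}-\int_{\R^{N}}F(tu_{0})\,dx\qquad(t>0).
\end{equation*}
By $(f_{4})$, for each $M>0$ there is $R_{M}>0$ with $F(t)\geq Mt^{q}$ for $t\geq R_{M}$; splitting the integral over $\{tu_{0}\geq R_{M}\}$ and its complement and using the boundedness of $F$ on $[0,R_{M}]$ together with $|\mathrm{supp}\,u_{0}|<\infty$, monotone convergence yields
\begin{equation*}
\liminf_{t\to\infty}\frac{1}{t^{q}}\int_{\R^{N}}F(tu_{0})\,dx\geq M\,|u_{0}|_{q}^{q}.
\end{equation*}
Since $M$ is arbitrary and $|u_{0}|_{q}>0$, the last term in $\J_{\mu}(tu_{0})/t^{q}$ diverges to $+\infty$, while the first two stay bounded (the first actually vanishing because $p<q$). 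Therefore $\J_{\mu}(tu_{0})\to-\infty$, and taking $e:=t_{0}u_{0}$ with $t_{0}$ large enough yields $\|e\|_{\mu}>\rho$ and $\J_{\mu}(e)<0$.

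The only mildly delicate point is the case analysis in (i): because $\|\cdot\|_{\mu}$ is the sum of two norms whose corresponding powers in $\J_{\mu}$ have different homogeneities ($p$ and $q$), one must use $\rho\leq 1$ together with $p<q$ to uniformly compare $\|u\|_{\mu,p}^{p}+\|u\|_{\mu,q}^{q}$ with a single power of $\rho$; everything else is routine.
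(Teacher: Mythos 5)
Your proposal is correct and follows essentially the same route as the paper: for (i) the growth bound coming from $(f_{2})$--$(f_{3})$ combined with the Sobolev embedding $W^{s,q}(\R^{N})\hookrightarrow L^{r}(\R^{N})$, and for (ii) the $q$-superlinearity $(f_{4})$ evaluated along a fixed ray $t\mapsto tu_{0}$. Your two-case argument in (i), forced by the fact that $\|\cdot\|_{\mu}=\|\cdot\|_{\mu,p}+\|\cdot\|_{\mu,q}$ mixes two homogeneities, is precisely the step the paper compresses into the one-line assertion that for $\|u\|_{\mu}=\rho\in(0,1)$ one has $\J_{\mu}(u)\geq C_{1}\|u\|_{\mu}^{q}-C_{2}\|u\|_{\mu}^{r}$, and your truncation argument in (ii) is an elementary and slightly more careful replacement for the paper's direct appeal to Fatou's lemma.
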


\begin{proof}
$(i)$ From assumptions $(f_{2})$ and $(f_{3})$, for any $\xi>0$ there exists $C_{\xi}$ such that
\begin{equation}\label{growthf}
|f(u)|\leq \xi |u|^{p-1} + C_{\xi}|u|^{r-1}.
\end{equation}
Therefore,
\begin{align*}
\J_{\mu}(u)&\geq \frac{1}{p}[u]_{s, p}^{p}+\frac{1}{q}[u]_{s, q}^{q} + \mu \left[\frac{1}{p} |u|_{p}^{p} +\frac{1}{q}|u|_{q}^{q}\right]-\frac{\xi}{p}|u|_{p}^{p} -\frac{C_{\xi}}{r}|u|_{r}^{r} \\
&= \frac{1}{p}[u]_{s, p}^{p}+\frac{1}{q}[u]_{s, q}^{q} +\frac{1}{p}(\mu -\xi)|u|_{p}^{p} + \frac{\mu}{q}|u|_{q}^{q} -\frac{C_{\xi}}{r}|u|_{r}^{r}.
\end{align*}
Choosing $\|u\|_{\mu}=\rho \in (0, 1)$, by Sobolev embedding we get
\begin{align*}
\J_{\mu}(u)\geq C_{1} \|u\|_{\mu}^{q}-\frac{C_{\xi}}{r}|u|_{r}^{r}\geq C_{1} \|u\|_{\mu}^{q}-C_{2} \|u\|_{\mu}^{r}.
\end{align*}
Taking into account that $r>q$, there is $\alpha>0$ such that $\J_{\mu}(u)\geq \alpha>0$ with $\|u\|_{\mu}=\rho$.

\noindent
$(ii)$
Fix $\varphi \in \C^{\infty}_{c}(\R^{N})$ such that $\varphi> 0$ in $\R^{N}$. Then using $(f_{4})$ and Fatou's Lemma we can deduce that
\begin{align*}
\frac{\J_{\mu}(t\varphi)}{\|t\varphi\|_{\mu}^{q}} &\leq \frac{1}{p}\frac{\|\varphi\|_{\mu, p}^{p}}{t^{q-p} \|\varphi\|_{\mu}^{q}}+\frac{1}{q}\frac{\|\varphi\|_{\mu, q}^{q}}{\|\varphi\|_{\mu}^{q}} -\int_{\supp \varphi} \frac{F(t\varphi)}{\|t\varphi\|_{\mu}^{q}} \, dx\\
&\leq \frac{1}{p}\frac{1}{t^{q-p}}+\frac{1}{q} -\int_{\supp \varphi} \frac{F(t\varphi)}{(t\varphi)^{q}} \left(\frac{\varphi}{\|\varphi\|_{\mu}}\right)^{q} \, dx\ri -\infty \,\, \mbox{ as } t\rightarrow \infty.
\end{align*}
\end{proof}

As a consequence of the mountain pass theorem without $(PS)$ condition (see \cite{W}) we can find a $(PS)_{c_{\mu}}$ sequence $\{u_{n}\}\subset \X_{\mu}$, that is
\begin{equation*}
\J_{\mu}(u_{n})\ri c_{\mu} \quad \mbox{ and } \quad \J_{\mu}'(u_{n})\ri 0,
\end{equation*}
where
\begin{align*}
c_{\mu}= \inf_{\gamma\in \Gamma} \max_{t\in [0, 1]} \J_{\mu}(\gamma(t))
\end{align*}
and
\begin{align*}
\Gamma=\{\gamma\in \C([0, 1], \X_{\mu}) \, : \, \gamma(0)=0, \, \J_{\mu}(\gamma(1))<0\}.
\end{align*}
In what follows we give a very useful characterization of $c_{\mu}$.

\begin{lemma}\label{lem2}
Assume that $(f_{1})$-$(f_{5})$ hold. Then, for each $u\in \X_{\mu}$ with $u\neq 0$, there exists a unique $t_{0}= t_{0}(u)>0$ such that $t_{0}u\in \N_{\mu}$ and $\J_{\mu}(t_{0}u)= \max_{t\geq 0} \J_{\mu}(tu)$. Moreover
\begin{align*}
c_{\mu}= \inf_{u\in \X_{\mu}\setminus \{0\}} \max_{t\geq 0} \J_{\mu}(tu)= \inf_{u\in \N_{\mu}} \J_{\mu}(u).
\end{align*}
\end{lemma}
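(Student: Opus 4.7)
The plan is the classical Nehari-manifold projection argument for the fractional $p\&q$ Laplacian, with $(f_5)$ replacing the Ambrosetti--Rabinowitz condition. Fix $u\in\X_\mu\setminus\{0\}$ with $u^+\not\equiv 0$: this is the only meaningful case, because if $u\leq 0$ a.e.\ then $(f_1)$ forces $f(tu)\equiv 0$ for $t>0$, so $\J_\mu(tu)\to+\infty$ and no projection onto $\N_\mu$ exists; such $u$ are also absent from $\N_\mu$ itself and hence do not influence either infimum. Set $g(t):=\J_\mu(tu)$ on $[0,\infty)$. Since $g'(t)=\langle\J'_\mu(tu),u\rangle$, the positive zeros of $g'$ correspond exactly to the scalars $t>0$ for which $tu\in\N_\mu$.

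For the existence and uniqueness of $t_0$, I would analyse
\begin{equation*}
\frac{g'(t)}{t^{q-1}}=t^{p-q}\|u\|_{\mu,p}^{p}+\|u\|_{\mu,q}^{q}-\int_{\{u>0\}}\frac{f(tu)}{(tu)^{q-1}}u^{q}\,dx.
\end{equation*}
Since $p<q$ the first term is strictly decreasing in $t>0$, and by $(f_5)$ the integrand is pointwise strictly increasing in $t$ on $\{u>0\}$, so the whole expression is strictly decreasing on $(0,\infty)$. Using $(f_2)$--$(f_3)$ together with the embedding in Lemma~\ref{embedding} one obtains $g'(t)/t^{p-1}\to\|u\|_{\mu,p}^{p}>0$ as $t\to 0^+$, while $(f_5)$ combined with $(f_4)$ forces $f(s)/s^{q-1}\to+\infty$ as $s\to+\infty$ (otherwise $F(s)=O(s^q)$, contradicting $(f_4)$); consequently $F(ts)/t^q\to+\infty$ pointwise on $\{u>0\}$ and Fatou gives $g(t)/t^q\to-\infty$. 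The rescaled derivative therefore has exactly one zero $t_0>0$, and $g$ increases on $(0,t_0)$ and decreases on $(t_0,\infty)$, so $\J_\mu(t_0u)=\max_{t\geq 0}\J_\mu(tu)$ and $t_0u\in\N_\mu$.

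Set $\tilde c_\mu:=\inf_{\N_\mu}\J_\mu$. For $u\in\N_\mu$ uniqueness yields $t_0(u)=1$ and hence $\max_{t\geq 0}\J_\mu(tu)=\J_\mu(u)$, so
\begin{equation*}
\inf_{u\in\X_\mu\setminus\{0\}}\max_{t\geq 0}\J_\mu(tu)=\tilde c_\mu
\end{equation*}
(the discarded $u$ with $u^+\equiv 0$ only produce the value $+\infty$). For the comparison with $c_\mu$, take any $u\in\N_\mu$ and choose $T\geq 1$ with $\J_\mu(Tu)<0$, possible because $g(t)\to-\infty$; then $\gamma(\tau):=\tau Tu\in\Gamma$ and $\max_\tau\J_\mu(\gamma(\tau))=\max_{s\in[0,T]}g(s)=g(1)=\J_\mu(u)$, giving $c_\mu\leq\J_\mu(u)$ and, after taking the infimum, $c_\mu\leq\tilde c_\mu$.

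The reverse inequality is the delicate step: I would show that every $\gamma\in\Gamma$ crosses $\N_\mu$. Let $\eta(\tau):=\langle\J'_\mu(\gamma(\tau)),\gamma(\tau)\rangle$, which is continuous on $[0,1]$. Integrating the monotone quotient from $(f_5)$ yields $F(s)\leq f(s)s/q$ for $s\geq 0$, whence
\begin{equation*}
q\J_\mu(\gamma(1))-\eta(1)\geq q\Bigl(\tfrac{1}{p}-\tfrac{1}{q}\Bigr)\|\gamma(1)\|_{\mu,p}^{p}\geq 0,
\end{equation*}
so $\J_\mu(\gamma(1))<0$ forces $\eta(1)<0$. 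For $\tau$ near $0$, $(f_2)$--$(f_3)$ and Lemma~\ref{embedding} give $\eta(\tau)\geq c\|\gamma(\tau)\|_\mu^{q}-C\|\gamma(\tau)\|_\mu^{r}>0$ once $\|\gamma(\tau)\|_\mu$ is small, and the same estimate rules out $\gamma(\tau^*)=0$ at $\tau^*:=\sup\{\tau\in[0,1]:\eta(\tau)\geq 0\}$. Continuity then forces $\eta(\tau^*)=0$, so $\gamma(\tau^*)\in\N_\mu$ and $\max_\tau\J_\mu\circ\gamma\geq\J_\mu(\gamma(\tau^*))\geq\tilde c_\mu$; taking the infimum over $\gamma\in\Gamma$ closes the argument. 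The main obstacle is exactly the uniqueness of $t_0$: the mixed $p$- and $q$-Laplacian terms together with the absence of \eqref{AR} prevent a one-shot derivative argument, and the remedy is to rescale by the dominant power $t^{q-1}$ so that $(f_5)$ makes the integral piece monotone while $t^{p-q}$ takes care of the subordinate $p$-contribution.
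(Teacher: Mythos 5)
Your proof is correct and, for the existence/uniqueness of the projection $t_0$, runs on the same engine as the paper: both hinge on $(f_5)$ together with $p<q$, after dividing the Nehari equation by the dominant power $t^{q-1}$ so that the $p$-contribution becomes strictly decreasing in $t$ while the $f$-quotient is monotone by $(f_5)$. The paper phrases uniqueness as a subtract-and-contradict argument between two putative projections, whereas you fold the same two monotonicities into the single observation that $g'(t)/t^{q-1}$ is strictly decreasing; these are the same idea in different clothing. What you add beyond the paper is real, though: you explicitly dispose of the degenerate case $u^{+}\equiv 0$ (for which the projection genuinely does not exist and which the lemma as stated silently excludes), you derive $f(s)/s^{q-1}\to\infty$ from $(f_4)$--$(f_5)$ to justify $g(t)/t^q\to-\infty$ via Fatou, and above all you actually prove the minimax identity $c_\mu=\inf_{\N_\mu}\J_\mu$ --- constructing the radial path to get $c_\mu\leq\inf_{\N_\mu}\J_\mu$ and showing every $\gamma\in\Gamma$ crosses $\N_\mu$ via the sign change of $\eta(\tau)=\langle\J_\mu'(\gamma(\tau)),\gamma(\tau)\rangle$, using $F(s)\leq f(s)s/q$ (a consequence of $(f_5)$) to force $\eta(1)<0$. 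The paper merely asserts this characterization without proof, so this is a welcome completion. All the estimates you invoke, including $\eta(\tau)\geq c\|\gamma(\tau)\|_\mu^{q}-C\|\gamma(\tau)\|_\mu^{r}$ for small norm (which needs the absorption $\xi|v|_p^p\leq(\xi/\mu)\|v\|_{\mu,p}^p$ before comparing powers), are sound.
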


\begin{proof}
Let $u\in \X_{\mu}\setminus\{0\}$ and define $h(t):= \J_{\mu}(tu)$. Then, from the arguments in Lemma \ref{lem1}, we know that there exists $t_{0}>0$ such that $h'(t_{0})=0$ and $t_{0}u\in \N_{\mu}$. Let us note that if $u\in \N_{\mu}$, then hypothesis $(f_{1})$ ensure that $u^{+}=\max\{u, 0\}\neq 0$.

Now, we aim to prove that $t_{0}$ is the unique critical point of $h$. Arguing by contradiction, let us take positive $t_{1}$ and $t_{2}$ such that $t_{1}u, t_{2}u\in \N_{\mu}$. Then we have
\begin{align*}
t_{1}^{p-q}[u]_{s, p}^{p} +[u]_{s, q}^{q}+ \mu \left[ t_{1}^{p-q} |u|_{p}^{p} + |u|_{q}^{q}\right] =\int_{\R^{N}} \frac{f(t_{1}u)}{(t_{1}u)^{q-1} }u^{q}\, dx
\end{align*}
and
\begin{align*}
t_{2}^{p-q}[u]_{s, p}^{p} + [u]_{s, q}^{q}+ \mu \left[t_{2}^{p-q}|u|_{p}^{p} + |u|_{q}^{q}\right] =\int_{\R^{N}} \frac{f(t_{2}u)}{(t_{2}u)^{q-1} }u^{q}\, dx.
\end{align*}
Subtracting terms by terms the above equalities we have
\begin{align*}
(t_{1}^{p-q}-t_{2}^{p-q})[u]_{s, p}^{p} + \mu (t_{1}^{p-q}-t_{2}^{p-q}) |u|_{p}^{p} = \int_{\R^{N}} \left[\frac{f(t_{1}u)}{(t_{1}u)^{q-1} } - \frac{f(t_{2}u)}{(t_{2}u)^{q-1} } \right] u^{q} dx,
\end{align*}
which allows us to deduce a contradiction. Indeed, if $t_{1}<t_{2}$, taking into account that $p<q$ and using $(f_{5})$ we get
\begin{align*}
0<(t_{1}^{p-q}-t_{2}^{p-q})[u]_{s, p}^{p} \!+\! \mu (t_{1}^{p-q}-t_{2}^{p-q}) |u|_{p}^{p} = \int_{\R^{N}} \left[\frac{f(t_{1}u)}{(t_{1}u)^{q-1} } \!-\! \frac{f(t_{2}u)}{(t_{2}u)^{q-1} } \right] u^{q} dx<0.
\end{align*}
\end{proof}

\begin{lemma}\label{lem3}
Let $\{u_{n}\}\subset \N_{\mu}$ be such that $\J_{\mu}(u_{n})\ri c$. Then, $\{u_{n}\}$ is bounded in $\X_{\mu}$.
\end{lemma}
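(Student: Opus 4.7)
The plan is to split the boundedness question into a bound for $\|u_n\|_{\mu,p}$ (which falls out for free from $(f_5)$ combined with the Nehari identity) and a bound for $\|u_n\|_{\mu,q}$ (the real content), which I would attack by contradiction via a rescaling plus a Lions-type dichotomy, closed by exploiting either the mountain-pass characterization in Lemma \ref{lem2} or the superquadratic assumption $(f_4)$.

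First, on $\N_\mu$ the identity
\begin{equation*}
\J_\mu(u) = \J_\mu(u) - \frac{1}{q}\langle \J'_\mu(u), u\rangle = \left(\frac{1}{p} - \frac{1}{q}\right)\|u\|_{\mu,p}^p + \int_{\R^N} H(u)\, dx
\end{equation*}
holds, where $H(t) := \frac{1}{q}f(t)t - F(t)$. I would note that $H \geq 0$ everywhere: indeed $H \equiv 0$ on $(-\infty,0]$ by $(f_1)$, while for $t>0$ a direct computation gives $H'(t) = \frac{1}{q}(f'(t)t - (q-1)f(t))$, which is $\geq 0$ thanks to the monotonicity of $f(t)/t^{q-1}$ asserted in $(f_5)$. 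Since $\J_\mu(u_n) \to c$, both non-negative summands on the right are bounded uniformly in $n$, so $\|u_n\|_{\mu,p} \leq C$ and $\int_{\R^N} H(u_n)\, dx = O(1)$.

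Suppose now, aiming at a contradiction, that $\|u_n\|_{\mu,q} \to \infty$, and set $w_n := u_n/\|u_n\|_{\mu,q}$, so that $\|w_n\|_{\mu,q} = 1$ while $\|w_n\|_{\mu,p} \to 0$ (in particular $|w_n|_p \to 0$). Since $[w_n^+]_{s,q} \leq [w_n]_{s,q}$ and $|w_n^+|_q \leq |w_n|_q$, the sequence $\{w_n^+\}$ is bounded in $W^{s,q}(\R^N)$, and I would apply the vanishing/non-vanishing alternative from Lemma \ref{lemVT}. If the vanishing case $\sup_{y\in \R^N}\int_{\B_1(y)}(w_n^+)^q\, dx \to 0$ occurs, then Lemma \ref{lemVT} yields $w_n^+ \to 0$ in $L^t(\R^N)$ for every $t \in (q, \q)$; together with $|w_n^+|_p \to 0$ and the growth bound $|F(t)| \leq C(|t|^p + |t|^r)$ for $t\geq 0$ coming from $(f_2)$-$(f_3)$, this gives $\int_{\R^N} F(Rw_n)\, dx = \int_{\R^N} F(Rw_n^+)\, dx \to 0$ for every fixed $R > 0$. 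Invoking Lemma \ref{lem2}, namely $\J_\mu(u_n) = \max_{t \geq 0}\J_\mu(t u_n) \geq \J_\mu(Rw_n)$, one finds $c \geq R^q/q$ for every $R > 0$, which is absurd.

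In the non-vanishing case, there exist $\delta > 0$ and $y_n \in \R^N$ with $\int_{\B_1(y_n)} (w_n^+)^q\, dx \geq \delta$; using the translation invariance of $\J_\mu$ and $\N_\mu$ in the autonomous setting, I would relabel $u_n(\cdot + y_n)$ as $u_n$ so that $\int_{\B_1(0)} (w_n^+)^q\, dx \geq \delta$. A subsequence then satisfies $w_n \rightharpoonup w$ in $W^{s,q}(\R^N)$ and $w_n \to w$ a.e.\ and in $L^q_{\mathrm{loc}}$, giving $w^+ \not\equiv 0$; let $A := \{w > 0\}$, which has positive measure and on which $u_n = \|u_n\|_{\mu,q}\,w_n \to +\infty$ pointwise. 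Dividing the energy identity $\J_\mu(u_n) = c + o(1)$ by $\|u_n\|_{\mu,q}^q$ produces the limit
\begin{equation*}
\int_{\R^N} \frac{F(u_n)}{\|u_n\|_{\mu,q}^q}\, dx \longrightarrow \frac{1}{q},
\end{equation*}
and the plan is to contradict this finite limit by showing the left-hand side tends to $+\infty$. On $A$ the integrand equals $(F(u_n)/u_n^q)\,w_n^q$, which is non-negative for $n$ large and tends pointwise to $+\infty$ by $(f_4)$, so Fatou forces $\int_A F(u_n)/\|u_n\|_{\mu,q}^q\, dx \to +\infty$. The main obstacle---and the reason this last step is subtle---is ruling out a compensating $-\infty$ contribution from $A^c$. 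For this I would fix $t_0 > 0$ such that $F(t) \geq 0$ for all $t \geq t_0$ (available from $(f_4)$) and split $A^c = (A^c \cap \{u_n \leq t_0\}) \cup (A^c \cap \{u_n > t_0\})$: the second piece contributes non-negatively, while on the first the estimate $|F(t)| \leq C|t|^p$ for $0 \leq t \leq t_0$ (from $(f_2)$) together with the bound $|u_n|_p \leq C$ from Step 1 produces a uniform $L^1$ bound, so $\int_{A^c} F(u_n)/\|u_n\|_{\mu,q}^q\, dx \geq -o(1)$, closing the contradiction.
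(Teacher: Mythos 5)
Your proof is correct, and it takes a genuinely different route from the paper's. The paper assumes $\|u_n\|_\mu\to\infty$, normalizes by the \emph{full} norm $\|u_n\|_\mu$, and runs a single vanishing/non-vanishing dichotomy on $|v_n|$; its vanishing branch concludes through a somewhat fiddly chain (for $R>1$: $\frac{\|u_n\|_{\mu,p}^p}{\|u_n\|_\mu^p}\ge\frac{\|u_n\|_{\mu,p}^q}{\|u_n\|_\mu^q}$, $R^q>R^p$, and $a^q+b^q\ge C_q(a+b)^q$) to reach $\J_\mu(u_n)\ge \frac{R^p}{q}C_q+o_n(1)$. You instead begin by extracting $\|u_n\|_{\mu,p}=O(1)$ from the Nehari identity $\J_\mu(u_n)=\left(\frac{1}{p}-\frac{1}{q}\right)\|u_n\|_{\mu,p}^p+\int_{\R^N}H(u_n)\,dx$ together with $H\ge 0$ (a consequence of $(f_5)$, since $f\in\C^1$); the paper never records this separate $p$-bound. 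With it in hand you normalize by $\|u_n\|_{\mu,q}$ alone, so $\|w_n\|_{\mu,p}\to0$, and your vanishing case collapses cleanly to $c\ge R^q/q$ without any of the paper's inequality juggling. One remark worth making: once you know $|w_n|_p\to0$, your non-vanishing branch is in fact vacuous. After translating, $w_n\to w$ in $L^p_{\mathrm{loc}}$ (by compactness of the $W^{s,q}\hookrightarrow L^p_{\mathrm{loc}}$ embedding) and $w_n\to0$ in $L^p(\R^N)$, which forces $w=0$, already contradicting the $w^+\not\equiv0$ you establish in the same breath; so the subsequent Fatou$/(f_4)$ argument, while a valid chain of deductions under the (impossible) premise, is dead weight. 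In the paper's normalization, by contrast, the non-vanishing branch is genuine and the Fatou step is essential. Finally, your care over the sign of $F$ on $A^c$ (splitting at a threshold $t_0$ with $F\ge0$ on $[t_0,\infty)$ and bounding $|F|\le C|t|^p$ below it) is more scrupulous than the paper, which implicitly treats $F$ as nonnegative.
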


\begin{proof}
Assume by contradiction that $\|u_{n}\|_{\mu}\ri \infty$ for some subsequence.
Set $v_{n}= \frac{u_{n}}{\|u_{n}\|_{\mu}}$. Then $\|v_{n}\|_{\mu}=1$ for any $n\in \mathbb{N}$.
Moreover, for each $\rho>0$ it holds that
\begin{align}\label{new1}
\lim_{n\ri \infty} \sup_{y\in \R^{N}} \int_{\B_{\rho}(y)} |v_{n}|^{q} \, dx=0.
\end{align}
If \eqref{new1} does not hold, then for some $\rho>0$ there exist $\delta>0$ and a sequence $\{y_{n}\}\subset \R^{N}$ such that
\begin{align*}
\int_{\B_{\rho}(y_{n})} |v_{n}|^{q} \, dx\geq \delta>0.
\end{align*}
Let $\tilde{v}_{n}=v_{n}(\cdot+ y_{n})$. Since $\{\tilde{v}_{n}\}$ is bounded in $\X_{\mu}$, we may assume that, up to a subsequence,
\begin{align*}
\tilde{v}_{n} \rightharpoonup \tilde{v} &\mbox{ in } \X_{\mu}\\
\tilde{v}_{n} \ri \tilde{v} &\mbox{ in } L^{t}_{loc}(\R^{N}) \mbox{ for any } t\in[p, \q).
\end{align*}
Then, $\tilde{v}\not \equiv 0$ in view of
\begin{align*}
\int_{\B_{\rho}(0)} |\tilde{v}|^{q} dx= \lim_{n\ri \infty} \int_{\B_{\rho}(0)} |\tilde{v}_{n}|^{q} dx = \lim_{n\ri \infty} \int_{\B_{\rho}(y_{n})} |v_{n}|^{q} dx \geq \delta.
\end{align*}
Set $\tilde{u}_{n}= \|u_{n}\|_{\mu} \tilde{v}_{n}$. From $(f_{5})$ we have
\begin{align*}
\frac{F(\tilde{u}_{n})}{|\tilde{u}_{n}|^{q}} |\tilde{v}_{n}|^{q} \ri \infty \quad \mbox{ in } \Omega= \{x\in \R^{N} : \tilde{v}(x) \neq 0\}.
\end{align*}
Hence, recalling that $q>p$, $\|u_{n}\|_{\mu}\ri \infty$,  $\J_{\mu}(u_{n})= c+o_{n}(1)$ and applying Fatou's Lemma we obtain
\begin{align*}
\frac{1}{q}+o_{n}(1)&=\frac{1}{p}\|u_{n}\|_{\mu}^{p-q}+\frac{1}{q}-\frac{c+o_{n}(1)}{\|u_{n}\|_{\mu}^{q}}\\
&=\frac{1}{p} \frac{\|u_{n}\|_{\mu}^{p}}{\|u_{n}\|_{\mu}^{q}} + \frac{1}{q}\frac{\|u_{n}\|_{\mu}^{q}}{\|u_{n}\|_{\mu}^{q}} - \frac{c+o_{n}(1)}{\|u_{n}\|_{\mu}^{q}}\\
&\geq \frac{1}{p} \frac{\|u_{n}\|_{\mu, p}^{p}}{\|u_{n}\|_{\mu}^{q}} + \frac{1}{q}\frac{\|u_{n}\|_{\mu, q}^{q}}{\|u_{n}\|_{\mu}^{q}} - \frac{c+o_{n}(1)}{\|u_{n}\|_{\mu}^{q}}\\
&= \int_{\R^{N}} \frac{F(u_{n})}{\|u_{n}\|_{\mu}^{q}}\, dx=\int_{\R^{N}} \frac{F(\tilde{u}_{n})}{\|u_{n}\|_{\mu}^{q}}\, dx\geq \int_{\Omega} \frac{F(\tilde{u}_{n})}{|\tilde{u}_{n}|^{q}} |\tilde{v}_{n}|^{q}\, dx \ri \infty,
\end{align*}
and this is impossible. Thus, \eqref{new1} holds true and by Lemma \ref{lemVT} we have that $u_{n}\ri 0$ in $L^{m}(\R^{N})$ for any $m\in (q, \q)$. From assumptions $(f_{2})$ and $(f_{3})$, for any $\alpha>1$ and $\tau>0$, there exists a positive constant $C_{\tau}$ such that
\begin{equation}\label{F1}
|F(\alpha t)|\leq \tau |\alpha t|^{p} +C_{\tau} |\alpha t|^{r}
\end{equation}
from which
\begin{align*}
\limsup_{n\ri \infty} \int_{\R^{N}} |F(\alpha v_{n})|\, dx \leq \limsup_{n\ri \infty} \int_{\R^{N}} ( \tau |\alpha v_{n}|^{p} +C_{\tau} |\alpha v_{n}|^{r})\, dx\leq \tau \alpha^{p},
\end{align*}
and letting the limit as $\tau\ri 0$ we can infer that
\begin{align*}
\lim_{n\ri \infty} \int_{\R^{N}} F(\alpha v_{n})\, dx=0.
\end{align*}
Now, for any $R>0$, we note that $\frac{R}{\|u_{n}\|_{\mu}}\in (0, 1)$ for $n$ large.
Let us observe that from $p<q$ and $\|u_{n}\|_{\mu ,p}\leq \|u_{n}\|_{\mu}$ it follows that
$$
\frac{\|u_{n}\|_{\mu,p}^{p}}{\|u_{n}\|_{\mu}^{p}}\geq\frac{\|u_{n}\|_{\mu,p}^{q}}{\|u_{n}\|_{\mu}^{q}}
$$
and for all $R>1$ we also have $R^{q}>R^{p}$. Using these inequalities and $a^{q}+b^{q}\geq C_{q}(a+b)^{q}$ for all $a, b\geq 0$ and $q>1$, we can infer that
\begin{align*}
\J_{\mu}(u_{n})&= \max_{t\geq 0} \J_{\mu}(tu_{n}) \geq \J_{\mu}\left( \frac{R}{\|u_{n}\|_{\mu}} u_{n} \right) = \J_{\mu}(Rv_{n}) \\
&= \frac{R^{p}}{p} \frac{\|u_{n}\|_{\mu, p}^{p}}{\|u_{n}\|_{\mu}^{p}} + \frac{R^{q}}{q} \frac{\|u_{n}\|_{\mu, q}^{q}}{\|u_{n}\|_{\mu}^{q}}- \int_{\R^{N}} F(Rv_{n})dx \\
&\geq  \frac{R^{p}}{q} \left(\frac{\|u_{n}\|_{\mu, p}^{q}+\|u_{n}\|_{\mu, q}^{q}}{\|u_{n}\|_{\mu}^{q}}\right)+o_{n}(1) \\
&\geq \frac{R^{p}}{q} C_{q}+o_{n}(1) \quad \forall R>1.
\end{align*}
Taking the limit as $R\ri \infty$ we can deduce that $\J_{\mu}(u_{n})\ri \infty$ which gives a contradiction.
\end{proof}

Now we prove the next technical lemma which is crucial to show that a $(PS)$ sequence of $\J_{\mu}$ on $\N_{\mu}$ is a $(PS)$ sequence of $\J_{\mu}$ in $\X_{\mu}$.

\begin{proposition}\label{prop1}
Let $\{u_{n}\}\subset \N_{\mu}$ be such that $\J_{\mu}(u_{n})\ri c$ with $u_{n}\rightharpoonup 0$ in $\X_{\mu}$. Then, one of the following alternatives occurs:
\begin{compactenum}[$(a)$]
\item $u_{n}\ri 0$ in $\X_{\mu}$;
\item there are a sequence $\{y_{n}\}\subset \R^{N}$ and constants $R, \beta>0$ such that
\begin{align*}
\liminf_{n\ri \infty} \int_{\B_{R}(y_{n})} |u_{n}|^{q} dx \geq \beta>0.
\end{align*}
\end{compactenum}
\end{proposition}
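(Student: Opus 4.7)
The plan is a standard Lions-type vanishing dichotomy at the $L^{q}$ scale, closed with the Nehari identity and the subcritical growth of $f$. First, since $\{u_{n}\}\subset \N_{\mu}$ and $\J_{\mu}(u_{n})\ri c$, Lemma \ref{lem3} gives boundedness of $\{u_{n}\}$ in $\X_{\mu}$, hence in $W^{s,q}(\R^{N})$ by Lemma \ref{embedding}. I would then argue by contrapositive: if alternative $(b)$ fails, then for every $R>0$ one has
\[
\sup_{y\in \R^{N}} \int_{\B_{R}(y)} |u_{n}|^{q}\, dx \ri 0,
\]
since otherwise a subsequential positive lower bound on this supremum would select centers $y_{n}$ realizing $(b)$.

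With this uniform vanishing, Lemma \ref{lemVT} applied with exponent $q$ in place of $p$ (legitimate because $N>sq$) yields $u_{n}\ri 0$ in $L^{t}(\R^{N})$ for every $t\in(q,\q)$. Since $r\in(q,\q)$ by $(f_{3})$, in particular $|u_{n}|_{r}\ri 0$.

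To close, I would test the Nehari identity $\langle \J'_{\mu}(u_{n}),u_{n}\rangle=0$ against $u_{n}$ itself, obtaining
\[
[u_{n}]_{s,p}^{p}+[u_{n}]_{s,q}^{q}+\mu|u_{n}|_{p}^{p}+\mu|u_{n}|_{q}^{q}=\int_{\R^{N}} f(u_{n})u_{n}\, dx.
\]
From $(f_{2})$ and $(f_{3})$ one has, for every $\xi>0$, an estimate $|f(t)t|\leq \xi|t|^{p}+C_{\xi}|t|^{r}$; choosing $\xi=\mu/2$ and absorbing the $\xi|u_{n}|_{p}^{p}$ contribution into the left-hand side leaves
\[
[u_{n}]_{s,p}^{p}+[u_{n}]_{s,q}^{q}+\tfrac{\mu}{2}|u_{n}|_{p}^{p}+\mu|u_{n}|_{q}^{q}\leq C_{\xi}|u_{n}|_{r}^{r}\ri 0,
\]
so each nonnegative summand on the left vanishes, giving $\|u_{n}\|_{\mu,p}\ri 0$ and $\|u_{n}\|_{\mu,q}\ri 0$, i.e. $\|u_{n}\|_{\mu}\ri 0$, which is $(a)$.

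The delicate point I anticipate is not the dichotomy itself but the fact that the $L^{p}$ mass of $u_{n}$ is not controlled by $L^{q}$-vanishing; the absorption in the last display crucially uses the strictly positive mass coefficient $\mu$ attached to $|u_{n}|_{p}^{p}$ inside the Nehari identity, which is precisely what the weighted framework with potential bounded below by $V_{0}>0$ (here replaced by $\mu>0$) provides. Everything else is a direct application of Lemma \ref{lem3}, Lemma \ref{lemVT}, and the subcritical growth of $f$.
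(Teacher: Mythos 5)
Your proof is correct and follows essentially the same route as the paper's: boundedness from Lemma \ref{lem3}, the Lions vanishing alternative at the $L^{q}$ scale combined with Lemma \ref{lemVT} to get $u_{n}\ri 0$ in $L^{r}(\R^{N})$, and then the Nehari identity with the growth bound $|f(t)t|\le \xi|t|^{p}+C_{\xi}|t|^{r}$ (choosing $\xi<\mu$ to absorb the $L^{p}$ term) to force $\|u_{n}\|_{\mu}\ri 0$. The only cosmetic difference is that the paper does not explicitly remark on $N>sq$ when invoking Lemma \ref{lemVT} with exponent $q$, but the substance is identical.
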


\begin{proof}
Assume that $(b)$ does not hold true. Then, for any $R>0$ it holds
\begin{align*}
\lim_{n\ri \infty} \sup_{y\in \R^{N}} \int_{\B_{R}(y)} |u_{n}|^{q} dx =0.
\end{align*}
Since $\{u_{n}\}$ is bounded in $\X_{\mu}$, from Lemma \ref{lemVT} it follows that
\begin{align}\label{un0r}
u_{n}\ri 0 \mbox{ in } L^{t}(\R^{N}) \mbox{ for any } t\in (q, \q).
\end{align}
Fix $\xi \in (0, \mu)$. Then, taking into account that $\{u_{n}\}\subset \N_{\mu}$ and \eqref{growthf} we have
\begin{align*}
0&= \langle \J'_{\mu}(u_{n}), u_{n} \rangle \\
&\geq [u_{n}]_{s, p}^{p}+ [u_{n}]_{s, q}^{q} + \mu \left[ |u_{n}|_{p}^{p} + |u_{n}|_{q}^{q}\right] - \xi |u_{n}|_{p}^{p} - C_{\xi} |u_{n}|_{r}^{r}\\
&\geq C_{1}\|u_{n}\|_{s, p}^{p}+C_{2} \|u_{n}\|_{s, q}^{q} -C_{3}|u_{n}|_{r}^{r},
\end{align*}
and in view of \eqref{un0r} we have that $\|u_{n}\|_{\mu}\ri 0$.
\end{proof}

\begin{corollary}\label{cor2.4}
Let $\{u_{n}\}\subset \N_{\mu}$ be a $(PS)_{c}$ sequence for $\J_{\mu}$. Then $\{u_{n}\}$ is a $(PS)_{c}$ sequence for $\J_{\mu}$ in $\X_{\mu}$.
\end{corollary}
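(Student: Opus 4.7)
The plan is to invoke the Lagrange multiplier rule: since $\{u_{n}\}\subset\N_{\mu}$ is a $(PS)_{c}$ sequence for $\J_{\mu}$ restricted to the Nehari manifold $\N_{\mu}=\{G_{\mu}=0\}$, where $G_{\mu}(u):=\langle\J_{\mu}'(u),u\rangle$, there exist multipliers $\lambda_{n}\in\R$ with
\[
\J_{\mu}'(u_{n})-\lambda_{n}\,G_{\mu}'(u_{n})\to 0\quad\text{in }\X_{\mu}^{*}.
\]
By Lemma \ref{lem3}, $\{u_{n}\}$ is bounded in $\X_{\mu}$, so $\{G_{\mu}'(u_{n})\}$ is bounded in $\X_{\mu}^{*}$, and it therefore suffices to prove $\lambda_{n}\to 0$.

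Testing the relation above with $u_{n}$ and using $\langle\J_{\mu}'(u_{n}),u_{n}\rangle=0$ gives $\lambda_{n}\langle G_{\mu}'(u_{n}),u_{n}\rangle=o_{n}(1)$. A direct differentiation, combined with the Nehari identity $\int_{\R^{N}}f(u_{n})u_{n}\,dx=\|u_{n}\|_{\mu,p}^{p}+\|u_{n}\|_{\mu,q}^{q}$, yields
\[
\langle G_{\mu}'(u_{n}),u_{n}\rangle=(p-1)\|u_{n}\|_{\mu,p}^{p}+(q-1)\|u_{n}\|_{\mu,q}^{q}-\int_{\R^{N}}f'(u_{n})u_{n}^{2}\,dx.
\]
From $(f_{5})$ one has $f'(t)t\ge(q-1)f(t)$ for $t>0$, while both sides vanish for $t\le 0$ by $(f_{1})$; hence $\int f'(u_{n})u_{n}^{2}\,dx\ge(q-1)\int f(u_{n})u_{n}\,dx$, and substituting via Nehari produces
\[
\langle G_{\mu}'(u_{n}),u_{n}\rangle\le(p-q)\|u_{n}\|_{\mu,p}^{p}\le 0.
\]

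The main obstacle is ruling out the degenerate alternative $\|u_{n}\|_{\mu,p}\to 0$ along a subsequence, which would make the upper bound above vanish and prevent the scheme from closing. Arguing by contradiction, suppose $\|u_{n}\|_{\mu,p}\to 0$; then $|u_{n}|_{p}\to 0$, and since $\{u_{n}\}$ is bounded in $\X_{\mu}\hookrightarrow L^{\q}(\R^{N})$ by Lemma \ref{embedding}, H\"older interpolation between $L^{p}$ and $L^{\q}$ yields $|u_{n}|_{t}\to 0$ for every $t\in[p,\q)$; in particular $|u_{n}|_{r}\to 0$. The subcritical estimate \eqref{growthf} then gives $\int f(u_{n})u_{n}\,dx\le\xi|u_{n}|_{p}^{p}+C_{\xi}|u_{n}|_{r}^{r}\to 0$, and the Nehari identity forces $\|u_{n}\|_{\mu,q}\to 0$; hence $\|u_{n}\|_{\mu}\to 0$. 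This contradicts the uniform lower bound $\|u\|_{\mu}\ge\delta_{0}>0$ on $\N_{\mu}$, which is obtained (as in the proof of Proposition \ref{prop1}) by combining \eqref{growthf} with the Sobolev embedding.

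Consequently $\langle G_{\mu}'(u_{n}),u_{n}\rangle$ is bounded away from $0$, forcing $\lambda_{n}\to 0$. Since $\{G_{\mu}'(u_{n})\}$ is bounded in $\X_{\mu}^{*}$, we deduce $\lambda_{n}G_{\mu}'(u_{n})\to 0$ and therefore $\J_{\mu}'(u_{n})\to 0$ in $\X_{\mu}^{*}$, which is the desired conclusion.
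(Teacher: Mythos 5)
Your proof is correct, but it departs from the paper's argument in the decisive step, namely showing that $\langle G_{\mu}'(u_{n}),u_{n}\rangle$ is bounded away from zero. The paper's proof also computes $\langle \I_{\mu}'(u_{n}),u_{n}\rangle$ (their notation for your $G_{\mu}'$), but then \emph{discards} the contribution $(q-p)\|u_{n}\|_{\mu,p}^{p}\geq 0$ and retains only the inequality
\[
\langle \I_{\mu}'(u_{n}),u_{n}\rangle\leq -\int_{\R^{N}}\bigl(f'(u_{n})u_{n}^{2}-(q-1)f(u_{n})u_{n}\bigr)\,dx.
\]
To turn this into a strictly negative $\limsup$, the paper invokes the non-vanishing alternative of Proposition \ref{prop1}, translates to obtain a nontrivial weak limit $\tilde u$, and applies Fatou's lemma together with the strict monotonicity in $(f_{5})$ to the integrand $f'(\tilde u)\tilde u^{2}-(q-1)f(\tilde u)\tilde u>0$. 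You instead \emph{keep} the $(p-q)\|u_{n}\|_{\mu,p}^{p}$ term, obtaining $\langle G_{\mu}'(u_{n}),u_{n}\rangle\leq (p-q)\|u_{n}\|_{\mu,p}^{p}$, and then rule out $\|u_{n}\|_{\mu,p}\to 0$ by interpolation, Sobolev embedding, the Nehari identity, and the uniform lower bound $\|u\|_{\mu}\geq\delta_{0}$ on $\N_{\mu}$. Your route is more elementary: it exploits the two-exponent structure $p<q$ directly, needs neither the concentration-compactness dichotomy nor Fatou's lemma, and does not even require the \emph{strict} monotonicity in $(f_{5})$ (non-decreasing suffices, since the negativity comes from $(p-q)\|u_{n}\|_{\mu,p}^{p}$). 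Conversely, the paper's Fatou argument is the one that would survive if $p=q$. One small attribution slip: the uniform lower bound on $\N_{\mu}$ that you invoke is not what Proposition \ref{prop1} proves; the relevant estimates are those underlying the mountain-pass geometry (Lemma \ref{lem1}(i)) and the non-autonomous analogue Lemma \ref{lem4}, although the chain \eqref{growthf} plus Sobolev that you cite is indeed what drives all of them, so this is a matter of citation rather than a gap.
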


\begin{proof}
Let $\{u_{n}\}\subset \N_{\mu}$ be such that $\J_{\mu}(u_{n})\ri c$ and $\|\J'_{\mu}(u_{n})\|_{*}=o_{n}(1)$, where $\|\J'_{\mu}(u)\|_{*}$ denotes the norm of the derivative of the restriction of $\J_{\mu}$ to $\N_{\mu}$ at $u$. Then, there exists $\{\la_{n}\}\subset \R$ such that
\begin{align}\label{nonumb2a}
\J'_{\mu}(u_{n})= \la_{n} \I'_{\mu}(u_{n})+ o_{n}(1),
\end{align}
where $\I_{\mu}: \X_{\mu}\ri \R$ is defined as
\begin{align*}
\I_{\mu}(u)= \|u\|_{\mu, p}^{p}+ \|u\|_{\mu, q}^{q}- \int_{\R^{N}} f(u)u\, dx.
\end{align*}
From $\{u_{n}\}\subset \N_{\mu}$ and $(f_{5})$ it follows that
\begin{align}\label{nonumb1a}
 \langle \I'_{\mu}(u_{n}), u_{n} \rangle
=& [u_{n}]_{s, p}^{p}+ [u_{n}]_{s, q}^{q} +\mu \int_{\R^{N}} (p|u_{n}|^{p}+q|u_{n}|^{q}) \, dx \nonumber\\
& - \int_{\R^{N}} f(u_{n})u_{n}\, dx - \int_{\R^{N}} f'(u_{n})|u_{n}|^{2}\, dx\nonumber \\
\leq& - \int_{\R^{N}} \left( f'(u_{n})|u_{n}|^{2}-(q-1) f(u_{n})u_{n} \right)\, dx.
\end{align}
Since $\{u_{n}\}$ is bounded and $\|u_{n}\|_{\mu}\not\ri 0$, by Proposition \ref{prop1} there exists a sequence $\{y_{n}\}\subset \R^{N}$ such that $\tilde{u}_{n}= u_{n}(\cdot +y_{n})$ is bounded in $\X_{\mu}$ and $\tilde{u}_{n}\rightharpoonup \tilde{u}$ in $\X$ for some $\tilde{u}\neq 0$. Consequently, there exists $\Omega \subset \R^{N}$ with positive measure such that $\tilde{u}>0$ in $\Omega$.

Suppose by contradiction that
$$
\limsup_{n\ri \infty} \, \langle \I'_{\mu}(u_{n}), u_{n}\rangle=0.
$$
Then, using \eqref{nonumb1a}, $(f_{5})$ and Fatou's Lemma we have
\begin{align*}
0\leq - \int_{\Omega} \left( f'(\tilde{u})|\tilde{u}|^{2}-(q-1) f(\tilde{u})\tilde{u} \right)\, dx < 0.
\end{align*}
which gives a contradiction. Hence $\limsup_{n\ri \infty} \langle \I'_{\mu}(u_{n}), u_{n}\rangle<0$ and, as a consequence, $\la_{n}=o_{n}(1)$. This and \eqref{nonumb2a} imply that $\{u_{n}\}$ is a $(PS)_{c}$ sequence for $\J_{\mu}$ in $\X_{\mu}$.
\end{proof}

We end this section giving the proof of the existence of a nonnegative ground state solution for autonomous problem \eqref{Pmu}.

\begin{proposition}\label{prop2}
Assume that $(f_{1})$-$(f_{5})$ hold. Then, problem \eqref{Pmu} has a nonnegative ground state solution.
\end{proposition}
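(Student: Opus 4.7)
The plan is to realize $c_\mu$ as the energy of a nontrivial critical point of $\J_\mu$, via a translated weak-limit argument.

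First I would construct a $(PS)_{c_\mu}$ sequence on $\N_\mu$. By Lemma \ref{lem2}, $c_\mu = \inf_{\N_\mu} \J_\mu$, and $\N_\mu$ is a $\C^1$-manifold (an immediate consequence of $(f_5)$, which forces $\langle \I'_\mu(u), u\rangle < 0$ for every $u \in \N_\mu$, so that $\N_\mu$ is a natural constraint). Applying Ekeland's variational principle to $\J_\mu|_{\N_\mu}$ produces $\{u_n\} \subset \N_\mu$ with $\J_\mu(u_n) \to c_\mu$ and $\|\J_\mu'(u_n)\|_* \to 0$. By Corollary \ref{cor2.4} this is a $(PS)_{c_\mu}$ sequence for $\J_\mu$ on the whole space $\X_\mu$, and Lemma \ref{lem3} guarantees boundedness. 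Since $\J_\mu(u_n) \to c_\mu > 0$, we cannot have $u_n \to 0$ in $\X_\mu$, so Proposition \ref{prop1} gives $\{y_n\} \subset \R^N$ and $R,\beta>0$ with $\liminf_n \int_{\B_R(y_n)} |u_n|^q \, dx \geq \beta$.

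Next I would translate: set $\tilde u_n(x) = u_n(x+y_n)$. Since $\J_\mu$ is translation invariant, $\{\tilde u_n\}$ is still a bounded $(PS)_{c_\mu}$ sequence in $\X_\mu$, and the non-vanishing condition forces any weak limit $\tilde u$ to be nontrivial. By reflexivity, up to a subsequence, $\tilde u_n \rightharpoonup \tilde u$ in $\X_\mu$, $\tilde u_n \to \tilde u$ in $L^t_{\mathrm{loc}}(\R^N)$ for $t \in [p,\q)$, and pointwise a.e. To conclude $\J'_\mu(\tilde u) = 0$, I would test against arbitrary $\varphi \in \C^\infty_c(\R^N)$: the nonlocal terms pass to the limit by the Brezis-Lieb-type decomposition in Lemma \ref{lemVince} together with the weak convergence, and the terms $\int V(\e x)|\tilde u_n|^{t-2} \tilde u_n \varphi \, dx$ and $\int f(\tilde u_n)\varphi\, dx$ pass to the limit by dominated convergence combined with the compact embeddings on $\supp \varphi$ (using $(f_2)$-$(f_3)$). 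By density, $\J'_\mu(\tilde u) = 0$, so $\tilde u \in \N_\mu$ and thus $\J_\mu(\tilde u) \geq c_\mu$.

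For the reverse inequality I would exploit the functional identity
\begin{equation*}
\J_\mu(u) - \tfrac{1}{q}\langle \J_\mu'(u), u\rangle = \Bigl(\tfrac{1}{p}-\tfrac{1}{q}\Bigr)\|u\|_{\mu,p}^{p} + \int_{\R^N} \Bigl(\tfrac{1}{q} f(u)u - F(u)\Bigr) dx,
\end{equation*}
where assumption $(f_5)$ guarantees that $H(t) := \tfrac{1}{q}f(t)t - F(t)$ is nonnegative and nondecreasing for $t\geq 0$ (since $H'(t) = \tfrac{t^q}{q}\bigl(f(t)/t^{q-1}\bigr)'$). Applying Fatou's lemma to the translated sequence $\{\tilde u_n\}$ yields
\begin{equation*}
c_\mu = \lim_{n\to\infty}\Bigl[\J_\mu(\tilde u_n) - \tfrac{1}{q}\langle \J_\mu'(\tilde u_n), \tilde u_n\rangle\Bigr] \geq \Bigl(\tfrac{1}{p}-\tfrac{1}{q}\Bigr)\|\tilde u\|_{\mu,p}^{p} + \int_{\R^N} H(\tilde u) \, dx = \J_\mu(\tilde u),
\end{equation*}
so $\J_\mu(\tilde u) = c_\mu$. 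Finally, to obtain nonnegativity I would test $\J_\mu'(\tilde u) = 0$ against $\tilde u^- := \min(\tilde u, 0)$. Using the elementary monotonicity inequalities for the fractional $p$- and $q$-Laplacians, namely $|a-b|^{t-2}(a-b)(a^- - b^-) \geq |a^- - b^-|^{t}$, together with $(f_1)$ which kills the nonlinear term $f(\tilde u)\tilde u^- = 0$, one deduces $\|\tilde u^-\|_{\mu,p}^p + \|\tilde u^-\|_{\mu,q}^q \leq 0$, hence $\tilde u \geq 0$, so that $\tilde u$ is the desired nonnegative ground state.

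The main obstacle I expect is step two — passing to the limit in the nonlinear fractional operators to verify $\J_\mu'(\tilde u) = 0$ and in the nonlinearity $\int f(\tilde u_n)\varphi\,dx \to \int f(\tilde u)\varphi\,dx$; the estimates in Lemma \ref{lemVince} together with Lemma \ref{lem7}(iv) are precisely tailored for this and will do the job, but care must be taken with the fact that the weak convergence alone does not give $L^p$-strong convergence on $\R^N$.
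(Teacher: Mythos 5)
Your proof is correct and follows essentially the same route as the paper: Ekeland's principle on $\N_\mu$ plus Corollary~\ref{cor2.4} to get a free $(PS)_{c_\mu}$ sequence, Lemma~\ref{lem3} for boundedness, Proposition~\ref{prop1} and translation to secure a nontrivial weak limit, Fatou applied to $\J_\mu - \tfrac{1}{q}\langle\J'_\mu,\cdot\rangle$ for the energy identity, and testing with $u^-$ for nonnegativity. Two small technical remarks, neither of which undermines the argument. First, you invoke Proposition~\ref{prop1} immediately after noting $u_n \not\to 0$, but that proposition is stated under the hypothesis $u_n \rightharpoonup 0$, which you have not verified (and which may fail); the paper avoids this by first passing to the weak limit $u$ of the original sequence, showing $\J'_\mu(u)=0$, and only translating in the residual case $u=0$. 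Your conclusion still holds, since when $u_n\rightharpoonup u\neq 0$ the non-vanishing is automatic with $y_n=0$, but the case split should be made explicit. Second, for passing to the limit in the nonlocal terms the paper uses the simpler observation that $h_n=\frac{|u_n(x)-u_n(y)|^{p-2}(u_n(x)-u_n(y))}{|x-y|^{(N+sp)/p'}}$ is bounded in $L^{p'}(\R^{2N})$ and converges a.e., hence weakly, so $\iint h_n\,g\,dxdy\to\iint h\,g\,dxdy$ for each $g\in L^p(\R^{2N})$; Lemma~\ref{lemVince} (a Brezis--Lieb splitting for the nonlocal operator) is not really what is needed here and is more than the step requires, though combining it with the weak convergence as you suggest does work. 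These are presentational rather than substantive differences.
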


\begin{proof}
Applying the Ekeland variational principle \cite{Ekeland}, there exist sequences $\{u_{n}\}\subset \N_{\mu}$ and $\{\la_{n}\}\subset \R$ such that
\begin{equation*}
\J_{\mu}(u_{n})\ri c_{\mu} \quad \mbox{ and } \quad \|\J'_{\mu}(u_{n})- \la_{n} \I'_{\mu}(u_{n})\|_{\X'}\ri 0
\end{equation*}
where $\I(u)= \langle \J'(u), u\rangle$ for any $u\in \X_{\mu}$.

Following the proof of Corollary \ref{cor2.4}, we can see that $\la_{n}=o_{n}(1)$, so $\{u_{n}\}\subset \N_{\mu}$ is a $(PS)_{c_{\mu}}$ sequence. From Lemma \ref{lem3} $\{u_{n}\}$ is bounded in $\X_{\mu}$, which is a reflexive space, so we may assume that $u_{n}\rightharpoonup u$ in $\X_{\mu}$ for some $u\in \X_{\mu}$.

In what follows, we show that $\J_{\mu}'(u)=0$. Consider the sequence
$$
h_{n}(x,y)=\frac{|u_{n}(x)-u_{n}(y)|^{p-2}(u_{n}(x)-u_{n}(y))}{|x-y|^{\frac{N+sp}{p'}}},
$$
and let
$$
h(x,y)=\frac{|u(x)-u(y)|^{p-2}(u(x)-u(y))}{|x-y|^{\frac{N+sp}{p'}}},
$$
where $p'=\frac{p}{p-1}$. It is easy to check that $\{h_{n}\}$ is a bounded sequence in $L^{p'}(\R^{2N})$ with $h_{n}\rightarrow h$ a.e. in $\R^{2N}$. Since $L^{p'}(\R^{2N})$ is a reflexive space, there exists a subsequence, still denoted by $\{h_{n}\}$, such that $h_{n}\rightharpoonup h$ in $L^{p'}(\R^{2N})$, that is
$$
\iint_{\R^{2N}} h_{n}(x,y) g(x,y) dx dy\rightarrow \iint_{\R^{2N}} h(x,y) g(x,y) dx dy \quad \forall g\in L^{p}(\R^{2N}).
$$
Then, for any $\phi\in \X_{\mu}$, we know that
$$
g(x,y)=\frac{(\phi(x)-\phi(y))}{|x-y|^{\frac{N+sp}{p}}}\in L^{p}(\R^{2N}),
$$
and we can see that
\begin{align*}
\iint_{\R^{2N}} &\frac{|u_{n}(x)-u_{n}(y)|^{p-2}(u_{n}(x)-u_{n}(y))(\phi(x)-\phi(y))}{|x-y|^{N+sp}} dx dy \\
&\rightarrow \iint_{\R^{2N}} \frac{|u(x)-u(y)|^{p-2}(u(x)-u(y))(\phi(x)-\phi(y))}{|x-y|^{N+sp}} dx dy.
\end{align*}
In a similar way we can prove that
\begin{align*}
\iint_{\R^{2N}} &\frac{|u_{n}(x)-u_{n}(y)|^{q-2}(u_{n}(x)-u_{n}(y))(\phi(x)-\phi(y))}{|x-y|^{N+sq}} dx dy \\
&\rightarrow \iint_{\R^{2N}} \frac{|u(x)-u(y)|^{q-2}(u(x)-u(y))(\phi(x)-\phi(y))}{|x-y|^{N+sq}} dx dy.
\end{align*}
Since it is clear that
$$
\int_{\R^{N}} |u_{n}|^{p-2}u_{n}\phi\, dx\rightarrow \int_{\R^{N}} |u|^{p-2}u\phi\, dx,
$$
$$
\int_{\R^{N}} |u_{n}|^{q-2}u_{n}\phi\, dx\rightarrow \int_{\R^{N}} |u|^{q-2}u\phi\, dx,
$$
$$
\int_{\R^{N}} f(u_{n})\phi \,dx\rightarrow \int_{\R^{N}} f(u)\phi \,dx,
$$
and using the fact that $\langle \J_{\mu}'(u_{n}),\phi \rangle=o_{n}(1)$, we can deduce that $\langle \J_{\mu}'(u),\phi \rangle=0$ which implies that $u$ is a critical point of $\J_{\mu}$.

Now, if $u\neq 0$, then $u$ is a nontrivial solution to \eqref{Pmu}.
Assume that $u=0$. Then $\|u_{n}\|_{\mu}\not \ri 0$ in $\X_{\mu}$. Hence, by Proposition \ref{prop1} there exist a sequence $\{y_{n}\}\subset \R^{N}$ and constants $R, \beta>0$ such that
\begin{align}\label{tv1}
\liminf_{n\ri \infty} \int_{\B_{R}(y_{n})} |u_{n}|^{q} dx \geq \beta>0.
\end{align}
Now, let us define
\begin{align*}
\tilde{v}_{n}(x):=u_{n}(x+y_{n}).
\end{align*}
From the invariance by translations of $\R^{N}$, it is clear that $\|\tilde{v}_{n}\|_{\mu}=\|u_{n}\|_{\mu}$, so $\{\tilde{v}_{n}\}$ is bounded in $\X_{\mu}$ and there exists $\tilde{v}$ such that $\tilde{v}_{n}\rightharpoonup \tilde{v}$ in $\X_{\mu}$, $\tilde{v}_{n}\ri \tilde{v}$ in $L^{m}_{loc}(\R^{N})$ for any $m\in [1, \q)$ and $\tilde{v}\neq 0$ in view of \eqref{tv1}. Moreover, $\J_{\mu}(\tilde{v}_{n})= \J_{\mu}(u_{n})$ and $\J'_{\mu}(\tilde{v}_{n})=o_{n}(1)$, and arguing as before it is easy to check that $\J_{\mu}'(\tilde{v})=0$.

Now let $u$ be the solution obtained from the previous study, and we prove that $u$ is a ground state solution. It is clear that $c_{\mu}\leq \J_{\mu}(u)$. On the other hand, from Fatou's Lemma we can see that
\begin{align*}
\J_{\mu}(u)= \J_{\mu}(u)- \frac{1}{q}\langle \J_{\mu}'(u), u\rangle \leq \liminf_{n\ri \infty} \left[\J_{\mu}(u_{n})- \frac{1}{q}\langle \J_{\mu}'(u_{n}), u_{n}\rangle\right]= c_{\mu},
\end{align*}
which implies that $c_{\mu}=\J_{\mu}(u)$.

Finally we prove that the ground state obtained before is nonnegative. Indeed, taking $u^{-}= \min \{u, 0\}$ as test function in \eqref{Pmu}, and exploiting $(f_{1})$ and the following inequality
\begin{align*}
|x- y|^{t-2} (x- y) (x^{-} - y^{-}) \geq |x^{-}- y^{-}|^{t} \quad \forall t\geq 1,
\end{align*}
we can see that
\begin{align*}
\|u^{-}\|_{\mu, p}^{p} + \|u^{-}\|_{\mu, q}^{q} \leq 0
\end{align*}
which implies that $u^{-}=0$, that is $u\geq 0$ in $\R^{N}$.
\end{proof}

Arguing as before, we can deduce the next result.
\begin{corollary}\label{cor3.2}
Let $\{u_{n}\}\subset \N_{\mu}$ be a sequence satisfying $\J_{\mu}(u_{n})\ri c_{\mu}$ with $u_{n}\rightharpoonup u$ in $\X_{\mu}$. If $u\neq 0$, then $u_{n}\ri u$ in $\X_{\mu}$ for some sequence. Otherwise, there exists a sequence $\{\tilde{y}_{n}\}\subset \R^{N}$ such that, up to a subsequence, $\tilde{v}_{n}(x)= u_{n}(x+ \tilde{y}_{n})$ strongly converges in $\X_{\mu}$.
\end{corollary}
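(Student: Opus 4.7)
The plan is to mimic the proof of Proposition \ref{prop2} and upgrade its weak-convergence conclusion to strong convergence via the Brezis-Lieb type decompositions of Section \ref{Sect2}. By Ekeland's variational principle applied to $\J_\mu|_{\N_\mu}$ at level $c_\mu$, I may assume $\{u_n\}$ is a $(PS)_{c_\mu}$ sequence for $\J_\mu|_{\N_\mu}$, and Corollary \ref{cor2.4} promotes it to a $(PS)_{c_\mu}$ sequence for $\J_\mu$ on $\X_\mu$. Lemma \ref{lem3} gives boundedness, so along a subsequence $u_n \rightharpoonup u$ in the reflexive space $\X_\mu$; passing to the weak limit in $\langle \J'_\mu(u_n), \varphi\rangle = o_n(1)$ exactly as in Proposition \ref{prop2} (using weak convergence in $L^{p'}(\R^{2N})$ of the kernels) shows that $\J'_\mu(u)=0$.

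Suppose first $u \neq 0$, so $u \in \N_\mu$ and $\J_\mu(u) \geq c_\mu$. Using the identity
\begin{equation*}
\J_\mu(w) - \frac{1}{q}\langle \J'_\mu(w), w\rangle = \Bigl(\frac{1}{p}-\frac{1}{q}\Bigr)\|w\|_{\mu, p}^{p} + \int_{\R^N}\Bigl(\frac{1}{q} f(w)w - F(w)\Bigr)dx,
\end{equation*}
whose integrand is non-negative by $(f_5)$ (equivalently $F(t)/t^q$ is non-decreasing), Fatou's lemma and lower semicontinuity of $\|\cdot\|_{\mu,p}$ give $\J_\mu(u) \leq c_\mu$, hence $\J_\mu(u)=c_\mu$. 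Setting $v_n = u_n - u \rightharpoonup 0$, Lemmas \ref{lemPSY} and \ref{lem7}, together with Lemma \ref{lemVince} applied to both fractional Laplacians, yield $\J_\mu(v_n) \to 0$ and $\J'_\mu(v_n) \to 0$ in $\X'_\mu$. Arguing by contradiction, if $v_n \not\to 0$ in $\X_\mu$ then $v_n$ cannot vanish in the sense of Lemma \ref{lemVT}: vanishing would force $\int f(v_n) v_n\, dx \to 0$, and then $\J_\mu(v_n) \to 0$ combined with $\langle \J'_\mu(v_n), v_n\rangle \to 0$ would yield $\|v_n\|_\mu \to 0$, a contradiction. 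Hence there exist $y_n \in \R^N$ and $R, \beta > 0$ with $\int_{\B_R(y_n)} |v_n|^q\, dx \geq \beta$; since $v_n \rightharpoonup 0$, necessarily $|y_n| \to \infty$. The translates $v_n(\cdot + y_n)$ converge weakly to some $\tilde v \neq 0$, which by the translation invariance of $\J_\mu$ is a nontrivial critical point, so $\J_\mu(\tilde v) \geq c_\mu > 0$; but the same Fatou representation applied to $\J_\mu(\tilde v_n) - \frac{1}{q}\langle \J'_\mu(\tilde v_n), \tilde v_n\rangle \to 0$ forces $\J_\mu(\tilde v)\leq 0$, contradicting $c_\mu>0$.

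If instead $u = 0$, then $u_n \not\to 0$ strongly (otherwise $\J_\mu(u_n)\to 0\neq c_\mu$), so Proposition \ref{prop1} supplies $\{\tilde y_n\} \subset \R^N$ with $\liminf_n \int_{\B_R(\tilde y_n)} |u_n|^q\, dx \geq \beta$. Translation invariance keeps $\tilde v_n := u_n(\cdot + \tilde y_n)$ on $\N_\mu$ at the same energy $c_\mu$ and on the same $(PS)$ level, and its weak limit $\tilde v$ is non-zero by the concentration inequality; applying the first case to $\{\tilde v_n\}$ delivers the required strong convergence $\tilde v_n \to \tilde v$ in $\X_\mu$. The principal obstacle in this scheme is the Brezis-Lieb splitting $\J'_\mu(u_n) = \J'_\mu(u) + \J'_\mu(v_n) + o_n(1)$ in $\X'_\mu$ for the nonlocal $p$- and $q$-Laplacian terms, supplied by Lemma \ref{lemVince} with its separate treatment of $p \geq 2$ and $1<p<2$, together with Lemma \ref{lem7}(iv) for the nonlinearity; once this decomposition is in hand, the concentration-compactness dichotomy closes both cases.
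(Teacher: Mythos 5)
Your proof is correct and, since the paper itself dispatches Corollary \ref{cor3.2} with the single phrase ``arguing as before,'' it is essentially a faithful reconstruction of the intended argument: Ekeland plus Corollary \ref{cor2.4} upgrades the minimizing sequence to a genuine $(PS)_{c_\mu}$ sequence, the kernel-weak-convergence argument of Proposition \ref{prop2} yields $\J'_\mu(u)=0$, the Brezis--Lieb machinery (Lemmas \ref{lemPSY}, \ref{lem7}, \ref{lemVince}) produces $\J_\mu(v_n)\to 0$ and $\J'_\mu(v_n)\to 0$, and the dichotomy of Proposition \ref{prop1} together with the Fatou identity $\J_\mu(w)-\tfrac1q\langle\J'_\mu(w),w\rangle\ge 0$ rules out a non-vanishing escaping bubble. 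One small remark: once you know $\|v_n\|_{\mu,p}\to 0$ (which follows from the Fatou representation and the corresponding Brezis--Lieb splitting alone), you could also close the argument without invoking Proposition \ref{prop1}: interpolation between $L^{p}$ and $L^{\q}$ gives $|v_n|_r\to 0$ for the $r\in(q,\q)$ of $(f_3)$, hence $\int f(v_n)v_n\,dx\to 0$, and then $\langle\J'_\mu(v_n),v_n\rangle\to 0$ forces $\|v_n\|_{\mu,q}\to 0$ as well; this is marginally shorter but leads to the same conclusion.
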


\section{The non autonomous problem} \label{Sect4}
\noindent
In this section we deal with the following problem
\begin{equation}\label{Pe}
(-\Delta)_{p}^{s}u+ (-\Delta)_{q}^{s}u + V(\e x) (|u|^{p-2}u + |u|^{q-2}u)= f(u) \mbox{ in } \R^{N}.
\end{equation}
The corresponding functional is given by
\begin{align*}
\J_{\e}(u)= \frac{1}{p}[u]_{s, p}^{p} + \frac{1}{q}[u]_{s, q}^{q} + \int_{\R^{N}} V(\e x) \left( \frac{1}{p}|u|^{p}+\frac{1}{q}|u|^{q}\right) \,dx -\int_{\R^{N}} F(u) \, dx,
\end{align*}
which is well-defined on the space
\begin{align*}
\X_{\e}=\left \{u\in W^{s, p}(\R^{N})\cap W^{s, q}(\R^{N}) : \int_{\R^{N}} V(\e x) \left(|u|^{p}+|u|^{q}\right) \,dx<\infty \right \}
\end{align*}
endowed with the norm
\begin{align*}
\|u\|_{\e}= \|u\|_{V, p} + \|u\|_{V, q},
\end{align*}
where
\begin{align*}
\|u\|_{V, t}= \left([u]_{s, t}^{t} + \int_{\R^{N}} V(\e x) |u|^{t}\, dx\right)^{\frac{1}{t}} \quad \mbox{ for all } t>1.
\end{align*}
It is easy to check that $\J_{\e}\in \C^{1}(\X_{\e}, \R)$ and its differential is given by
\begin{align*}
\langle \J'_{\e}(u), \varphi \rangle &= \iint_{\R^{2N}} \frac{|u(x)-u(y)|^{p-2}(u(x)-u(y))(\varphi(x)- \varphi(y))}{|x-y|^{N+sp}}\, dxdy \\
&\quad + \iint_{\R^{2N}} \frac{|u(x)-u(y)|^{q-2}(u(x)-u(y))(\varphi(x)- \varphi(y))}{|x-y|^{N+sq}}\, dxdy \\
&\quad +\int_{\R^{N}}V(\e x) |u|^{p-2}u\,\varphi \, dx+ \int_{\R^{N}} V(\e x)|u|^{q-2}u\,\varphi \, dx   - \int_{\R^{N}} f(u)\varphi \, dx
\end{align*}
for any $u, \varphi \in \X_{\e}$.

Arguing as in Lemma \ref{lem1} we have that $\J_{\e}$ has a mountain pass geometry, and using the mountain pass theorem without $(PS)$ condition (see \cite{W})
there exists a $(PS)_{c_{\e}}$ sequence $\{u_{n}\}\subset \X_{\e}$ for $\J_{\e}$, where $c_{\e}$ is the minimax level associated to $\J_{\e}$. Moreover, proceeding as in Lemma \ref{lem3} we can see that $\{u_{n}\}$ is bounded in $\X_{\e}$. As in Lemma \ref{lem2} we can note that $c_{\e}$ has the following characterization
\begin{align*}
c_{\e}= \inf_{u\in \X_{\e}\setminus \{0\}} \sup_{t\geq 0} \J_{\e}(tu)= \inf_{u\in \N_{\e}} \J_{\e}(u),
\end{align*}
where $\N_{\e}=\{u\in \X_{\e}\setminus \{0\} : \langle \J'_{\e}(u), u\rangle =0\}$.

\begin{lemma}\label{lem4}
For all $u\in \N_{\e}$ there is a constant $\kappa>0$, independent of $\e$, such that
%exists a positive constant $\kappa$, independent of $\e$, such that
\begin{align*}
\|u\|_{\e}\geq \kappa.
\end{align*}
\end{lemma}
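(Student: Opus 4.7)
The plan is to exploit the Nehari identity together with the subcritical growth of $f$ to produce a coercive lower bound. Since $u \in \mathcal{N}_\e$, testing $\langle \mathcal{J}_\e'(u), u\rangle = 0$ yields
\begin{equation*}
\|u\|_{V,p}^{p} + \|u\|_{V,q}^{q} = [u]_{s,p}^{p} + [u]_{s,q}^{q} + \int_{\R^N} V(\e x)\bigl(|u|^{p}+|u|^{q}\bigr)\,dx = \int_{\R^N} f(u)\,u\,dx.
\end{equation*}

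First I would invoke $(f_2)$ and $(f_3)$ to get, for any $\xi>0$, a constant $C_\xi>0$ such that $|f(t)t|\le \xi|t|^{p}+C_\xi |t|^{r}$. Inserting into the identity and using the key structural fact $V(\e x)\ge V_0>0$ (which is uniform in $\e$), the term $\xi|u|_p^p$ can be absorbed into $\|u\|_{V,p}^p$ by choosing, say, $\xi = V_0/2$:
\begin{equation*}
\tfrac{1}{2}\|u\|_{V,p}^{p} + \|u\|_{V,q}^{q} \;\le\; C_\xi\, |u|_r^{r}.
\end{equation*}

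Next I would use the continuous embedding $W^{s,q}(\R^N)\hookrightarrow L^r(\R^N)$ given by Theorem~\ref{Sembedding} (note $r\in(q,q_s^*)$ by $(f_3)$), combined with the trivial bound $|u|_q^q \le V_0^{-1}\int V(\e x)|u|^q\,dx$, to obtain $|u|_r \le C\,\|u\|_{V,q}$ with a constant $C$ depending only on $N,s,q,r,V_0$, hence independent of $\e$. Therefore
\begin{equation*}
\|u\|_{V,q}^{q} \;\le\; \tfrac{1}{2}\|u\|_{V,p}^{p} + \|u\|_{V,q}^{q} \;\le\; C\,\|u\|_{V,q}^{r}.
\end{equation*}
Since $u\ne 0$ forces $\|u\|_{V,q}>0$ and $r>q$, one concludes $\|u\|_{V,q} \ge \kappa_0 := C^{-1/(r-q)}$, and consequently $\|u\|_\e \ge \|u\|_{V,q} \ge \kappa_0=:\kappa$.

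The only subtlety that requires care is the uniformity in $\e$: this is guaranteed because both (i) the embedding constant in Theorem~\ref{Sembedding} is purely geometric and (ii) the absorption step relies only on the uniform positive lower bound $V_0$ on the potential from \eqref{V0}, neither of which depend on the parameter $\e$. No appeal to the specific behaviour of $V$ near infinity is needed, so the resulting $\kappa$ is genuinely $\e$-free.
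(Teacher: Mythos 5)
Your proof is correct, but it follows a genuinely different route from the paper's. You argue directly from the Nehari constraint: you write $\|u\|_{V,p}^p+\|u\|_{V,q}^q=\int f(u)u\,dx$, bound the right-hand side by $\xi|u|_p^p+C_\xi|u|_r^r$ using $(f_2)$--$(f_3)$, absorb the $\xi|u|_p^p$ term via the uniform bound $V\ge V_0$, and then use the Sobolev embedding $W^{s,q}(\R^N)\hookrightarrow L^r(\R^N)$ to reach $\|u\|_{V,q}^q\le C\|u\|_{V,q}^r$, which forces $\|u\|_{V,q}\ge C^{-1/(r-q)}$ since $r>q$. The paper instead argues at the level of the functional: using $F\ge 0$ it bounds $\J_\e(u)$ from above by $C_3(\|u\|_\e^p+\|u\|_\e^q)$, invokes the monotonicity $c_{V_0}\le c_\e\le\J_\e(u)$, and when $\|u\|_\e\le 1$ deduces $\|u\|_\e\ge(c_{V_0}/2C_3)^{1/p}$. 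Your argument is more elementary and self-contained, resting only on Sobolev embedding and the structure of the Nehari constraint; the paper's is shorter but relies on the previously established positivity of the autonomous mountain-pass level $c_{V_0}$. Both produce a constant that is manifestly $\e$-independent. One minor remark: your absorption step requires $C_\xi$ to be fixed once $\xi=V_0/2$ is chosen, which is fine since $V_0$ does not depend on $\e$; you handle this correctly.
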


\begin{proof}
Since $f(t)\geq 0$ for all $t\geq 0$, we can see that
\begin{align}\label{tv2}
c_{\e}\leq \J_{\e}(u)&\leq \frac{1}{p}[u]_{s, p}^{p} + \frac{1}{q}[u]_{s, q}^{q} + \int_{\R^{N}} V(\e x) \left( \frac{1}{p}|u|_{p}^{p}+\frac{1}{q}|u|_{q}^{q}\right) \,dx\nonumber \\
&\leq C_{1} \|u\|_{V, p}^{p} + C_{2} \|u\|_{V, q}^{q} \leq C_{3} (\|u\|_{\e}^{p} + \|u\|_{\e}^{q}).
\end{align}
If $\|u\|_{\e}\geq 1$ for all $u\in \N_{\e}$, then the proof is done. Otherwise, if there exists $u\in \N_{\e}$ for which $\|u\|_{\e}\leq 1$, from \eqref{tv2} it follows that $c_{\e}\leq 2C_{3}\|u\|_{\e}^{p}$. Now, we observe that $\J_{V_{0}}(tu)\leq \J_{\e}(tu)$ for all $u\in \X_{\e}$, which gives $c_{V_{0}}\leq c_{\e}$. Therefore, we have $\left( \frac{c_{V_{0}}}{2C_{3}}\right)^{\frac{1}{p}} \leq \|u\|_{\e}$.
\end{proof}

Arguing as in the proof of Proposition \ref{prop1} it is possible to prove the following result.
\begin{proposition}\label{prop3}
Let $\{u_{n}\}\subset \N_{\e}$ be such that $\J_{\e}(u_{n})\ri c$ with $u_{n}\rightharpoonup 0$ in $\X_{\e}$. Then, one of the following alternatives occurs:
\begin{compactenum}[$(a)$]
\item $u_{n}\ri 0$ in $\X_{\e}$;
\item there are a sequence $\{y_{n}\}\subset \R^{N}$ and constants $R, \beta>0$ such that
\begin{align*}
\liminf_{n\ri \infty} \int_{\B_{R}(y_{n})} |u_{n}|^{q} dx \geq \beta>0.
\end{align*}
\end{compactenum}
\end{proposition}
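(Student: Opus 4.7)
The plan is to follow the blueprint of Proposition \ref{prop1}, with the constant potential $\mu$ replaced by the weight $V(\e x)$. The key structural fact that lets the argument go through unchanged is the uniform lower bound $V(\e x) \geq V_0 > 0$ provided by \eqref{V0}, which plays the role previously enjoyed by $\mu$.

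Assuming alternative $(b)$ fails, I would first note that
\begin{equation*}
\lim_{n\to\infty} \sup_{y\in\R^N} \int_{\B_R(y)} |u_n|^q \, dx = 0 \quad \text{for every } R > 0.
\end{equation*}
Since $\{u_n\}$ is bounded in $\X_\e$, and hence in $W^{s,q}(\R^N)$ by Lemma \ref{embedding}, Lemma \ref{lemVT} applied with $p$ replaced by $q$ (using $sq < N$) yields $u_n \to 0$ strongly in $L^t(\R^N)$ for every $t \in (q, \q)$. In particular, for the exponent $r \in (q, \q)$ furnished by $(f_3)$ one obtains $|u_n|_r \to 0$.

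Next, I would use $u_n \in \N_\e$ together with the pointwise bound $f(t) t \leq \xi |t|^p + C_\xi |t|^r$ (which follows from $(f_2)$ and $(f_3)$ for any chosen $\xi > 0$) to deduce
\begin{equation*}
0 = \langle \J_\e'(u_n), u_n \rangle \geq [u_n]_{s,p}^p + [u_n]_{s,q}^q + (V_0 - \xi) |u_n|_p^p + V_0 |u_n|_q^q - C_\xi |u_n|_r^r.
\end{equation*}
Fixing $\xi \in (0, V_0)$ makes the first four terms jointly comparable to $\|u_n\|_{V,p}^p + \|u_n\|_{V,q}^q$, while the last tends to $0$ by the previous step. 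Hence $\|u_n\|_\e \to 0$, which is alternative $(a)$.

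I do not anticipate any real obstacle: the only change relative to Proposition \ref{prop1} is that the role played by $\mu$ in absorbing the $\xi |u_n|_p^p$ term from $(f_2)$ is now played by the constant $V_0 > 0$, and selecting $\xi < V_0$ makes this harmless. No translation-invariance or concentration-compactness machinery is required at this stage, since the dichotomy we need (vanishing vs.\ non-vanishing of local $L^q$ mass) is exactly what Lemma \ref{lemVT} is designed to rule out.
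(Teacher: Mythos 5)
Your proof follows exactly the route the paper has in mind: the paper gives no separate argument for this result and simply says to repeat the proof of Proposition \ref{prop1}, and your adaptation via the lower bound $V(\e x)\geq V_0>0$ is the right one.

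One small step is imprecise and worth tightening. After the inequality
\begin{align*}
0 = \langle \J'_{\e}(u_{n}), u_{n}\rangle \geq [u_{n}]_{s,p}^{p} + [u_{n}]_{s,q}^{q} + (V_{0}-\xi)|u_{n}|_{p}^{p} + V_{0}|u_{n}|_{q}^{q} - C_{\xi}|u_{n}|_{r}^{r},
\end{align*}
you claim that, once $\xi\in(0,V_{0})$ is fixed, the four positive terms on the right are ``jointly comparable to $\|u_{n}\|_{V,p}^{p}+\|u_{n}\|_{V,q}^{q}$.'' That is not true in general: the potential $V$ need not be bounded from above (indeed the paper repeatedly treats $V_{\infty}=\infty$), so $\int V(\e x)|u_{n}|^{t}\,dx$ may dominate $|u_{n}|_{t}^{t}$ by an arbitrarily large factor, and the four terms only control the $W^{s,p}\cap W^{s,q}$ norm, not the $\X_{\e}$ norm. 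What does give the desired conclusion is the Nehari identity itself: from $u_{n}\in\N_{\e}$ one has
\begin{align*}
\|u_{n}\|_{V,p}^{p}+\|u_{n}\|_{V,q}^{q} = \int_{\R^{N}} f(u_{n})u_{n}\,dx \leq \xi|u_{n}|_{p}^{p}+C_{\xi}|u_{n}|_{r}^{r},
\end{align*}
and since $|u_{n}|_{p}$ is bounded (by weak convergence) and $|u_{n}|_{r}\ri 0$ (by your vanishing step and Lemma \ref{lemVT}), letting $\xi\ri 0$ gives $\|u_{n}\|_{\e}\ri 0$ directly. With that substitution your argument is complete and matches the paper's.
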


Now we show the next lemma which will be useful to understand for which levels the functional $\J_{\e}$ verifies the Palais-Smale condition.
\begin{lemma}\label{lem5}
Assume that $V_{\infty}<\infty$ and let $\{v_{n}\}\subset \X_{\e}$ be a $(PS)_{c}$ sequence for $\J_{\e}$ with $v_{n}\rightharpoonup 0$ in $\X_{\e}$. If $v_{n}\not \ri 0$ in $\X_{\e}$, then $c\geq c_{V_{\infty}}$.
\end{lemma}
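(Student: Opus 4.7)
The plan is to use a translation argument (in the spirit of Proposition \ref{prop3}) to extract a subsequence concentrating at infinity, produce a nontrivial weak limit $w$ that is a critical point of the autonomous limit functional $\J_{V_{\infty}}$, and then transfer the energy bound through a Nehari identity together with $(f_{5})$.

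Arguing as in the proof of Proposition \ref{prop3} (which adapts since $\langle\J_{\e}'(v_{n}),v_{n}\rangle\to 0$ instead of being identically zero), the failure of $v_{n}\to 0$ in $\X_{\e}$ together with Lemma \ref{lemVT} produces $\{y_{n}\}\subset\R^{N}$ and constants $R,\beta>0$ with $\liminf_{n}\int_{\B_{R}(y_{n})}|v_{n}|^{q}\,dx\geq\beta$. Moreover $|y_{n}|\to\infty$: otherwise, along a subsequence $y_{n}\to y_{0}\in\R^{N}$, and then $\B_{R}(y_{n})\subset\B_{R+|y_{0}|+1}(0)$ eventually, so the compact embedding $\X_{\e}\hookrightarrow L^{q}_{loc}(\R^{N})$ from Lemma \ref{embedding}, combined with $v_{n}\rightharpoonup 0$ in $\X_{\e}$, would force $\int_{\B_{R}(y_{n})}|v_{n}|^{q}\,dx\to 0$, contradicting the lower bound. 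Set $w_{n}(x):=v_{n}(x+y_{n})$; by translation invariance of the fractional seminorms and $L^{t}$-norms together with $V\geq V_{0}>0$, the sequence $\{w_{n}\}$ is bounded in $W^{s,p}(\R^{N})\cap W^{s,q}(\R^{N})$, so up to a subsequence $w_{n}\rightharpoonup w$ weakly and a.e., with $w\neq 0$ inherited from the mass in $\B_{R}(0)$.

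Next I argue that $w$ is a critical point of $\J_{V_{\infty}}$. For any $\varphi\in\C^{\infty}_{c}(\R^{N})$, I test $\langle\J_{\e}'(v_{n}),\varphi(\cdot-y_{n})\rangle=o_{n}(1)$ and change variables $x\mapsto x+y_{n}$. The nonlocal $p$- and $q$-Laplacian terms pass to the limit exactly as in the proof of Proposition \ref{prop2} (via weak convergence in $L^{p'}(\R^{2N})$ and $L^{q'}(\R^{2N})$ of the fractional kernels of $w_{n}$), and the nonlinearity converges on $\supp\varphi$ by compactness and the growth bounds $(f_{2})$--$(f_{3})$. For the potential term I invoke $(V_{0})$: given $\delta>0$ there is $R_{\delta}$ with $V(x)\geq V_{\infty}-\delta$ for $|x|>R_{\delta}$, and since $|y_{n}|\to\infty$ one has $V(\e(\cdot+y_{n}))\geq V_{\infty}-\delta$ uniformly on $\supp\varphi$ for $n$ large; passing to a further subsequence so that $V(\e(\cdot+y_{n}))$ converges locally uniformly on $\supp\varphi$ to a limit $V_{*}\geq V_{\infty}-\delta$, Dominated Convergence yields the limit equation, and letting $\delta\to 0$ I obtain $\langle\J_{V_{\infty}}'(w),\varphi\rangle=0$. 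Hence $w\in\N_{V_{\infty}}$.

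With $w\in\N_{V_{\infty}}$ in hand, I conclude via the Nehari identity. Writing $c+o_{n}(1)=\J_{\e}(v_{n})-\frac{1}{q}\langle\J_{\e}'(v_{n}),v_{n}\rangle$ and translating,
\begin{align*}
c+o_{n}(1)=\left(\frac{1}{p}-\frac{1}{q}\right)\!\!\left([w_{n}]_{s,p}^{p}+\int_{\R^{N}} V(\e(x+y_{n}))|w_{n}|^{p}\,dx\right)+\int_{\R^{N}}\!\!\left[\frac{1}{q}f(w_{n})w_{n}-F(w_{n})\right]dx.
\end{align*}
By $(f_{5})$ the integrand $\frac{1}{q}f(t)t-F(t)$ is nonnegative for $t>0$ (integrate the pointwise bound $f(s)\leq \frac{f(t)}{t^{q-1}}s^{q-1}$ from $0$ to $t$), so Fatou's lemma applies to the last integral; weak lower semicontinuity of $[\cdot]_{s,p}^{p}$ and the Fatou-type estimate $\liminf_{n}\int V(\e(x+y_{n}))|w_{n}|^{p}\,dx\geq V_{\infty}\int|w|^{p}\,dx$ (from the $\delta$-approximation) then give
\begin{align*}
c\geq\left(\frac{1}{p}-\frac{1}{q}\right)\!\!\left([w]_{s,p}^{p}+V_{\infty}|w|_{p}^{p}\right)+\int_{\R^{N}}\!\!\left[\frac{1}{q}f(w)w-F(w)\right]dx=\J_{V_{\infty}}(w)\geq c_{V_{\infty}},
\end{align*}
where the middle equality uses $\langle\J_{V_{\infty}}'(w),w\rangle=0$ and the last inequality is the characterization of $c_{V_{\infty}}$ via Lemma \ref{lem2}. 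The main obstacle lies in the third paragraph: only the \emph{liminf} condition in $(V_{0})$ is available, so $V(\e(\cdot+y_{n}))$ does not literally converge on $\supp\varphi$; this is handled by the $\delta$-approximation above and an extraction argument relying on the continuity of $V$ and its boundedness along the subsequence on the compact set $\supp\varphi$.
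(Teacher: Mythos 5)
Your approach is genuinely different from the paper's, but it has a gap that the paper's approach is designed to avoid.

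You attempt a concentration--compactness argument: translate by a sequence $\{y_n\}$ escaping to infinity, identify the weak limit $w\neq 0$ as a critical point of the limit functional $\J_{V_\infty}$, and then use the Nehari identity. The paper instead scales $v_n$ into $\N_{V_\infty}$ (choosing $t_n$ with $t_nv_n\in\N_{V_\infty}$), shows $\limsup t_n\le 1$, and compares $\J_\e(v_n)$ with $\J_{V_\infty}(t_nv_n)\ge c_{V_\infty}$, splitting into the cases $t_n\to 1$ and $t_n\to t_0<1$. The paper's device only ever uses the one-sided bound $V(\e x)\ge V_\infty-\zeta$ for $|x|$ large, which is exactly what $(V_0)$ supplies.

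The gap in your argument lies in the third paragraph, and you half-see it yourself. Condition $(V_0)$ gives $\liminf_{|x|\to\infty}V(x)=V_\infty$, \emph{not} a limit. So on $\supp\varphi$ the translated potential $V(\e(\cdot+y_n))$ does not converge to the constant $V_\infty$; even granting enough control to extract a locally uniform limit $V_*$ along a subsequence (which itself needs boundedness of $V$ at infinity --- again not guaranteed by $(V_0)$, and also needed to make $\|\varphi(\cdot-y_n)\|_\e$ bounded so that the PS condition applies to this test function), all you can say is $V_*\ge V_\infty$. Letting $\delta\to 0$ does not change the fixed function $V_*$; it only upgrades $V_*\ge V_\infty-\delta$ to $V_*\ge V_\infty$. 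Thus what you actually obtain is $\langle\J_{V_*}'(w),\varphi\rangle=0$, so $w\in\N_{V_*}$, and the assertion $w\in\N_{V_\infty}$ used in the final display (both the equality ``$=\J_{V_\infty}(w)$'' and the inequality ``$\ge c_{V_\infty}$'') is not justified. The hole is repairable: from $\langle\J_{V_*}'(w),w\rangle=0$ and $V_*\ge V_\infty$ one gets $\langle\J_{V_\infty}'(w),w\rangle\le 0$, so by Lemma \ref{lem2} and $(f_5)$ the Nehari scaling $t_0$ of $w$ satisfies $t_0\le 1$; since $t\mapsto\left(\tfrac1p-\tfrac1q\right)t^p\|w\|_{V_\infty,p}^p+\int_{\R^N}\left(\tfrac1q f(tw)tw-F(tw)\right)dx$ is nondecreasing, your lower bound for $c$ dominates the value at $t_0$, which equals $\J_{V_\infty}(t_0w)\ge c_{V_\infty}$. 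But you must add this bridge, and you must also address the implicit boundedness of $V$ --- neither of which is needed in the paper's scaling argument.
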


\begin{proof}
Let $\{t_{n}\}\subset (0, \infty)$ be such that $\{t_{n}v_{n}\}\subset \N_{V_{\infty}}$. \\
{\bf Claim 1:} Our aim is to show that $\limsup_{n\ri \infty} t_{n}\leq 1$. \\
Assume by contradiction that there exist $\delta>0$ and a subsequence, denoted again by $\{t_{n}\}$, such that
\begin{equation}\label{tv8}
t_{n}\geq 1+\delta \quad \mbox{ for any } n\in \mathbb{N}.
\end{equation}
Since $\{v_{n}\}\subset \X_{\e}$ is a bounded $(PS)$ sequence for $\J_{\e}$, we have that $\langle \J'_{\e}(v_{n}), v_{n}\rangle =o_{n}(1)$, or equivalently
\begin{align}\label{tv3}
[v_{n}]_{s, p}^{p} + [v_{n}]_{s, q}^{q} + \int_{\R^{N}} V(\e x) |v_{n}|^{p} dx + \int_{\R^{N}} V(\e x) |v_{n}|^{q} dx - \int_{\R^{N}} f(v_{n})v_{n}\, dx = o_{n}(1).
\end{align}
Since $t_{n}v_{n}\in \N_{V_{\infty}}$ we also have that
\begin{align}\label{tv4}
&t_{n}^{p-q}[v_{n}]_{s, p}^{p} + [v_{n}]_{s, q}^{q} + t_{n}^{p-q} V_{\infty} \int_{\R^{N}} |v_{n}|^{p} dx \nonumber\\
& + V_{\infty} \int_{\R^{N}} |v_{n}|^{q} dx - \int_{\R^{N}} \frac{f(t_{n}v_{n})}{(t_{n}v_{n})^{q-1}} |v_{n}|^{q}\, dx =0.
\end{align}
Putting together \eqref{tv3} and \eqref{tv4} we get
\begin{align}\label{tv5}
\int_{\R^{N}} \left(\frac{f(t_{n}v_{n})}{(t_{n}v_{n})^{q-1}}- \frac{f(v_{n})}{(v_{n})^{q-1}}\right) |v_{n}|^{q}\, dx \leq \int_{\R^{N}} (V_{\infty}-V(\e x)) |v_{n}|^{q} dx.
\end{align}
Now, using assumption \eqref{V0} we can see that, given $\zeta>0$ there exists $R=R(\zeta)>0$ such that
\begin{align}\label{hm}
V(\e x) \geq V_{\infty}-\zeta \quad \mbox{ for any } |x|\geq R.
\end{align}
From this, taking into account that $v_{n}\ri 0$ in $L^{q}(\B_{R})$ and the boundedness of $\{v_{n}\}$ in $\X_{\e}$, we can infer
\begin{align}\label{tv6}
\int_{\R^{N}} &(V_{\infty}-V(\e x)) |v_{n}|^{q} dx \nonumber \\
&= \int_{\B_{R}(0)}(V_{\infty}-V(\e x)) |v_{n}|^{q} dx + \int_{\R^{N}\setminus \B_{R}(0)}(V_{\infty}-V(\e x)) |v_{n}|^{q} dx  \nonumber \\
&\leq V_{\infty} \int_{\B_{R}(0)} |v_{n}|^{q} dx + \zeta \int_{\R^{N}\setminus \B_{R}(0)}|v_{n}|^{q} dx \nonumber \\
&\leq o_{n}(1) + \zeta C.
\end{align}
Combining \eqref{tv5} and \eqref{tv6} we have
\begin{equation}\label{tv9}
\int_{\R^{N}} \left(\frac{f(t_{n}v_{n})}{(t_{n}v_{n})^{q-1}}- \frac{f(v_{n})}{(v_{n})^{q-1}}\right) |v_{n}|^{q}\, dx \leq o_{n}(1) + \zeta C.
\end{equation}
Since $v_{n}\not\ri 0$ in $\X_{\e}$, we can apply Proposition \ref{prop3} to deduce the existence of a sequence $\{y_{n}\}\subset \R^{N}$ and two positive numbers $\bar{R}, \beta$ such that
\begin{align}\label{tv7}
\int_{\B_{\bar{R}}(y_{n})} |v_{n}|^{q}\, dx \geq \beta>0.
\end{align}
Let us consider $\tilde{v}_{n}=v_{n}(x+y_{n})$. Then we may assume that, up to a subsequence, $\tilde{v}_{n}\rightharpoonup \tilde{v}$ in $\X_{\e}$. By \eqref{tv7} there exists $\Omega\subset\R^{N}$ with positive measure and such that $\tilde{v}>0$ in $\Omega$. From \eqref{tv8}, $(f_{4})$ and \eqref{tv9}, we can infer that
\begin{equation*}
0<\int_{\Omega} \left(\frac{f((1+\delta)\tilde{v}_{n})}{((1+\delta)\tilde{v}_{n})^{q-1}}- \frac{f(\tilde{v}_{n})}{(\tilde{v}_{n})^{q-1}}\right) |\tilde{v}_{n}|^{q}\, dx \leq o_{n}(1) + \zeta C.
\end{equation*}
Taking the limit as $n\ri \infty$ and applying Fatou's Lemma, we obtain
\begin{equation*}
0<\int_{\Omega} \left(\frac{f((1+\delta)\tilde{v})}{((1+\delta)\tilde{v})^{q-1}}- \frac{f(\tilde{v})}{(\tilde{v})^{q-1}}\right) |\tilde{v}|^{q}\, dx \leq \zeta C \quad \mbox{ for any } \zeta>0,
\end{equation*}
and this is a contradiction.

Now, we will consider the following cases:

\textsc{Case 1:} Assume that $\limsup_{n\rightarrow \infty} t_{n}=1$. Thus, there exists $\{t_{n}\}$ such that $t_{n}\rightarrow 1$. Taking into account that  $\J_{\e}(v_{n})\rightarrow c$, we have
\begin{align}\label{tv12new}
c+ o_{n}(1)&= \J_{\e}(v_{n})\nonumber \\
&=\J_{\e}(v_{n}) - \J_{V_{\infty}}(t_{n}v_{n})+ \J_{V_{\infty}}(t_{n}v_{n}) \nonumber \\
&\geq \J_{\e}(v_{n}) -\J_{V_{\infty}}(t_{n}v_{n}) + c_{V_{\infty}}.
\end{align}
Let us compute $\J_{\e}(v_{n}) -\J_{V_{\infty}}(t_{n}v_{n})$:
\begin{align}\begin{split}\label{tv12}
&\J_{\e}(v_{n}) -\J_{V_{\infty}}(t_{n}v_{n}) \\
&\quad = \frac{1-t_{n}^{p}}{p} [v_{n}]^{p}_{s, p} + \frac{1-t_{n}^{q}}{q} [v_{n}]^{q}_{s, q} + \frac{1}{p} \int_{\R^{N}} \left( V(\e x) - t_{n}^{p} V_{\infty}\right) |v_{n}|^{p} dx  \\
&\qquad +\frac{1}{q} \int_{\R^{N}} \left( V(\e x) - t_{n}^{q} V_{\infty}\right) |v_{n}|^{q} dx + \int_{\R^{N}} \left( F(t_{n} v_{n}) -F(v_{n}) \right) \, dx.
\end{split} \end{align}
Using condition \eqref{V0},  $v_{n}\rightarrow 0$ in $L^{p}(\B_{R}(0))$, $t_{n}\rightarrow 1$, \eqref{hm}, and
\begin{align*}
V(\e x) \!-\! t_{n}^{p} V_{\infty} =\left(V(\e x) \!-\! V_{\infty} \right)\! +\! (1\!-\! t_{n}^{p}) V_{\infty}\geq \!-\!\zeta \!+\! (1\!-\! t_{n}^{p}) V_{\infty} \ \ \mbox{ for any } |x|\geq R,
\end{align*}
we get
\begin{align}\label{tv13}
&\int_{\R^{N}}  \left( V(\e x) - t_{n}^{p} V_{\infty}\right) |v_{n}|^{p} dx \nonumber \\
&= \int_{\B_{R}(0)} \left( V(\e x) - t_{n}^{p} V_{\infty}\right) |v_{n}|^{p} dx+ \int_{\R^{N}\setminus \B_{R}(0)} \left( V(\e x) - t_{n}^{p} V_{\infty}\right) |v_{n}|^{p} dx \nonumber \\
&\geq (V_{0}\!-\! t_{n}^{p}V_{\infty}) \!\int_{\B_{R}(0)}\! |v_{n}|^{p} dx \!-\! \zeta \int_{\R^{N}\setminus \B_{R}(0)} |v_{n}|^{p} dx\!+\! V_{\infty}(1\!-\!t_{n}^{p}) \int_{\R^{N}\setminus \B_{R}(0)} |v_{n}|^{p} dx \nonumber \\
&\geq o_{n}(1)- \zeta C.
\end{align}
In similar fashion we can prove that
\begin{align}\label{tv131}
\int_{\R^{N}} & \left( V(\e x) - t_{n}^{q} V_{\infty}\right) |v_{n}|^{q} dx \geq o_{n}(1)- \zeta C.
\end{align}
Since $\{v_{n}\}$ is bounded in $\X_{\e}$, we can conclude that
\begin{align}\label{tv14}
\frac{(1-t_{n}^{p})}{p} [v_{n}]^{p}_{s, p}= o_{n}(1) \quad \mbox{ and }\quad \frac{(1-t_{n}^{q})}{q} [v_{n}]^{q}_{s, q}= o_{n}(1).
\end{align}
Thus, putting together \eqref{tv12}, \eqref{tv13}, \eqref{tv131} and \eqref{tv14} we obtain
\begin{align}\label{tv15}
\J_{\e}(v_{n}) -\J_{V_{\infty}}(t_{n}v_{n}) \geq \int_{\R^{N}} \left( F(t_{n} v_{n}) -F(v_{n}) \right) \, dx +o_{n}(1)- \zeta C.
\end{align}
At this point, our aim is to show that
\begin{align}\label{tv16}
\int_{\R^{N}} \left( F(t_{n} v_{n}) -F(v_{n}) \right) \, dx=o_{n}(1).
\end{align}
Applying the Mean Value Theorem and \eqref{growthf} we deduce that
\begin{align*}
\int_{\R^{N}} | F(t_{n} v_{n}) -F(v_{n}) | \, dx \leq C|t_{n}-1|\int_{\R^{N}} |v_{n}|^{p} dx + C|t_{n}-1|\int_{\R^{N}} |v_{n}|^{r} dx.
\end{align*}
Exploiting the boundedness of $\{v_{n}\}$ we get the thesis. Now, gathering \eqref{tv12new}, \eqref{tv15} and \eqref{tv16} we can infer that
\begin{align*}
c+ o_{n}(1)\geq o_{n}(1) - \zeta C + c_{V_{\infty}},
\end{align*}
and, taking the limit as $\zeta \ri 0$ we get $c \geq c_{V_{\infty}}$.

\textsc{Case 2:} Assume that $\limsup_{n\rightarrow \infty} t_{n}=t_{0}<1$. Then, there is a subsequence, still denoted by $\{t_{n}\}$, such that $t_{n}\rightarrow t_{0} (<1)$ and $t_{n}<1$ for any $n\in \mathbb{N}$.
Let us observe that
\begin{align}\label{tv17}
c+o_{n}(1)&= \J_{\e}(v_{n}) - \frac{1}{q}\langle \J'_{\e}(v_{n}), v_{n} \rangle \nonumber \\
&= \left(\frac{1}{p}- \frac{1}{q}\right) \|v_{n}\|_{V, p}^{p}  + \int_{\R^{N}} \left(\frac{1}{q}f(v_{n}) v_{n} - F(v_{n})\right) \,dx.
\end{align}
Exploiting the fact that $t_{n}v_{n}\in \N_{V_{\infty}}$, and using $(f_{5})$ and \eqref{tv17}, we obtain
\begin{align*}
c_{V_{\infty}} &\leq \J_{V_{\infty}}(t_{n}v_{n})  \\
&= \J_{V_{\infty}}(t_{n}v_{n}) - \frac{1}{q} \langle \J'_{V_{\infty}}(t_{n}v_{n}), t_{n}v_{n} \rangle \\
&=\left(\frac{1}{p}- \frac{1}{q}\right) \|t_{n}v_{n}\|_{V, p}^{p} +  \int_{\R^{N}} \left(\frac{1}{q} f(t_{n}v_{n}) t_{n}v_{n}- F(t_{n}v_{n})\right) \, dx \\
&\leq \left(\frac{1}{p}- \frac{1}{q}\right) \|v_{n}\|_{V, p}^{p} + \int_{\R^{N}} \left(\frac{1}{q}f(v_{n}) v_{n} - F(v_{n})\right) \,dx \\
&=c +o_{n}(1).
\end{align*}
Taking the limit as $n\rightarrow \infty$ we get $c\geq c_{V_{\infty}}$.
\end{proof}

Now we show the following compactness result.
\begin{proposition}\label{prop4}
The functional $\J_{\e}$ restricted to $\N_{\e}$ satisfies the $(PS)_{c}$ condition at any level $c\in (0, c_{V_{\infty}})$ if $V_{\infty}<\infty$ and at any level $c\in \R$ if $V_{\infty}= \infty$.
\end{proposition}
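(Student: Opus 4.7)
The plan is to reduce the constrained $(PS)_c$ condition on $\mathcal{N}_{\e}$ to an unconstrained $(PS)_c$ condition on $\X_{\e}$, extract a weak limit that is automatically a critical point, and then rule out loss of mass at infinity by comparing with the limit problem.

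First, given a sequence $\{u_n\}\subset \mathcal{N}_{\e}$ with $\mathcal{J}_{\e}(u_n)\to c$ and $\|\mathcal{J}_{\e}'(u_n)\|_{\ast}=o_n(1)$, I will repeat the Lagrange multiplier argument of Corollary \ref{cor2.4} in the non-autonomous setting: write $\mathcal{J}_{\e}'(u_n)=\lambda_n \mathcal{I}_{\e}'(u_n)+o_n(1)$ with $\mathcal{I}_{\e}(u)=\|u\|_{V,p}^{p}+\|u\|_{V,q}^{q}-\int_{\R^N}f(u)u\,dx$, use the $p\&q$ structure and $(f_5)$ to bound $\langle \mathcal{I}_{\e}'(u_n),u_n\rangle$ away from zero (combining Proposition \ref{prop3} to locate a nontrivial translated weak limit with Fatou's lemma applied to $f'(u_n)u_n^{2}-(q-1)f(u_n)u_n$), and conclude $\lambda_n\to 0$. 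Hence $\{u_n\}$ is a $(PS)_c$ sequence of $\mathcal{J}_{\e}$ on the whole space $\X_{\e}$; the Nehari identity together with the Lemma \ref{lem3} argument (exactly as proved there, only replacing $\mu$ by $V(\e\cdot)$) yields boundedness of $\{u_n\}$ in $\X_{\e}$.

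Up to a subsequence, $u_n\rightharpoonup u$ in $\X_{\e}$. Using the weak continuity of the fractional $t$-Laplacian forms via Lemma \ref{lemVince} and the splitting for the nonlinearity given by Lemma \ref{lem7}$(iv)$ (with any test function $\varphi\in \mathcal{C}^{\infty}_c(\R^N)$), I will pass to the limit in $\langle \mathcal{J}_{\e}'(u_n),\varphi\rangle=o_n(1)$ to get $\mathcal{J}_{\e}'(u)=0$. In particular, by $(f_5)$ one has $\frac{1}{q}f(t)t-F(t)\geq 0$ for $t\geq 0$, so
\begin{equation*}
\mathcal{J}_{\e}(u)=\mathcal{J}_{\e}(u)-\tfrac{1}{q}\langle \mathcal{J}_{\e}'(u),u\rangle = \Bigl(\tfrac{1}{p}-\tfrac{1}{q}\Bigr)\|u\|_{V,p}^{p}+\int_{\R^N}\Bigl(\tfrac{1}{q}f(u)u-F(u)\Bigr)dx\geq 0.
\end{equation*}

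Now set $v_n:=u_n-u$. By Lemma \ref{lemPSY} and Lemma \ref{lem7}$(iii)$ I obtain the Brezis--Lieb decomposition $\mathcal{J}_{\e}(v_n)=\mathcal{J}_{\e}(u_n)-\mathcal{J}_{\e}(u)+o_n(1)\to c-\mathcal{J}_{\e}(u)=:d$, while Lemma \ref{lemVince} (for the two fractional forms), the Brezis--Lieb identity for the potential parts, and Lemma \ref{lem7}$(iv)$ together give $\mathcal{J}_{\e}'(v_n)\to 0$ in $\X_{\e}^{\ast}$. Clearly $v_n\rightharpoonup 0$. If $V_{\infty}<\infty$ and $v_n\not\to 0$ in $\X_{\e}$, then Lemma \ref{lem5} applies to $\{v_n\}$ and forces $d\geq c_{V_{\infty}}$, whence $c\geq \mathcal{J}_{\e}(u)+c_{V_{\infty}}\geq c_{V_{\infty}}$, contradicting $c<c_{V_{\infty}}$; therefore $v_n\to 0$, i.e.\ $u_n\to u$ in $\X_{\e}$. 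If instead $V_{\infty}=\infty$, the compact embedding from Lemma \ref{lem6} gives $v_n\to 0$ in $L^{m}(\R^{N})$ for every $m\in[p,q^{\ast}_s)$; testing $\langle \mathcal{J}_{\e}'(v_n),v_n\rangle=o_n(1)$ with $v_n$ and using $(f_2)$--$(f_3)$ to absorb $\int f(v_n)v_n\,dx=o_n(1)$ yields $\|v_n\|_{\e}\to 0$ at any level $c\in\R$.

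The step I expect to be most delicate is the passage $\mathcal{J}_{\e}'(v_n)\to 0$: the fractional $p\&q$ operators are nonlinear and nonlocal, so the standard pointwise Brezis--Lieb trick does not apply directly to the form $\langle \mathcal{J}_{\e}'(u_n)-\mathcal{J}_{\e}'(u),\varphi\rangle$. This is precisely where Lemma \ref{lemVince} must be used carefully (handling separately the ranges $p\geq 2$ and $1<p<2$, and likewise for $q$) in tandem with Lemma \ref{lem7}$(iv)$ to control the nonlinear term uniformly in test functions $\varphi$ with $\|\varphi\|_{\e}\leq 1$; the rest of the argument, in particular the comparison with $c_{V_{\infty}}$, is then a routine application of Lemma \ref{lem5}.
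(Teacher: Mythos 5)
Your proposal is correct and follows essentially the same route as the paper: the Lagrange multiplier step with $(f_5)$ and Fatou's lemma, the Brezis--Lieb splitting via Lemmas \ref{lemPSY}, \ref{lemVince}, \ref{lem7}, the observation $\J_{\e}(u)\geq 0$, and then Lemma \ref{lem5} (resp.\ Lemma \ref{lem6}) according to whether $V_{\infty}<\infty$ or $V_{\infty}=\infty$. One small imprecision: for the passage $\langle\J_{\e}'(u_n),\varphi\rangle\to\langle\J_{\e}'(u),\varphi\rangle$ one does not need Lemma \ref{lemVince} but rather the weak $L^{p'}(\R^{2N})$-convergence of $h_n(x,y)=|u_n(x)-u_n(y)|^{p-2}(u_n(x)-u_n(y))/|x-y|^{(N+sp)/p'}$ as in the proof of Proposition \ref{prop2}; Lemma \ref{lemVince} is the tool for the splitting $\J_{\e}'(v_n)=o_n(1)$, which you also invoke correctly.
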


\begin{proof}
Let $\{u_{n}\}\subset \N_{\e}$ be such that $\J_{\e}(u_{n})\ri c$ and $\|\J'_{\e}(u_{n})\|_{*}=o_{n}(1)$, where $\|\J'_{\e}(u_{n})\|_{*}$ denotes the norm of the derivative of the restriction of $\J_{\e}$ to $\N_{\e}$ at $u$. Then, there exists $\{\la_{n}\}\subset \R$ such that
\begin{align}\label{nonumb2}
\J'_{\e}(u_{n})= \la_{n} T'_{\e}(u_{n})+ o_{n}(1),
\end{align}
where $T_{\e}: \X_{\e}\ri \R$ is defined as
\begin{align*}
T_{\e}(u)= \|u\|_{V, p}^{p}+ \|u\|_{V, q}^{q}- \int_{\R^{N}} f(u)u\, dx.
\end{align*}
From $\{u_{n}\}\subset \N_{\e}$ and $(f_{5})$ it follows that
\begin{align}\label{nonumb1}
\langle T'_{\e}(u_{n}), u_{n} \rangle &= [u_{n}]_{s, p}^{p}+ [u_{n}]_{s, q}^{q} +\int_{\R^{N}} V(\e x) (p|u_{n}|^{p}+q|u_{n}|^{q}) \, dx \nonumber \\
&\quad  - \int_{\R^{N}} f(u_{n})u_{n}\, dx - \int_{\R^{N}} f'(u_{n})|u_{n}|^{2}\, dx \nonumber \\
&\leq - \int_{\R^{N}} \left( f'(u_{n})|u_{n}|^{2}-(q-1) f(u_{n})u_{n} \right)\, dx.
\end{align}
Since $\{u_{n}\}$ is bounded in $\X_{\e}$ and $\|u_{n}\|_{\e}\not\ri 0$, by Proposition \ref{prop3} there exists a sequence $\{y_{n}\}\subset \R^{N}$ such that $\tilde{u}_{n}= u_{n}(\cdot +y_{n})$ is bounded in $\X_{\e}$ and $\tilde{u}_{n}\rightharpoonup \tilde{u}$ in $\X_{\e}$ for some $\tilde{u}\neq 0$. Therefore, there exists a set $\Omega \subset \R^{N}$ with positive measure such that $\tilde{u}>0$ in $\Omega$.

Suppose by contradiction that
$$
\limsup_{n\ri \infty} \, \langle T'_{\e}(u_{n}), u_{n}\rangle=0.
$$
Then, in the light of \eqref{nonumb1}, $(f_{5})$ and Fatou's Lemma we have
\begin{align*}
0\leq - \int_{\Omega} \left( f'(\tilde{u})|\tilde{u}|^{2}-(q-1) f(\tilde{u})\tilde{u} \right)\, dx < 0.
\end{align*}
which gives a contradiction. Thus $\limsup_{n\ri \infty} \langle T'_{\e}(u_{n}), u_{n}\rangle<0$ and, as a consequence, $\la_{n}=o_{n}(1)$. This and \eqref{nonumb2}, implies that $\{u_{n}\}$ is a $(PS)_{c}$ sequence.

Let $v_{n}=u_{n}-u$. Then, by Lemma \ref{lem7} we can infer that
\begin{align*}
\J_{\e}(v_{n})= \J_{\e}(u_{n}) - \J_{\e}(u) + o_{n}(1)= c- \J_{\e}(u) + o_{n}(1)=: c_{*}+ o_{n}(1)
\end{align*}
and
$$
\J'_{\e}(v_{n})=o_{n}(1).
$$
Now, using $(f_{5})$ we have
\begin{align*}
\J_{\e}(u)&= \J_{\e}(u)- \frac{1}{q}\langle \J'_{\e}(u), u\rangle \\
&=\left(\frac{1}{p}-\frac{1}{q}\right) [u]_{s, p}^{p} + \left(\frac{1}{p}-\frac{1}{q}\right)\int_{\R^{N}} V(\e x) |u|^{p} dx - \int_{\R^{N}} \left( F(u)-\frac{1}{q}f(u)u\right) \, dx\\
&=\left(\frac{1}{p}-\frac{1}{q}\right) \|u\|_{V, p}^{p} - \int_{\R^{N}} \left( F(u)-\frac{1}{q}f(u)u\right) \, dx\\
&\geq \left(\frac{1}{p}-\frac{1}{q}\right) \|u\|_{V, p}^{p} \geq 0.
\end{align*}

If we assume that $V_{\infty}<\infty$, then $c_{*}\leq c <c_{V_{\infty}}$, and applying Lemma \ref{lem5} we can infer $v_{n}\ri 0$ in $\X_{\e}$, that is $u_{n}\ri u$ in $\X_{\e}$.

Now we consider the case $V_{\infty}=\infty$. By Lemma \ref{lem6} we have that $\X_{\e}$ is compactly embedded in $L^{m}(\R^{N})$ for any $p\leq m<\q$. Up to a subsequence, we have that $v_{n}\ri 0$ in $L^{t}(\R^{N})$ for all $t\in [q, \q)$. From \eqref{growthf} we get
\begin{align*}
\int_{\R^{N}} f(v_{n})v_{n} \, dx=o_{n}(1).
\end{align*}
Therefore $\|v_{n}\|_{V, p}^{p}+ \|v_{n}\|_{V, q}^{q}= o_{n}(1)$, which implies that $u_{n}\ri u$ in $\X_{\e}$.
\end{proof}

As a byproduct of the above proof, we have the following result.
\begin{corollary}\label{cor12}
The critical points of $\J_{\e}$ restricted to $\N_{\e}$ are critical points of $\J_{\e}$ on $\X_{\e}$.
\end{corollary}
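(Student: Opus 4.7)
The plan is to invoke the Lagrange multiplier rule on the $\C^{1}$ manifold $\N_{\e}$ and then exploit condition $(f_{5})$ to rule out the multiplier, exactly as was done for Palais--Smale sequences in the proof of Proposition \ref{prop4}.

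First, I would set $T_{\e}(u) := \langle \J'_{\e}(u), u\rangle = \|u\|_{V,p}^{p} + \|u\|_{V,q}^{q} - \int_{\R^{N}} f(u) u\, dx$, so that $\N_{\e} = T_{\e}^{-1}(0) \setminus \{0\}$. If $u \in \N_{\e}$ is a critical point of $\J_{\e}|_{\N_{\e}}$, then by the Lagrange multiplier theorem there exists $\lambda \in \R$ with
\begin{equation*}
\J'_{\e}(u) = \lambda\, T'_{\e}(u).
\end{equation*}
Testing this identity against $u$ itself and using $u \in \N_{\e}$, I obtain $0 = \langle \J'_{\e}(u), u\rangle = \lambda \langle T'_{\e}(u), u\rangle$, so it suffices to prove $\langle T'_{\e}(u), u\rangle \neq 0$.

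Next, I would compute $\langle T'_{\e}(u), u\rangle$ directly. Differentiating $T_{\e}$ and pairing with $u$ yields
\begin{equation*}
\langle T'_{\e}(u), u\rangle = p\,[u]_{s,p}^{p} + q\,[u]_{s,q}^{q} + \int_{\R^{N}} V(\e x)(p|u|^{p} + q|u|^{q})\,dx - \int_{\R^{N}}\bigl(f(u) u + f'(u) u^{2}\bigr)\,dx.
\end{equation*}
Using $\langle \J'_{\e}(u), u\rangle = 0$ to eliminate the seminorm and potential terms in favor of $\int f(u)u$, and keeping the lowest coefficient $p < q$, this bound reduces to
\begin{equation*}
\langle T'_{\e}(u), u\rangle \leq -\int_{\R^{N}} \bigl(f'(u) u^{2} - (q-1) f(u) u\bigr)\,dx,
\end{equation*}
which is exactly the estimate \eqref{nonumb1}.

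Finally, hypothesis $(f_{5})$ says $t \mapsto f(t)/t^{q-1}$ is strictly increasing on $(0,\infty)$; differentiating gives $f'(t) t - (q-1) f(t) > 0$, hence $f'(t) t^{2} - (q-1) f(t) t > 0$ for all $t > 0$. Combined with $(f_{1})$ (so $f \equiv 0$ for $t \leq 0$) and the fact that $u \not\equiv 0$, this forces
\begin{equation*}
\langle T'_{\e}(u), u\rangle < 0.
\end{equation*}
Therefore $\lambda = 0$ and $\J'_{\e}(u) = 0$ in $\X_{\e}^{*}$, which is the desired conclusion. The only mildly delicate point is ensuring that the set where $u > 0$ has positive measure, which follows from $u \in \N_{\e} \subset \X_{\e} \setminus \{0\}$ together with $(f_{1})$: if $u \leq 0$ a.e., then $f(u) \equiv 0$, and the Nehari identity would force $\|u\|_{\e} = 0$, a contradiction.
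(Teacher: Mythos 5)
Your proof is correct and matches the paper's own reasoning: the paper derives this corollary ``as a byproduct'' of the proof of Proposition \ref{prop4}, where exactly the same Lagrange multiplier identity, the estimate \eqref{nonumb1}, and hypothesis $(f_{5})$ are invoked to force the multiplier to vanish for $(PS)$ sequences, and you have carried out the direct pointwise version for a fixed critical point, which dispenses with the translation and Fatou steps. As a minor remark, the strictness $\langle T_{\e}'(u),u\rangle<0$ already follows from the term $(p-q)\|u\|_{V,p}^{p}<0$ that you dropped (using $p<q$, $V\geq V_{0}>0$ and $u\neq 0$), so one need not worry about whether $(f_{5})$ literally yields the pointwise strict inequality $f'(t)t-(q-1)f(t)>0$ rather than just $\geq 0$.
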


The forthcoming result regards the existence of a nonnegative ground state solution to \eqref{Pe} provided that $\e>0$ is small enough.

\begin{theorem}\label{thm41}
Assume that \eqref{V0}, $(f_{1})$-$(f_{5})$ hold. Then, there exists $\bar{\e}>0$ such that problem \eqref{Pe} has a nonnegative ground state solution $u_{\e}$ for all $\e \in (0, \bar{\e})$.
\end{theorem}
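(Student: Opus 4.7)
\medskip

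\noindent\textbf{Proof proposal.} My plan is to apply the mountain pass theorem with the Palais--Smale condition, which in turn requires proving that the minimax level $c_\e$ falls into the compactness range identified in Proposition \ref{prop4}. Since $\J_\e$ already has a mountain pass geometry (argued as in Lemma \ref{lem1}) and $c_\e$ admits the Nehari characterisation recalled after Proposition \ref{prop3}, the existence of a nonnegative ground state will follow once I can establish, for $\e$ small, the strict inequality $c_\e<c_{V_{\infty}}$ (if $V_{\infty}<\infty$) or nothing at all if $V_{\infty}=\infty$, in which case Proposition \ref{prop4} applies at every level.

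\smallskip

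\noindent\emph{Step 1: comparing the autonomous levels.} I first show $c_{V_{0}}<c_{V_{\infty}}$ when $V_{\infty}<\infty$. By Proposition \ref{prop2} there exists a ground state $w_{\infty}\in\N_{V_{\infty}}$ with $\J_{V_{\infty}}(w_{\infty})=c_{V_{\infty}}$. By Lemma \ref{lem2} applied to $\J_{V_{0}}$ there exists $\bar t>0$ with $\bar t w_{\infty}\in\N_{V_{0}}$ and $\J_{V_{0}}(\bar t w_{\infty})=\max_{t\geq 0}\J_{V_{0}}(t w_{\infty})\geq c_{V_{0}}$. Since $V_{0}<V_{\infty}$ and $w_{\infty}\neq 0$, I have the strict inequality
\[
\J_{V_{0}}(\bar t w_{\infty})<\J_{V_{\infty}}(\bar t w_{\infty})\leq\max_{t\geq 0}\J_{V_{\infty}}(t w_{\infty})=\J_{V_{\infty}}(w_{\infty})=c_{V_{\infty}},
\]
so $c_{V_{0}}\leq\J_{V_{0}}(\bar t w_{\infty})<c_{V_{\infty}}$.

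\smallskip

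\noindent\emph{Step 2: showing $\limsup_{\e\to 0}c_{\e}\leq c_{V_{0}}$.} Let $w\in\N_{V_{0}}$ be the nonnegative ground state produced by Proposition \ref{prop2}. Pick $x_{0}\in\R^{N}$ with $V(x_{0})$ arbitrarily close to $V_{0}$ (the infimum, attained or not), and choose the cutoff $\phi_{r}$ from Lemma \ref{Psi}. I set
\[
u_{\e,r}(x):=\phi_{r}(\e x-x_{0})\,w\!\left(x-\tfrac{x_{0}}{\e}\right).
\]
On the support of $u_{\e,r}$ one has $\e x\to x_{0}$ as $\e\to 0$, so $V(\e x)\to V(x_{0})$ pointwise on that support, and the Dominated Convergence Theorem handles the potential and nonlinearity terms. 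The nonlocal seminorms $[u_{\e,r}]_{s,p}^{p}$, $[u_{\e,r}]_{s,q}^{q}$ are, by translation invariance, equal to $[\phi_{r}w]_{s,p}^{p}$, $[\phi_{r}w]_{s,q}^{q}$, which converge to $[w]_{s,p}^{p}$, $[w]_{s,q}^{q}$ as $r\to\infty$ by Lemma \ref{Psi}. Hence
\[
\lim_{r\to\infty}\lim_{\e\to 0}\J_{\e}(u_{\e,r})=\J_{V(x_{0})}(w).
\]
By Lemma \ref{lem2} I can project onto $\N_{\e}$ via a unique scaling $t_{\e,r}>0$ with $t_{\e,r}u_{\e,r}\in\N_{\e}$; a routine argument using $(f_{5})$ shows $t_{\e,r}\to 1$ in the relevant double limit, so
\[
c_{\e}\leq\J_{\e}(t_{\e,r}u_{\e,r})\xrightarrow[\e\to 0,\,r\to\infty]{}\J_{V(x_{0})}(w).
\]
Letting $V(x_{0})\downarrow V_{0}$ (and using continuity of $\mu\mapsto c_{\mu}$, which follows from the same scaling argument) yields $\limsup_{\e\to 0}c_{\e}\leq c_{V_{0}}$.

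\smallskip

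\noindent\emph{Step 3: conclusion and nonnegativity.} Combining Steps 1 and 2, there is $\bar\e>0$ such that $c_{\e}<c_{V_{\infty}}$ for every $\e\in(0,\bar\e)$; when $V_{\infty}=\infty$ the restriction is vacuous. Applying Ekeland's variational principle on $\N_{\e}$ I get a sequence $\{u_{n}\}\subset\N_{\e}$ with $\J_{\e}(u_{n})\to c_{\e}$ and $\|\J_{\e}'(u_{n})\|_{*}\to 0$; by Proposition \ref{prop4} this sequence is relatively compact, and by Corollary \ref{cor12} its limit $u_{\e}$ is a critical point of $\J_{\e}$ in $\X_{\e}$ with $\J_{\e}(u_{\e})=c_{\e}>0$, hence a ground state. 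Finally, testing the equation with $u_{\e}^{-}:=\min\{u_{\e},0\}$, using $(f_{1})$ (so that $f(u_{\e})u_{\e}^{-}=0$) and the pointwise inequality $|a-b|^{t-2}(a-b)(a^{-}-b^{-})\geq|a^{-}-b^{-}|^{t}$ for $t=p,q$ as in Proposition \ref{prop2}, forces $\|u_{\e}^{-}\|_{V,p}^{p}+\|u_{\e}^{-}\|_{V,q}^{q}\leq 0$, hence $u_{\e}\geq 0$.

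\smallskip

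\noindent\emph{Main obstacle.} The delicate step is Step 2: controlling the nonlocal Gagliardo seminorms of the translated truncation $u_{\e,r}$ and, simultaneously, showing that the projection scalar $t_{\e,r}$ stays close to $1$ uniformly. Lemma \ref{Psi} is tailored exactly for the seminorm approximation, while the monotonicity from $(f_{5})$ coupled with uniqueness from Lemma \ref{lem2} is what I will lean on to pin $t_{\e,r}$ near $1$ and thereby pass cleanly from $\J_{\e}(u_{\e,r})$ to $\J_{\e}(t_{\e,r}u_{\e,r})$.
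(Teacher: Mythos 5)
Your proposal is correct in substance, but it follows a genuinely different route from the paper. For the case $V_{\infty}<\infty$ the paper does not pass through the inequality $\limsup_{\e\to 0}c_{\e}\leq c_{V_{0}}$: it picks an intermediate $\mu\in(V_{0},V_{\infty})$, truncates the ground state $w^{\mu}$ of the $\mu$-autonomous problem to obtain a compactly supported $w$ with $\J_{\mu}(w)<c_{V_{\infty}}$, and then uses the pointwise comparison $\J_{\e}(tw)\leq \J_{\mu}(tw)$ (valid for $\e$ small since $V(\e x)\leq\mu$ on $\supp w$) to get $c_{\e}\leq\J_{\mu}(w)<c_{V_{\infty}}$. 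This avoids any analysis of the limit of $c_{\e}$ and in particular avoids tracking the projection scalars $t_{\e,r}$. Your Step~2, by contrast, proves (essentially) the paper's Lemma~\ref{ce}, which the paper states and proves \emph{after} Theorem~\ref{thm41} for other purposes (the concentration and multiplicity results). So your argument buys a stronger intermediate statement at the cost of the technical control you rightly flag in your ``main obstacle'' paragraph; the paper's argument is shorter because it only needs a single well-chosen test function.

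Two small inaccuracies you should repair. First, with $u_{\e,r}(x)=\phi_{r}(\e x-x_{0})\,w(x-x_{0}/\e)$, translation invariance gives $[u_{\e,r}]_{s,p}^{p}=[\phi_{r/\e}\,w]_{s,p}^{p}$, not $[\phi_{r}w]_{s,p}^{p}$: the cutoff lives at scale $r/\e$, not $r$. This still tends to $[w]_{s,p}^{p}$ as $\e\to 0$ by Lemma~\ref{Psi} (since $r/\e\to\infty$), but notice that then the outer limit $r\to\infty$ is superfluous; the role of $r$ here is only to confine $\e x$ to the fixed ball $\B_{2r}(x_{0})$ so that $V(\e x)$ has a bounded dominant. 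Second, you invoke ``continuity of $\mu\mapsto c_{\mu}$'' to let $V(x_{0})\downarrow V_{0}$; this is unnecessary and unproved. Under \eqref{V0} the infimum $V_{0}$ is attained (continuity of $V$ together with $\liminf_{|x|\to\infty}V>V_{0}$ force $\{V\leq V_{0}+\delta\}$ to be compact for small $\delta$), so you may simply take $x_{0}$ with $V(x_{0})=V_{0}$, as the paper does by assuming $V(0)=V_{0}$ without loss of generality.
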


\begin{proof}
It is easy to see that $\J_{\e}$ has a mountain pass geometry. Thus, there exists a bounded sequence $\{u_{n}\}\subset \X_{\e}$ such that
\begin{align*}
\J_{\e}(u_{n})\ri c_{\e} \quad \mbox{ and }\quad \J'_{\e}(u_{n})\ri 0.
\end{align*}
Let us consider the case $V_{\infty}=\infty$. Then from Lemma \ref{lem6}, up to a subsequence, we get $u_{n}\ri u$ in $L^{m}(\R^{N})$ for all $m\in [p, \q)$, for some $u\in \X_{\e}$. Set $v_{n}=u_{n}-u$. From assumptions $(f_{2})$ and $(f_{3})$, and the Dominated Convergence Theorem we can infer
\begin{align*}
\int_{\R^{N}} f(v_{n})v_{n}\, dx=o_{n}(1).
\end{align*}
Then $\|v_{n}\|_{\e}=o_{n}(1)$, that is $u_{n}\ri u$ in $\X_{\e}$ and we can deduce that $\J_{\e}(u)=c_{\e}$ and $\J'_{\e}(u)=0$.

Now, if $V_{\infty}<\infty$, assume without loss of generality that
\begin{align*}
V(0)=V_{0}= \inf_{x\in \R^{N}} V(x).
\end{align*}
Let $\mu \in (V_{0}, V_{\infty})$ and we note that $c_{V_{0}}<c_{\mu}<c_{V_{\infty}}$. Let us prove that there exists a function $w\in \X_{\mu}$ with compact support such that
\begin{align}\label{D1}
\J_{\mu}(w) =\max_{t\geq 0} \J_{\mu}(tw) \quad \mbox{ and }\quad \J_{\mu}(w)<c_{V_{\infty}}.
\end{align}
Let $\psi\in \C^{\infty}_{c}(\R^{N}, [0, 1])$ be such that $\psi=1$ in $\B_{1}(0)$ and $\psi=2$ in $\R^{N}\setminus \B_{2}(0)$. For any $R>0$, we set $\psi_{R}(x)= \psi(\frac{x}{R})$ and we consider the function $w_{R}(x)= \psi_{R}(x)w^{\mu}(x)$, where $w^{\mu}$ is a ground state solution to \eqref{Pmu}. By Lemma \ref{Psi} we can see that
\begin{align}\label{FS1}
\lim_{R\ri \infty} \|w_{R}-w^{\mu}\|_{s, p} + \|w_{R}-w^{\mu}\|_{s, q} =0.
\end{align}
Let $t_{R}>0$ be such that $\J_{\mu}(t_{R}w_{R})= \max_{t\geq 0} \J_{\mu}(tw_{R})$. Then, $t_{R}w_{R}\in \N_{\mu}$.
Now we can see that there exists $\bar{r}>0$ such that $\J_{\mu}(t_{\bar{r}}w_{\bar{r}})<c_{V_{\infty}}$. Indeed, if $\J_{\mu}(t_{R}w_{R}) \geq c_{V_{\infty}}$ for any $R>0$, using $t_{R}w_{R}\in \N_{\mu}$, \eqref{FS1} and $w^{\mu}$ is a ground state, we can deduce that $t_{R}\ri 1$ and
\begin{align*}
c_{V_{\infty}}\leq \liminf_{R\ri \infty} \J_{\mu}(t_{R}w_{R}) = \J_{\mu}(w^{\mu})=c_{\mu},
\end{align*}
which gives a contradiction. Then, taking $w=\psi_{\bar{r}}w^{\mu}$, we can conclude that \eqref{D1} holds true.

Now, from \eqref{V0} it follows that for some $\bar{\e}>0$
\begin{align}\label{D2}
V(\e x)\leq \mu \quad \mbox{ for all } x\in \supp w \mbox{ and } \e \leq \bar{\e}.
\end{align}
Then, in the light of \eqref{D1} and \eqref{D2} we have for all  $\e \leq \bar{\e}$
\begin{align*}
\J_{\e}(tw)\leq \J_{\mu}(tw) \leq \J_{\mu}(w) \quad \mbox{ for all } t>0,
\end{align*}
which implies that
\begin{align*}
\max_{t> 0} \J_{\e}(tw) \leq \J_{\mu}(w).
\end{align*}
This proves that, for $\e$ small enough, $c_{\e}<c_{V_{\infty}}$. We can conclude the proof in view of Proposition \ref{prop4}.
\end{proof}

In what follows we show an interesting relation between $c_{\e}$ and $c_{V_{0}}$.
\begin{lemma}\label{ce}
$\lim_{\e\ri 0} c_{\e}= c_{V_{0}}$.
\end{lemma}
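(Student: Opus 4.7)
The plan is to establish the two inequalities $c_{V_0}\le\liminf_{\e\to 0}c_\e$ and $\limsup_{\e\to 0}c_\e\le c_{V_0}$ separately. The lower bound is immediate from the minimax characterization established in (the non-autonomous analog of) Lemma \ref{lem2}: since $V(\e x)\ge V_0$ for all $x$ and $\e$, we have $\J_\e(u)\ge\J_{V_0}(u)$ for every $u\in\X_\e\subset\X_{V_0}=W^{s,p}(\R^N)\cap W^{s,q}(\R^N)$, and hence
\[
c_\e=\inf_{u\in\X_\e\setminus\{0\}}\sup_{t\ge 0}\J_\e(tu)\ge\inf_{u\in\X_{V_0}\setminus\{0\}}\sup_{t\ge 0}\J_{V_0}(tu)=c_{V_0}.
\]

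For the reverse inequality, I would construct an explicit test function. By \eqref{V0} and continuity of $V$, the infimum $V_0$ is attained at some $x_0\in\R^N$. Let $w^*$ be the ground state of the autonomous problem at level $V_0$ supplied by Proposition \ref{prop2}, and set $w_R=\psi_R w^*$ using the cutoff of Lemma \ref{Psi}, so that $w_R\to w^*$ in $\X_{V_0}$ as $R\to\infty$. Consider the translated test function $\tilde w_{R,\e}(x):=w_R(x-x_0/\e)$. Translation invariance preserves the Gagliardo seminorms $[\tilde w_{R,\e}]_{s,t}=[w_R]_{s,t}$, and the change of variable $y=x-x_0/\e$, combined with the compact support of $w_R$ and continuity of $V$ at $x_0$, yields via dominated convergence
\[
\int_{\R^N}V(\e x)|\tilde w_{R,\e}|^t\, dx=\int_{\R^N}V(\e y+x_0)|w_R(y)|^t\, dy\longrightarrow V_0\int_{\R^N}|w_R|^t\, dy
\]
as $\e\to 0$, for $t\in\{p,q\}$. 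Consequently $\J_\e(\tau\tilde w_{R,\e})\to\J_{V_0}(\tau w_R)$ uniformly in $\tau$ on compact subsets of $[0,\infty)$.

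Let $t_{R,\e}>0$ be the unique scaling with $t_{R,\e}\tilde w_{R,\e}\in\N_\e$, so that $c_\e\le\J_\e(t_{R,\e}\tilde w_{R,\e})$, and let $t_R>0$ be the analogous scaling for $w_R$ in $\N_{V_0}$. The hard part will be to show that, for each fixed $R$, the family $\{t_{R,\e}\}_\e$ stays bounded and bounded away from zero as $\e\to 0$: the lower bound follows from Lemma \ref{lem4} (which bounds elements of $\N_\e$ uniformly away from $0$, independently of $\e$) together with the uniform upper bound on $\|\tilde w_{R,\e}\|_\e$; the upper bound comes from dividing the Nehari identity for $t_{R,\e}\tilde w_{R,\e}$ by $t_{R,\e}^q$, using $p<q$ and $(f_4)$, so that unboundedness of $t_{R,\e}$ would force the right-hand side to diverge while the left remains bounded. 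Passing to a convergent subsequence $t_{R,\e}\to t_R^*>0$ and to the limit in the Nehari identity gives $t_R^*w_R\in\N_{V_0}$, and the uniqueness part of Lemma \ref{lem2} then forces $t_R^*=t_R$. Hence $\J_\e(t_{R,\e}\tilde w_{R,\e})\to\J_{V_0}(t_R w_R)=\sup_{\tau\ge 0}\J_{V_0}(\tau w_R)$ as $\e\to 0$. Letting $R\to\infty$, $w_R\to w^*$ in $\X_{V_0}$ together with continuity of the fibering map yields $\sup_{\tau\ge 0}\J_{V_0}(\tau w_R)\to\sup_{\tau\ge 0}\J_{V_0}(\tau w^*)=\J_{V_0}(w^*)=c_{V_0}$, which closes the argument.
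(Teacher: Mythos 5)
Your proposal is correct and follows the same strategy as the paper's proof: a truncation $w_R=\psi_R w$ of the $V_0$-ground state (via Lemma \ref{Psi}), projection onto $\N_\e$, control of the scaling parameter $t_{R,\e}$ (bounded away from $0$ and $\infty$ using $(f_2)$/Lemma \ref{lem4} and $(f_4)$ with $p<q$), passage to the limit in $\e$ and then in $R$. The one genuine difference is cosmetic but worth noting: the paper does not translate the test function, but instead relies (implicitly, carrying over from the proof of Theorem \ref{thm41}) on the normalization ``WLOG $V(0)=V_0$'', so that $V(\e x)\to V_0$ on $\supp w_R$. You instead translate by $x_0/\e$ with $V(x_0)=V_0$, which makes the argument self-contained at the small cost of an extra change of variable; both routes give the same limit for $\int V(\e\,\cdot)\,|\cdot|^t$ by dominated convergence, and translation invariance of the Gagliardo seminorm handles the rest. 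Your final step invoking ``continuity of the fibering map'' to pass $R\to\infty$ is exactly what the paper spells out by verifying $t_R\to 1$ and $\J_{V_0}(t_R w_R)\to\J_{V_0}(w)=c_{V_0}$; if you want to match the paper's level of detail, you should make that explicit using $w\in\N_{V_0}$ and $(f_5)$.
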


\begin{proof}
From \eqref{V0}, it is clear that $\displaystyle{\liminf_{\e\ri 0} c_{\e} \geq c_{V_{0}}}$. Now we prove that
$$\limsup_{\e\ri 0} c_{\e} \leq c_{V_{0}}.$$

Let $\psi\in \C^{\infty}_{c}(\R^{N}, [0, 1])$ be such that $\psi=1$ in $\B_{1}(0)$ and $\psi=2$ in $\R^{N}\setminus \B_{2}(0)$. For any $R>0$, we set $\psi_{R}(x)= \psi(\frac{x}{R})$ and we consider the function $w_{R}(x)= \psi_{R}(x)w(x)$, where $w$ is a ground state solution to \eqref{Pmu} with $\mu=V_{0}$. By Lemma \ref{Psi} we can see that
\begin{align}\label{FS11}
\lim_{R\ri \infty} \|w_{R}-w\|_{s, p} + \|w_{R}-w\|_{s, q} =0.
\end{align}
For any $\e, R>0$ let $t_{\e, R}>0$ be such that
$$
\J_{\e}(t_{\e, R}w_{R})= \max_{t\geq 0} \J_{\e}(tw_{R}).
$$
Then,
\begin{align}\label{FS12}
\langle \J_{\e}'(t_{\e,R}w_{R}), t_{\e,R}w_{R} \rangle =0
\end{align}
from which we can deduce that for any $R>0$ it holds
\begin{align}\label{FS13}
0<\lim_{\e\ri 0} t_{\e, R} = t_{R}<\infty.
\end{align}
Indeed,
\begin{align*}
\|w_{R}\|_{V, p}^{p} + t_{\e, R}^{q-p} \,\|w_{R}\|_{V, q}^{q}= \int_{\R^{N}} \frac{f(t_{\e, R} w_{R})}{(t_{\e, R} w_{R})^{p-1}} \,w_{R}^{p}\, dx
\end{align*}
and using $p<q$ and $(f_{2})$ we can see that \eqref{FS13} holds true.

Now, from \eqref{FS12} we know that
\begin{align*}
t_{\e, R}^{p-q}\, \|w_{R}\|_{V, p}^{p} + \|w_{R}\|_{V, q}^{q}= \int_{\R^{N}} \frac{f(t_{\e, R} w_{R})}{(t_{\e, R} w_{R})^{q-1}} \,w_{R}^{q}\, dx
\end{align*}
and taking the  limit as $\e\ri 0$ and exploiting \eqref{FS13}, we find
\begin{align*}
t_{R}^{p-q}\, \|w_{R}\|_{V, p}^{p} + \|w_{R}\|_{V, q}^{q}= \int_{\R^{N}} \frac{f(t_{R} w_{R})}{(t_{R} w_{R})^{q-1}} \,w_{R}^{q}\, dx.
\end{align*}
Letting $R\ri \infty$ and from \eqref{FS11}, $w\in \N_{V_{0}}$ and $(f_{5})$
we can deduce that
\begin{equation}\label{FS14}
\lim_{R\ri \infty} t_{R}=1 \quad \mbox{ and } \quad \J_{V_{0}}(t_{R}w_{R}) = \max_{t\geq 0} \J_{V_{0}} (tw_{R}).
\end{equation}
Consequently we have
\begin{align*}
c_{\e}\leq \max_{t\geq 0} \J_{\e}(tw_{R})= \J_{\e}(t_{\e, R}w_{R}),
\end{align*}
which together with \eqref{FS13} implies that $\limsup_{\e\ri 0} c_{\e} \leq \J_{V_{0}}(t_{R}w_{R})$. Passing to the limit as $R\ri \infty$ and using \eqref{FS11} and \eqref{FS14} we can see that $\limsup_{\e\ri 0} c_{\e} \leq c_{V_{0}}$.
\end{proof}

The next result concerns the concentration phenomenon of solutions to \eqref{P} around the minima points of $V$.

\begin{proposition}\label{prop61}
Let $\e_{n}\ri 0$ and $\{u_{\e_{n}}\}\subset \X_{\e_{n}}$ be such that $\J_{\e_{n}}(u_{\e_{n}})= c_{\e_{n}}$ and $\J_{\e_{n}}'(u_{\e_{n}})=0$. Then, there exists a sequence $\{\tilde{y}_{n}\}\subset \R^{N}$ such that $\tilde{v}_{n}(x)=u_{\e_{n}}(x+ \tilde{y}_{n})$ has a convergent subsequence in $\X_{V_{0}}$. Moreover, up to a subsequence, $y_{n}\ri y$ for some $y\in \R^{N}$ such that $V(y)= V_{0}$, where $y_{n}=\e_{n}\tilde{y}_{n}$, and there exists $C>0$ such that for all $\delta>0$, there exist $\bar{R}>0$ and $n_{0}\in \mathbb{N}$ such that
\begin{align*}
\int_{\R^{N}\setminus \B_{\e_{n}\bar{R}}(y)} f(z_{n})z_{n}\,dx<\e_{n}^{N}\delta \quad \mbox{ and } \quad \int_{\B_{\e_{n}\bar{R}}(y)} f(z_{n})z_{n}\,dx\geq C\e_{n}^{N}
\end{align*}
for all $n\geq n_{0}$, where $z_{n}=u_{n}(\frac{\cdot}{\e_{n}})$.
\end{proposition}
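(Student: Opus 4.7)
The plan is to run a concentration--compactness argument on the sequence of ground states $u_{\e_n}$, anchored by the identity $c_{\e_n} \to c_{V_0}$ from Lemma \ref{ce}. First, boundedness of $\|u_{\e_n}\|_{\e_n}$ uniformly in $n$ follows by repeating the argument of Lemma \ref{lem3}, since the levels $c_{\e_n}$ are uniformly bounded. Combining Lemma \ref{lem4} (uniform lower bound on the Nehari manifold) with Proposition \ref{prop3} applied to the bounded PS sequence $\{u_{\e_n}\} \subset \N_{\e_n}$, alternative $(b)$ must hold: there exist $\tilde y_n \in \R^N$ and $R,\beta>0$ with $\liminf_n \int_{\B_R(\tilde y_n)} |u_{\e_n}|^q \, dx \geq \beta$. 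Setting $\tilde v_n(x) = u_{\e_n}(x + \tilde y_n)$ and $y_n = \e_n \tilde y_n$, the sequence $\tilde v_n$ converges weakly (along a subsequence) to some $\tilde v \not\equiv 0$ in $W^{s,p}(\R^N)\cap W^{s,q}(\R^N)$.

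Next I would locate the concentration point. The shifted $\tilde v_n$ satisfies the equation with potential $V_n(x)=V(\e_n x + y_n)$. If $|y_n|\to\infty$ along a subsequence, then for each fixed $x$ eventually $|\e_n x + y_n|$ is large, so by \eqref{V0} one has $\liminf V_n(x) \geq V_\infty$. Using $(f_5)$, which gives $\tfrac1q f(t)t - F(t)\geq 0$ for $t\geq 0$, and applying Fatou's Lemma to the identity
\begin{align*}
c_{\e_n} = \left(\tfrac{1}{p}-\tfrac{1}{q}\right)\|\tilde v_n\|_{V_n,p}^p + \int_{\R^N}\left(\tfrac{1}{q}f(\tilde v_n)\tilde v_n - F(\tilde v_n)\right) dx,
\end{align*}
I would obtain $c_{V_0}\geq \J_{V_\infty}(\tilde v) - \tfrac1q \langle \J'_{V_\infty}(\tilde v),\tilde v\rangle \geq c_{V_\infty}$, contradicting $c_{V_\infty}>c_{V_0}$ (the comparison follows from Lemma \ref{lem2} combined with the strict monotonicity in $\mu$ coming from $(f_5)$); the case $V_\infty=\infty$ is handled by Lemma \ref{lem6}, forcing $\tilde v\equiv 0$, a contradiction. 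Hence $\{y_n\}$ is bounded, $y_n\to y$, and the same Fatou argument with $V(y)$ in place of $V_\infty$ gives $c_{V_0}\geq c_{V(y)}$, so $V(y)=V_0$ and $\tilde v$ is a nontrivial critical point of $\J_{V_0}$ at level $c_{V_0}$, i.e.\ a ground state. Strong convergence $\tilde v_n \to \tilde v$ in $\X_{V_0}$ then follows from Corollary \ref{cor3.2} applied to the translated sequence (with potentials $V_n\to V_0$).

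Finally, for the concentration estimates, the substitution $z_n(x)=u_{\e_n}(x/\e_n)$ followed by $\xi = x/\e_n - \tilde y_n$ gives
\begin{align*}
\int_{\B_{\e_n\bar R}(y_n)} f(z_n)z_n\, dx = \e_n^N \int_{\B_{\bar R}(0)} f(\tilde v_n)\tilde v_n\, d\xi,
\end{align*}
which by strong convergence and $(f_2)$--$(f_3)$ tends to $\e_n^N \int_{\B_{\bar R}(0)} f(\tilde v)\tilde v\, d\xi$. For $\bar R$ large, $\int_{\B_{\bar R}(0)} f(\tilde v)\tilde v \geq 2C>0$ since $\tilde v\not\equiv 0$, yielding the lower bound with constant $C$. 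For the complement, the Nehari identity $\int_{\R^N} f(\tilde v)\tilde v = \|\tilde v\|_{V_0,p}^p + \|\tilde v\|_{V_0,q}^q<\infty$, together with \eqref{growthf}, the $L^p\cap L^{\q}$ tail smallness of $\tilde v$, and the strong convergence in $\X_{V_0}$, gives the bound $\e_n^N\delta$ for $\bar R$ large. Because $y_n\to y$, balls centered at $y_n$ can be replaced by balls at $y$ with a negligible adjustment of $\bar R$.

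The main obstacle I expect is the second step, namely ruling out $|y_n|\to\infty$ and pinning down $V(y)=V_0$: one must extract from the weak limit a critical point of the limit functional whose energy, by Fatou, is majorized by $c_{V_0}$, and then compare with $c_{V_\infty}$ (respectively $c_{V(y)}$). The argument hinges on the strict monotonicity $\mu\mapsto c_\mu$ pulled from $(f_5)$ and Lemma \ref{lem2}, and on careful Fatou bookkeeping for potentials $V_n$ that depend both on $\e_n$ and on the (possibly diverging) centers $y_n$.
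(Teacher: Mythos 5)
Your overall skeleton (concentration via Proposition \ref{prop3}, ruling out $|y_n|\to\infty$, extracting the limit profile, change of variables for the tail estimates) is the right one, but there are two genuine gaps.

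First, in the step that rules out $|y_n|\to\infty$ when $V_\infty<\infty$: you pass from $c_{V_0}\geq \J_{V_\infty}(\tilde v)-\tfrac1q\langle\J'_{V_\infty}(\tilde v),\tilde v\rangle$ (which Fatou does give you, since $(\tfrac1p-\tfrac1q)\|\cdot\|_{V_n,p}^p\geq0$ and $\tfrac1q f(t)t-F(t)\geq0$) to the claim that this quantity is $\geq c_{V_\infty}$. That last inequality is not automatic: it would follow if $\tilde v$ were a nontrivial critical point of $\J_{V_\infty}$, but you cannot deduce that here, because hypothesis \eqref{V0} gives only $\liminf_{|x|\to\infty}V(x)=V_\infty$, not a pointwise limit, so you cannot pass to the limit in the shifted equation with $V(\e_n x+y_n)$ to get $\J'_{V_\infty}(\tilde v)=0$. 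And for generic small nontrivial $\tilde v$ the quantity $\J_{V_\infty}(\tilde v)-\tfrac1q\langle\J'_{V_\infty}(\tilde v),\tilde v\rangle$ is strictly below $c_{V_\infty}$, so the inequality is simply false without further information on $\tilde v$.

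Second, your invocation of Corollary \ref{cor3.2} ``applied to the translated sequence with potentials $V_n\to V_0$'' is not licensed by that result: the Corollary concerns a sequence lying on the \emph{fixed} Nehari manifold $\N_\mu$, whereas $\tilde v_n$ satisfies a Nehari identity with a potential depending on $n$. The device the paper uses to bridge this gap — and which is missing from your proposal — is to choose $t_n>0$ so that $v_n:=t_n\tilde v_n\in\N_{V_0}$, and then squeeze
\[
c_{V_0}\leq \J_{V_0}(v_n)\leq \max_{t\geq0}\J_{\e_n}(tu_n)=\J_{\e_n}(u_n)=c_{\e_n}\to c_{V_0},
\]
using only $V_0\leq V(\e_n\cdot)$. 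This puts $\{v_n\}$ squarely in the hypotheses of Corollary \ref{cor3.2}, gives strong convergence $v_n\to v$ and hence $\tilde v_n\to\tilde v$, and — crucially — establishing strong convergence \emph{before} the boundedness argument lets you rule out $|y_n|\to\infty$ by a clean chain $c_{V_0}\leq\J_{V_0}(v)<\J_{V_\infty}(v)\leq\liminf\J_{\e_n}(t_n u_n)\leq c_{V_0}$ that never needs $\tilde v$ to be a critical point of $\J_{V_\infty}$. Once strong convergence is in hand, your final change-of-variable computation for the integrals over balls and complements is correct.
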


\begin{proof}
For simplicity, we set $u_{n}:=u_{\e_{n}}$. In view of Lemma \ref{ce} and arguing as in Proposition \ref{prop1} we can find a sequence $\{\tilde{y}_{n}\}$ and constants $R, \beta>0$ such that
\begin{equation*}
\liminf_{n\ri \infty} \int_{\B_{R}(\tilde{y}_{n})} |u_{n}|^{q} dx \geq \beta >0.
\end{equation*}
Since $\J_{\e_{n}}(u_{n})= c_{\e_{n}}$ and $\J_{\e_{n}}'(u_{n})=0$, and by Lemma \ref{ce}, we deduce that $\{u_{n}\}$ is bounded in $\X_{V_{0}}$. Set $\tilde{v}_{n}(x)=u_{n}(x+ \tilde{y}_{n})$. Then, $\{\tilde{v}_{n}\}$ is bounded in $\X_{V_{0}}$ and we may assume that $\tilde{v}_{n}\rightharpoonup \tilde{v}$ in $\X_{V_{0}}$ for some $\tilde{v}\not \equiv 0$. Now, let $t_{n}>0$ be such that $v_{n}= t_{n}\tilde{v}_{n}\in \N_{V_{0}}$ and we define $y_{n}= \e_{n} \tilde{y}_{n}$.
Therefore
\begin{align*}
c_{V_{0}} &\leq \J_{V_{0}}(v_{n}) \\
&\leq \frac{1}{p}[v_{n}]_{s, p}^{p} \!+\! \frac{1}{q} [v_{n}]_{s, q}^{q}\! +\! \int_{\R^{N}} V(\e_{n} x \!+\! y_{n}) \left(\frac{1}{p}|v_{n}|^{p} \!+\!\frac{1}{q} |v_{n}|^{q}\right) dx\! -\!\int_{\R^{N}} \! F(v_{n})dx\\
&\leq \frac{t_{n}^{p}}{p}[u_{n}]_{s, p}^{p} \!+\! \frac{t_{n}^{q}}{q} [u_{n}]_{s, q}^{q} \!+\! \int_{\R^{N}}\! V(\e_{n} x) \left(\frac{t_{n}^{p}}{p}|u_{n}|^{p} \!+\!\frac{t_{n}^{q}}{q} |u_{n}|^{q}\right) dx \!-\!\int_{\R^{N}} \!F(t_{n} u_{n}) dx\\
&\leq \max_{t\geq 0} \J_{\e_{n}} (tu_{n}) = \J_{\e_{n}} (u_{n})= c_{V_{0}}+ o_{n}(1),
\end{align*}
which implies
\begin{equation}\label{AK5.31}
\J_{V_{0}}(v_{n})\ri c_{V_{0}} \quad \mbox{ and } \quad \{v_{n}\}\subset \N_{V_{0}}.
\end{equation}
Moreover, $\{v_{n}\}$ is bounded, and we may assume that $v_{n}\rightharpoonup v$ in $\X_{V_{0}}$. Since $\|v_{n}\|_{V_{0}}\not \ri 0$ and $\{v_{n}\}$ is bounded, there exists $t_{0}>0$ such that $t_{n}\ri t_{0}$. In the light of \eqref{AK5.31} and Corollary \ref{cor3.2} we can see that $v_{n}\ri t_{0}\tilde{v}= v$ in $\X_{V_{0}}$, which yields $\tilde{v}_{n}\ri \tilde{v}$ in $\X_{V_{0}}$.

In what follows, we show that $y_{n}\ri y$ for some $y\in \R^{N}$ such that $V(y)= V_{0}$. Firstly, we prove that $\{y_{n}\}$ is bounded in $\R^{N}$. We argue by contradiction, and assume, up to subsequence, that $|y_{n}|\ri +\infty$.

Consider the case $V_{\infty}=\infty$. Since $\langle \J_{\e_{n}}'(u_{n}), u_{n}\rangle =0$, we can see that
\begin{align*}
\int_{\R^{N}} &V(\e_{n} x + y_{n}) \left(|\tilde{v}_{n}|^{p} +|\tilde{v}_{n}|^{q}\right) \, dx \\
&\leq [\tilde{v}_{n}]_{s, p}^{p}+ [\tilde{v}_{n}]_{s, q}^{q} + \int_{\R^{N}} V(\e_{n} x + y_{n}) \left(|\tilde{v}_{n}|^{p} +|\tilde{v}_{n}|^{q}\right) \, dx = \int_{\R^{N}} f(\tilde{v}_{n})\tilde{v}_{n}\, dx.
\end{align*}
Taking into account \eqref{V0}, $\tilde{v}_{n}\ri \tilde{v}$ in $\X_{V_{0}}$, $(f_{2})$-$(f_{3})$ and using Fatou's Lemma, we obtain
\begin{align*}
+\infty &= \liminf_{n\ri \infty} \int_{\R^{N}} V(\e_{n} x + y_{n}) \left(|\tilde{v}_{n}|^{p} +|\tilde{v}_{n}|^{q}\right) \, dx \\
&\leq \liminf_{n\ri \infty}  \int_{\R^{N}} f(\tilde{v}_{n})\tilde{v}_{n}\, dx = \int_{\R^{N}} f(\tilde{v}) \tilde{v}\, dx <\infty,
\end{align*}
which gives a contradiction.

Now, we suppose $V_{\infty}<\infty$. Then, by Fatou's Lemma, \eqref{V0}, $\tilde{v}_{n}\ri \tilde{v}$ in $\X_{V_{0}}$ and a change of variable, we can deduce that
\begin{align*}
c_{V_{0}} \leq& \J_{V_{0}}(v) < \J_{V_{\infty}}(v) \\
\leq &\liminf_{n\ri \infty} \left[\frac{1}{p}[v_{n}]_{s, p}^{p} + \frac{1}{q} [v_{n}]_{s, q}^{q} \right.\\
&\left.+ \int_{\R^{N}} V(\e_{n} x + y_{n}) \left(\frac{1}{p}|v_{n}|^{p} +\frac{1}{q} |v_{n}|^{q}\right) \, dx -\int_{\R^{N}} F(v_{n}) \, dx\right]\\
=&\liminf_{n\ri \infty} \left[ \frac{t_{n}^{p}}{p}[u_{n}]_{s, p}^{p} + \frac{t_{n}^{q}}{q} [u_{n}]_{s, q}^{q} \right.\\
&\left.+ \int_{\R^{N}} V(\e_{n} x) \left(\frac{t_{n}^{p}}{p}|u_{n}|^{p} +\frac{t_{n}^{q}}{q} |u_{n}|^{q}\right) \, dx -\int_{\R^{N}} F(t_{n} u_{n}) \, dx\right]\\
=& \liminf_{n\ri \infty}  \J_{\e_{n}}(t_{n}u_{n}) \leq \liminf_{n\ri \infty}  \J_{\e_{n}}(u_{n})= c_{V_{0}},
\end{align*}
which is impossible. Hence, we can find $y\in \R^{N}$ such that $y_{n}\ri y$. Arguing as before, it is easy to check that $V(y)=V_{0}$. Now, since $v_{n}\ri v$ in $\X_{V_{0}}$ we have that
\begin{align*}
\int_{\R^{N}} f(v_{n})v_{n} \, dx \ri \int_{\R^{N}} f(v)v\, dx.
\end{align*}
Set $L=\int_{\R^{N}} f(v)vdx>0$. Then, for a given $\delta>0$ there exist $R>0$ and $n_{0}\in \mathbb{N}$ such that for all $n\geq n_{0}$
$$
\int_{\R^{N}\setminus \B_{R}(0)} f(v_{n})v_{n}dx<\delta
$$
which implies that
$$
\int_{\B_{R}(0)} f(v_{n})v_{n}dx\geq L-\delta+o_{n}(1).
$$
Since $\tilde{v}_{n}(x)=u_{n}(x+\tilde{y}_{n})=z_{n}(\e_{n}x +y_{n})$ we can see that
$$
\int_{\R^{N}\setminus \B_{\e_{n}R}(y_{n})} f(z_{n})z_{n}dx<\e_{n}^{N}\delta
$$
and for some $C>0$
$$
\int_{\B_{\e_{n}R}(y_{n})} f(z_{n})z_{n}dx\geq C\e_{n}^{N}
$$
for $n\geq n_{0}$. On the other hand, $y_{n}\ri y$ with $V(y)=V_{0}$, so we can find $\bar{R}>0$ such that $\B_{R}(y_{n})\subset \B_{\bar{R}}(y)$ for $n\geq n_{0}$. Consequently, for all $n\geq n_{0}$ it holds
$$
\int_{\R^{N}\setminus \B_{\e_{n}\bar{R}}(y)} f(z_{n})z_{n}dx<\e_{n}^{N}\delta
$$
and
$$
\int_{\B_{\e_{n}\bar{R}}(y)} f(z_{n})z_{n}dx\geq C\e_{n}^{N}.
$$
\end{proof}

\section{Multiplicity of solutions to \eqref{P}}\label{Sect5}
\noindent
This section is devoted to the study of the multiplicity of solutions to \eqref{P}.
The next result will be fundamental to implement the barycenter machinery. Since the proof is similar to the one given in Proposition \ref{prop61} we skip the details.

\begin{proposition}\label{prop6}
Let $\e_{n}\ri 0$ and $\{u_{n}\}\subset \N_{\e_{n}}$ be such that $\J_{\e_{n}}(u_{n})\ri c_{V_{0}}$. Then, there exists a sequence $\{\tilde{y}_{n}\}\subset \R^{N}$ such that $\tilde{v}_{n}(x)=u_{n}(x+ \tilde{y}_{n})$ has a convergent subsequence in $\X_{V_{0}}$. Moreover, up to a subsequence, $y_{n}\ri y\in M$, where $y_{n}=\e_{n}\tilde{y}_{n}$.
\end{proposition}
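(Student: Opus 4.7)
The plan is to mimic the argument of Proposition \ref{prop61}, noting that $\{u_n\}$ is no longer assumed to be a sequence of critical points but merely a minimizing sequence on the Nehari manifold. First, combining $\J_{\e_n}(u_n)\to c_{V_0}$ with $u_n\in\N_{\e_n}$ in the standard way
\begin{align*}
c_{V_0}+o_n(1)=\J_{\e_n}(u_n)-\tfrac{1}{q}\langle\J'_{\e_n}(u_n),u_n\rangle=\left(\tfrac{1}{p}-\tfrac{1}{q}\right)\|u_n\|_{V,p}^p+\int_{\R^{N}}\left(\tfrac{1}{q}f(u_n)u_n-F(u_n)\right)dx
\end{align*}
together with $(f_5)$ and the argument of Lemma \ref{lem3}, I would show $\{u_n\}$ is bounded in $\X_{V_0}$. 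Lemma \ref{lem4} gives $\|u_n\|_{\e_n}\geq\kappa>0$, so if $\sup_{y\in\R^{N}}\int_{\B_R(y)}|u_n|^q\,dx\to 0$ for every $R>0$, Lemma \ref{lemVT} would yield $u_n\to 0$ in $L^t(\R^{N})$ for $t\in(p,\q)$ and, via \eqref{growthf}, $\|u_n\|_{\e_n}\to 0$, a contradiction. Hence there exist $\{\tilde{y}_n\}\subset\R^{N}$ and $R,\beta>0$ with $\liminf_n\int_{\B_R(\tilde{y}_n)}|u_n|^q\,dx\geq\beta$.

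Next, set $\tilde v_n(x)=u_n(x+\tilde y_n)$ and $y_n=\e_n\tilde y_n$. Then $\{\tilde v_n\}$ is bounded in $\X_{V_0}$, and up to a subsequence $\tilde v_n\rightharpoonup\tilde v\neq 0$. Let $t_n>0$ be such that $v_n:=t_n\tilde v_n\in\N_{V_0}$. Exploiting $V(\e_n x+y_n)\geq V_0$ and the translation invariance of the Gagliardo seminorms, I would obtain the chain
\begin{align*}
c_{V_0}&\leq\J_{V_0}(v_n)\leq \frac{t_n^p}{p}[u_n]_{s,p}^p+\frac{t_n^q}{q}[u_n]_{s,q}^q+\int_{\R^{N}}V(\e_n x)\!\left(\frac{t_n^p}{p}|u_n|^p+\frac{t_n^q}{q}|u_n|^q\right)dx\\
&\quad -\int_{\R^{N}}F(t_n u_n)\,dx=\J_{\e_n}(t_n u_n)\leq \max_{t\geq 0}\J_{\e_n}(tu_n)=\J_{\e_n}(u_n)=c_{V_0}+o_n(1),
\end{align*}
where the last equality uses the Nehari characterization (the map $t\mapsto\J_{\e_n}(tu_n)$ has its unique maximum at $t=1$ by $(f_5)$). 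Thus $\J_{V_0}(v_n)\to c_{V_0}$ and $\{v_n\}\subset\N_{V_0}$, and a standard argument gives $0<t_0:=\lim t_n<\infty$. By Corollary \ref{cor3.2}, $v_n\to v=t_0\tilde v$ in $\X_{V_0}$, and hence $\tilde v_n\to \tilde v$ in $\X_{V_0}$.

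Finally, I would show $\{y_n\}$ is bounded and its limit lies in $M$. If $|y_n|\to\infty$, in the case $V_\infty=\infty$ one applies Fatou's lemma to $\int V(\e_n x+y_n)(|\tilde v_n|^p+|\tilde v_n|^q)\,dx$ using $\langle\J'_{\e_n}(u_n),u_n\rangle=0$ to reach a contradiction with the finiteness of $\int f(\tilde v)\tilde v\,dx$. When $V_\infty<\infty$, one uses $V_0<V_\infty$ together with Fatou to obtain
\begin{align*}
c_{V_0}\leq\J_{V_0}(v)<\J_{V_\infty}(v)\leq\liminf_n\J_{\e_n}(t_n u_n)\leq c_{V_0},
\end{align*}
again a contradiction. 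Therefore, along a subsequence, $y_n\to y$, and the same Fatou argument forces $\J_{V(y)}(v)\leq c_{V_0}\leq\J_{V_0}(v)$, which combined with $V(y)\geq V_0$ and $v\not\equiv 0$ gives $V(y)=V_0$, i.e. $y\in M$.

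The principal obstacle is the first step: unlike in Proposition \ref{prop61}, $u_n$ is not a critical point of $\J_{\e_n}$, so we cannot exploit $\J'_{\e_n}(u_n)=0$ to extract concentration via Proposition \ref{prop3}. Non-vanishing must instead be deduced purely from the uniform lower bound $\|u_n\|_{\e_n}\geq\kappa$ of Lemma \ref{lem4} combined with the Lions-type Lemma \ref{lemVT}; once non-vanishing is in hand, the remainder of the argument transfers verbatim from Proposition \ref{prop61}.
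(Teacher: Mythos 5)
Your proposal is correct and follows essentially the approach the paper intends; since the paper only says the proof is ``similar to Proposition \ref{prop61}'' and gives no details, your write-up is a legitimate filling-in of those details, and the substantive chain of estimates matches the paper's argument.

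One remark on your closing commentary: you identify ``the principal obstacle'' as the lack of $\J'_{\e_n}(u_n)=0$, claiming this prevents the use of Proposition \ref{prop3}. In fact, neither Proposition \ref{prop1} nor Proposition \ref{prop3} requires criticality; both only assume $\{u_n\}\subset\N_{\mu}$ (resp.\ $\N_{\e}$) with a level condition and weak convergence to $0$, i.e.\ exactly the Nehari constraint you do have. The real reason Proposition \ref{prop3} cannot be quoted verbatim is that it is stated for a single fixed $\e$ and assumes $u_n\rightharpoonup 0$, whereas here $\e_n$ varies and no weak limit is posited. Your actual workaround --- the uniform bound $\|u_n\|_{\e_n}\geq\kappa$ from Lemma \ref{lem4}, combined with the Lions-type Lemma \ref{lemVT} applied in $W^{s,q}$ and the Nehari identity plus \eqref{growthf} --- is exactly the mechanism employed in the proofs of Propositions \ref{prop1} and \ref{prop3}, and it transfers to the varying-$\e_n$ setting because $V(\e_n x)\geq V_0$ uniformly. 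The rest of your argument (the multiplication by $t_n$, the chain of inequalities forcing $\J_{V_0}(v_n)\to c_{V_0}$, Corollary \ref{cor3.2}, the boundedness of $\{y_n\}$ by dichotomy on $V_\infty$, and the final Fatou argument giving $V(y)=V_0$) reproduces the proof of Proposition \ref{prop61} faithfully, with the one minor observation that the extraction of $\{\tilde y_n\}$ with non-vanishing is, as always, only after passing to a subsequence.
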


Now, fix $\delta>0$ and let $\omega$ be a ground state solution of autonomous problem \eqref{Pmu} with $\mu=V_{0}$, that is $\J_{V_{0}}(\omega)= c_{V_{0}}$ and $\J'_{V_{0}}(\omega)=0$.
Let $\eta\in \C^{\infty}([0, \infty), [0, 1])$ be a nonincreasing cut-off function such that $\eta(\tau)=1$ if $\tau\in [0, \frac{\delta}{2}]$ and $\eta(\tau)=0$ if $\tau \geq \delta$.
For any $y\in M$, we define
\begin{equation*}
\Upsilon_{\e, y}(x)= \eta(|\e x-y|) \omega \left(\frac{\e x-y}{\e}\right).
\end{equation*}
Let $t_{\e}>0$ be the unique number such that
\begin{align*}
\J_{\e}(t_{\e}\Upsilon_{\e, y})= \max_{t\geq 0} \J_{\e}(t \Upsilon_{\e, y})
\end{align*}
and let us introduce the map $\Phi_{\e}:M\ri \N_{\e}$ defined as $\Phi_{\e}(y):=t_{\e}\Upsilon_{\e, y}$. By construction, $\Phi_{\e}(y)$ has compact support for any $y\in M$. Then, we can prove that

\begin{lemma}\label{lem9}
The functional $\Phi_{\e}$ satisfies the following limit
\begin{equation}\label{3.2}
\lim_{\e\rightarrow 0} \J_{\e}(\Phi_{\e}(y))=c_{V_{0}} \mbox{ uniformly in } y\in M.
\end{equation}
\end{lemma}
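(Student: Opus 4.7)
I will argue by contradiction. If the limit is not uniform, there exist $\delta_{0}>0$, $\e_{n}\ri 0$ and $y_{n}\in M$ with
\begin{equation*}
|\J_{\e_{n}}(\Phi_{\e_{n}}(y_{n}))-c_{V_{0}}|\geq \delta_{0}.
\end{equation*}
The first step is the change of variable $z=x-y_{n}/\e_{n}$, which recentres $\Upsilon_{\e_{n},y_{n}}$ as
\begin{equation*}
w_{n}(z):=\eta(\e_{n}|z|)\omega(z),
\end{equation*}
while the weight becomes $V(\e_{n}z+y_{n})$. Translation invariance of the Gagliardo seminorms gives $[\Upsilon_{\e_{n},y_{n}}]_{s,t}=[w_{n}]_{s,t}$ for $t\in\{p,q\}$, and a direct adaptation of Lemma \ref{Psi} with the radial cutoff $\phi(z)=\eta(|z|)$ and scale $r=1/\e_{n}\ri\infty$ yields $w_{n}\ri\omega$ in $W^{s,p}(\R^{N})\cap W^{s,q}(\R^{N})$. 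Since $V_{\infty}>V_{0}$ and $V$ is continuous, the set $M$ is compact, so $V$ is uniformly continuous on a bounded neighbourhood of $M$; combined with dominated convergence this gives
\begin{equation*}
\int_{\R^{N}} V(\e_{n}z+y_{n})|w_{n}|^{t}\,dz \ri V_{0}|\omega|_{t}^{t}, \qquad t\in\{p,q\}.
\end{equation*}

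The second step is to prove $t_{n}:=t_{\e_{n}}\ri 1$. Working in the shifted variable, $t_{n}>0$ is the unique number such that $t_{n}w_{n}$ belongs to the shifted Nehari manifold. Dividing the identity $\langle \J_{\e_{n}}'(t_{n}w_{n}),t_{n}w_{n}\rangle=0$ by $t_{n}^{q}$ yields
\begin{equation*}
t_{n}^{p-q}[w_{n}]_{s,p}^{p}+[w_{n}]_{s,q}^{q}+\int V(\e_{n}z+y_{n})\bigl(t_{n}^{p-q}|w_{n}|^{p}+|w_{n}|^{q}\bigr)dz=\int \frac{f(t_{n}w_{n})}{(t_{n}w_{n})^{q-1}}w_{n}^{q}\,dz.
\end{equation*}
If $t_{n}\ri\infty$, the LHS stays bounded (because $p<q$) while $(f_{4})$-$(f_{5})$ force $f(t_{n}w_{n})/(t_{n}w_{n})^{q-1}\ri\infty$ on $\{\omega>0\}$, so Fatou blows up the RHS. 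If $t_{n}\ri 0$, the term $t_{n}^{p-q}[w_{n}]_{s,p}^{p}$ diverges, while $(f_{5})$ gives $f(t_{n}w_{n})/(t_{n}w_{n})^{q-1}\leq f(w_{n})/w_{n}^{q-1}$ and $(f_{2})$-$(f_{3})$ keep the RHS bounded. Both contradictions show $t_{n}$ is bounded away from $0$ and $\infty$; along a subsequence $t_{n}\ri t_{0}\in(0,\infty)$, and passing to the limit in the displayed identity gives $t_{0}\omega\in\N_{V_{0}}$. Since $\omega\in\N_{V_{0}}$, the uniqueness in Lemma \ref{lem2} forces $t_{0}=1$.

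The third step is to insert these facts into
\begin{equation*}
\J_{\e_{n}}(\Phi_{\e_{n}}(y_{n}))=\frac{t_{n}^{p}}{p}[w_{n}]_{s,p}^{p}+\frac{t_{n}^{q}}{q}[w_{n}]_{s,q}^{q}+\int V(\e_{n}z+y_{n})\left(\frac{t_{n}^{p}}{p}|w_{n}|^{p}+\frac{t_{n}^{q}}{q}|w_{n}|^{q}\right)dz-\int F(t_{n}w_{n})\,dz
\end{equation*}
and pass to the limit: $w_{n}\ri\omega$ in $\X_{V_{0}}$, $t_{n}\ri 1$, the shifted potential term converges by Step~1, and $\int F(t_{n}w_{n})\,dz \ri \int F(\omega)\,dz$ by $(f_{2})$-$(f_{3})$ together with dominated convergence. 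The resulting limit equals $\J_{V_{0}}(\omega)=c_{V_{0}}$, contradicting $|\J_{\e_{n}}(\Phi_{\e_{n}}(y_{n}))-c_{V_{0}}|\geq\delta_{0}$.

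The main obstacle I expect is the uniform handling of the potential term $\int V(\e_{n}z+y_{n})|w_{n}|^{t}\,dz$: the supports of $w_{n}$ swell like $\B_{\delta/\e_{n}}(0)$ while the centres $y_{n}$ vary inside $M$, so one must combine the compactness of $M$ (to bound $V(\e_{n}z+y_{n})$ uniformly on that growing region) with the integrability $\omega\in L^{p}\cap L^{q}$ to justify dominated convergence uniformly in $y_{n}$. The remaining ingredients are close cousins of the computations already developed in Lemma \ref{ce} and Theorem \ref{thm41}.
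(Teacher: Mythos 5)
Your proof follows essentially the same route as the paper: a contradiction argument, the change of variable to recentre $\Upsilon_{\e_n,y_n}$ as $w_n=\phi(\e_n\cdot)\omega$, Lemma~\ref{Psi} plus dominated convergence for the norms and the potential term, a two-sided bound on $t_{\e_n}$ followed by $t_{\e_n}\to 1$, and dominated convergence for the $F$-term. The only noteworthy divergence is in ruling out $t_{\e_n}\to\infty$: the paper pins down a point $\bar z$ where $\omega$ attains a positive minimum on $\B_{\delta/2}(0)$ (which tacitly uses continuity and strict positivity of $\omega$, neither of which the paper establishes in the fractional $p\&q$ setting), whereas you apply Fatou's lemma directly over $\{\omega>0\}$, which is both cleaner and avoids that regularity assumption. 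Your treatment of the potential term is also phrased slightly more carefully (compactness of $M$, hence subsequential convergence $y_n\to y\in M$ and uniform boundedness of $V$ on $M_\delta$), making explicit what the paper leaves implicit in invoking dominated convergence.
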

\begin{proof}
Assume by contradiction that there there exist $\delta_{0}>0$, $\{y_{n}\}\subset M$ and $\e_{n}\rightarrow 0$ such that
\begin{equation}\label{4.41}
|\J_{\e_{n}}(\Phi_{\e_{n}} (y_{n}))-c_{V_{0}}|\geq \delta_{0}.
\end{equation}
Let us observe that Lemma \ref{Psi} and the Dominated Convergence Theorem imply
\begin{align}\label{4.421}
[\Upsilon_{\e_{n}, y_{n}}]_{s, p}^{p} + \int_{\R^{N}} V(\e_{n}x) |\Upsilon_{\e_{n}, y_{n}}|^{p} \, dx \ri [\omega]_{s, p}^{p} +\int_{\R^{N}} V_{0}|\omega|^{p}\, dx
\end{align}
and
\begin{align}\label{4.422}
[\Upsilon_{\e_{n}, y_{n}}]_{s, q}^{q} + \int_{\R^{N}} V(\e_{n}x) |\Upsilon_{\e_{n}, y_{n}}|^{q} \, dx \ri [\omega]_{s, q}^{q} +\int_{\R^{N}} V_{0}|\omega|^{q}\, dx.
\end{align}
Since $\langle \J'_{\e_{n}}(t_{\e_{n}}\Upsilon_{\e_{n}, y_{n}}), t_{\e_{n}} \Upsilon_{\e_{n}, y_{n}}\rangle=0$,
we can use the change of variable ${z=\frac{\e_{n}x-y_{n}}{\e_{n}}}$ to see that
\begin{align}\label{4.411}
&[t_{\e_{n}} \Upsilon_{\e_{n}, y_{n}}]_{s, p}^{p}+ [t_{\e_{n}}\Upsilon_{\e_{n}, y_{n}}]_{s, q}^{q} + \int_{\R^{N}} V(\e_{n}x) \left(  |t_{\e_{n}}\Upsilon_{\e_{n}, y_{n}}|^{p} +|t_{\e_{n}}\Upsilon_{\e_{n}, y_{n}}|^{q}\right) \, dx\nonumber \\
&=\int_{\R^{N}} f(t_{\e_{n}}\Upsilon_{\e_{n}}) t_{\e_{n}}\Upsilon_{\e_{n}} dx \nonumber\\
&=\int_{\R^{N}} f(t_{\e_{n}} \psi(|\e_{n}z|) \omega(z)) t_{\e_{n}} \psi(|\e_{n}z|) \omega(z) \, dz.
\end{align}
Now, let us prove that $t_{\e_{n}}\rightarrow 1$. Firstly we show that $t_{\e_{n}}\rightarrow t_{0}<\infty$. Assume by contradiction that $|t_{\e_{n}}|\rightarrow \infty$. Then, using the fact that $\psi=1$ in $\B_{\frac{\delta}{2}}(0)\subset \B_{\frac{\delta}{2\e_{n}}}(0)$ for $n$ sufficiently large, we can see that \eqref{4.411} and $(f_5)$ give
\begin{align}\label{4.44}
&t_{\e_{n}}^{p-q}[\Upsilon_{\e_{n}, y_{n}}]_{s, p}^{p}+ [\Upsilon_{\e_{n}, y_{n}}]_{s, q}^{q} + \int_{\R^{N}} V(\e_{n}x) \left(  t_{\e_{n}}^{p-q}|\Upsilon_{\e_{n}, y_{n}}|^{p} +|\Upsilon_{\e_{n}, y_{n}}|^{q}\right) \, dx\nonumber \\
&\quad \geq \int_{\B_{\frac{\delta}{2}}(0)} \frac{f(t_{\e_{n}} \omega(z))}{(t_{\e_{n}} \omega(z))^{q-1}} |\omega(z)|^{q} dz \geq \frac{f(t_{\e_{n}} \omega(\bar{z}))}{(t_{\e_{n}} \omega(\bar{z}))^{q-1}} \int_{\B_{\frac{\delta}{2}}(0)} |\omega(z)|^{q}dz
\end{align}
where $\bar{z}$ is such that $\omega(\bar{z})=\min\{\omega(z): |z|\leq \frac{\delta}{2}\}>0$.
Putting together $(f_4)$, $p<q$, $t_{\e_{n}}\rightarrow \infty$, \eqref{4.421} and \eqref{4.422} we can see that \eqref{4.44} implies that $\|\Upsilon_{\e_{n}, y_{n}}\|_{V, q}^{q}\rightarrow \infty$ which gives a contradiction.
Therefore, up to a subsequence, we may assume that $t_{\e_{n}}\rightarrow t_{0}\geq 0$.
If $t_{0}=0$, we can use \eqref{4.421}, \eqref{4.422}, \eqref{4.411}, $p<q$ and $(f_2)$, to get
$$
\|\Upsilon_{\e_{n}, y_{n}}\|_{V, p}^{p}\rightarrow 0,
$$
that is a contradiction. Hence, $t_{0}>0$.
Now, we show that $t_{0}=1$.
Taking the limit as $n\rightarrow \infty$ in \eqref{4.411}, we can see that
\begin{align}\label{TVV1}
t_{0}^{p-q}[\omega]^{p}_{s, p}+ [\omega]^{q}_{s, q}+\int_{\R^{N}} V_{0} (t_{0}^{p-q}|\omega|^{p} dx + |\omega|^{q}) \, dx =\int_{\R^{N}} \frac{f(t_{0}\omega)}{(t_{0}\omega)^{q-1}} \omega^{q} \, dx.
\end{align}
Since $\omega\in  \N_{V_{0}}$ we have
\begin{align}\label{TVV2}
[\omega]^{p}_{s, p}+ [\omega]^{q}_{s, q}+\int_{\R^{N}} V_{0} (|\omega|^{p} dx + |\omega|^{q}) \, dx =\int_{\R^{N}} f(\omega)\omega\, dx.
\end{align}
Putting together \eqref{TVV1} and \eqref{TVV2} we find
\begin{align}\label{TVV1}
(t_{0}^{p-q}-1)[\omega]^{p}_{s, p}+(t_{0}^{p-q}-1) \int_{\R^{N}} V_{0} |\omega|^{p}\, dx =\int_{\R^{N}} \left(\frac{f(t_{0}\omega)}{(t_{0}\omega)^{q-1}}- \frac{f(\omega)}{\omega^{q-1}}\right) \omega^{q}\, dx.
\end{align}
By $(f_{5})$, we can deduce that $t_{0}=1$.
This fact and the Dominated Convergence Theorem yield
\begin{equation}\label{4.45}
\lim_{n\rightarrow \infty}\int_{\R^{N}} F(t_{\e_{n}} \Upsilon_{\e_{n}, y_{n}})\, dx=\int_{\R^{N}} F(\omega)\, dx.
\end{equation}
Hence, taking the limit as $n\rightarrow \infty$ in
\begin{align*}
\J_{\e}(\Phi_{\e_{n}}(y_{n}))=& \frac{t_{\e_{n}}^{p}}{p}[\Upsilon_{\e_{n}, y_{n}}]_{s, p}^{p}+ \frac{t_{\e_{n}}^{q}}{q}[\Upsilon_{\e_{n}, y_{n}}]_{s, q}^{q} \\
&+ \int_{\R^{N}} V(\e_{n}x) \left( \frac{t_{\e_{n}}^{p}}{p} |\Upsilon_{\e_{n}, y_{n}}|^{p} + \frac{t_{\e_{n}}^{q}}{q} |\Upsilon_{\e_{n}, y_{n}}|^{q}\right) \, dx\\
&  -\int_{\R^{N}} F(t_{\e_{n}} \Upsilon_{\e_{n}, y_{n}}) \, dx
\end{align*}
and exploiting \eqref{4.421}, \eqref{4.422} and \eqref{4.45}, we can deduce that
$$
\lim_{n\rightarrow \infty} \J_{\e_{n}}(\Phi_{\e_{n}}(y_{n}))=\J_{V_{0}}(\omega)=c_{V_{0}}
$$
which is impossible in view of \eqref{4.41}.
\end{proof}

For any $\delta>0$, let $\rho=\rho(\delta)>0$ be such that $M_{\delta}\subset \B_{\rho}(0)$, and let $\chi:\R^{N}\ri \R^{N}$ be defined as
\begin{equation*}
\chi(x)=
\left\{
\begin{array}{ll}
x &\mbox{ if } |x|<\rho \\
\dfrac{\rho x}{|x|} &\mbox{ if } |x|\geq \rho.
\end{array}
\right.
\end{equation*}
Finally, let us consider the map $\beta_{\e}: \N_{\e}\ri \R^{N}$ given by
\begin{align*}
\beta_{\e}(u)=\frac{ {\int_{\R^{N}} \chi(\e x) \left(|u|^{p}+ |u|^{q}\right) \, dx}}{ {\int_{\R^{N}} \left(|u|^{p}+ |u|^{q}\right) \, dx}}.
\end{align*}

\begin{lemma}\label{lem10}
The functional $\Phi_{\e}$ verifies the following limit
\begin{align*}
\lim_{\e\ri 0} \beta_{\e}(\Phi_{\e}(y))=y \mbox{ uniformly in } y\in M.
\end{align*}
\end{lemma}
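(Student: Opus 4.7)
The plan is to reduce the limit to a straightforward application of the Dominated Convergence Theorem, after rescaling by $z=\frac{\e x - y}{\e}$ to remove the singular $\e$-scaling from both the numerator and denominator of $\beta_\e(\Phi_\e(y))$. First, since $\Phi_\e(y)=t_\e \Upsilon_{\e,y}$, the factors $t_\e^{p}$ and $t_\e^{q}$ can be pulled outside, giving
\begin{align*}
\beta_\e(\Phi_\e(y))=\frac{t_\e^{p}\int_{\R^{N}} \chi(\e x)|\Upsilon_{\e,y}|^{p}\,dx+t_\e^{q}\int_{\R^{N}} \chi(\e x)|\Upsilon_{\e,y}|^{q}\,dx}{t_\e^{p}\int_{\R^{N}} |\Upsilon_{\e,y}|^{p}\,dx+t_\e^{q}\int_{\R^{N}} |\Upsilon_{\e,y}|^{q}\,dx}.
\end{align*}
After the substitution $z=\frac{\e x - y}{\e}$, the Jacobians $\e^{-N}$ cancel between numerator and denominator, and one obtains
\begin{align*}
\beta_\e(\Phi_\e(y))=y+\frac{t_\e^{p}\int_{\R^{N}}[\chi(y+\e z)-y]|\eta(|\e z|)\omega(z)|^{p}dz+t_\e^{q}\int_{\R^{N}}[\chi(y+\e z)-y]|\eta(|\e z|)\omega(z)|^{q}dz}{t_\e^{p}\int_{\R^{N}}|\eta(|\e z|)\omega(z)|^{p}dz+t_\e^{q}\int_{\R^{N}}|\eta(|\e z|)\omega(z)|^{q}dz}.
\end{align*}

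Next, I will establish uniformity by the usual contradiction argument. Assume there exist $\delta_{0}>0$, a sequence $\e_{n}\to 0$ and points $y_{n}\in M$ with $|\beta_{\e_{n}}(\Phi_{\e_{n}}(y_{n}))-y_{n}|\geq \delta_{0}$. The condition \eqref{V0} together with the continuity of $V$ forces $M$ to be a compact subset of $\R^{N}$ (otherwise one could construct a sequence $|x_{k}|\to\infty$ with $V(x_{k})=V_{0}<V_{\infty}$, contradicting the definition of $V_{\infty}$). Hence, up to a subsequence, $y_{n}\to y^{*}\in M\subset\B_{\rho}(0)$, so $\chi(y_{n}+\e_{n}z)\to\chi(y^{*})=y^{*}$ for every $z\in\R^{N}$. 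From the argument already carried out inside the proof of Lemma~\ref{lem9} (which is independent of the particular choice of $y_{n}\in M$), one has $t_{\e_{n}}\to 1$.

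At this stage, I would apply the Dominated Convergence Theorem to each of the four integrals appearing above, with dominating functions $2\rho|\omega|^{p}$ and $2\rho|\omega|^{q}$ in the numerator, and $|\omega|^{p}$, $|\omega|^{q}$ in the denominator (all in $L^{1}(\R^{N})$ because $\omega\in\X_{V_{0}}\subset L^{p}(\R^{N})\cap L^{q}(\R^{N})$). Passing to the limit $n\to\infty$, the numerator tends to $0$ while the denominator converges to $|\omega|_{p}^{p}+|\omega|_{q}^{q}>0$. Therefore $\beta_{\e_{n}}(\Phi_{\e_{n}}(y_{n}))-y_{n}\to 0$, contradicting the standing assumption.

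The only mildly delicate point is confirming that the convergence $t_{\e}\to 1$ is actually uniform in $y\in M$; but since the proof of Lemma~\ref{lem9} proceeds by contradiction along an arbitrary sequence $(\e_{n},y_{n})\in(0,\infty)\times M$, the very same argument applied to any such sequence delivers $t_{\e_{n}}\to 1$, which is precisely the uniformity needed here. The rest is routine dominated convergence.
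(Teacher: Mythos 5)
Your proof is correct and follows essentially the same route as the paper: argue by contradiction along a sequence $(\e_n,y_n)$, pass to the rescaled variable $z=\frac{\e x-y}{\e}$ so the Jacobians cancel, write $\beta_{\e_n}(\Phi_{\e_n}(y_n))=y_n+(\text{error})$, and kill the error by the Dominated Convergence Theorem using that $\chi(\e_n z+y_n)-y_n\to 0$ pointwise and is dominated by $2\rho(|\omega|^p+|\omega|^q)$.

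The one place you are actually \emph{more} careful than the paper is in keeping the factors $t_{\e}^{p}$ and $t_{\e}^{q}$. The paper's displayed identity for $\beta_{\e_n}(\Phi_{\e_n}(y_n))$ silently drops them, which is not literally an identity when $p\neq q$. You correct this by appealing to $t_{\e_n}\to1$ (harvested from the interior of the proof of Lemma~\ref{lem9}, and, as you note, obtained there along an arbitrary sequence $\e_n\to0$, $y_n\in M$, so the claim is indeed available without circularity). That step is sound, but it is in fact more than is needed: after rescaling, the error term has the form
\begin{align*}
\frac{t_{\e}^{p}A_{\e}+t_{\e}^{q}B_{\e}}{t_{\e}^{p}C_{\e}+t_{\e}^{q}D_{\e}}
=\lambda_{\e}\frac{A_{\e}}{C_{\e}}+(1-\lambda_{\e})\frac{B_{\e}}{D_{\e}},\qquad
\lambda_{\e}=\frac{t_{\e}^{p}C_{\e}}{t_{\e}^{p}C_{\e}+t_{\e}^{q}D_{\e}}\in[0,1],
\end{align*}
so its norm is controlled by $|A_{\e}/C_{\e}|+|B_{\e}/D_{\e}|$, which tends to $0$ by DCT with no information on $t_{\e}$ whatsoever. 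This gives a marginally shorter argument that also avoids importing material from Lemma~\ref{lem9}. Either way, your write-up is a valid proof of the lemma.
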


\begin{proof}
Suppose by contradiction that there exist $\delta_{0}>0$, $\{y_{n}\}\subset M$ and $\e_{n}\rightarrow 0$ such that
\begin{equation}\label{4.4}
|\beta_{\e_{n}}(\Phi_{\e_{n}}(y_{n}))-y_{n}|\geq \delta_{0}.
\end{equation}
Using the definitions of $\Phi_{\e_{n}}(y_{n})$, $\beta_{\e_{n}}$, $\psi$ and the change of variable $z= \frac{\e_{n} x-y_{n}}{\e_{n}}$, we can see that
$$
\beta_{\e_{n}}(\Phi_{\e_{n}}(y_{n}))=y_{n}+\frac{\int_{\R^{N}}[\chi(\e_{n}z+y_{n})-y_{n}] (|\psi (|\e_{n}z|) \omega(z)|^{p}+ |\psi(|\e_{n}z|) \omega(z)|^{q})\, dz}{\int_{\R^{N}} (|\psi(|\e_{n}z|) \omega(z)|^{p}+ |\psi(|\e_{n}z|) \omega(z)|^{q})\, dz}.
$$
Taking into account $\{y_{n}\}\subset M\subset \B_{\rho}(0)$ and the Dominated Convergence Theorem, we can infer that
$$
|\beta_{\e_{n}}(\Phi_{\e_{n}}(y_{n}))-y_{n}|=o_{n}(1)
$$
which contradicts (\ref{4.4}).
\end{proof}

Let $h:[0, \infty) \ri [0, \infty)$ be such that $h(\e)\ri 0$ as $\e\ri 0$. Let $\widetilde{\N}_{\e}=\{u\in \N_{\e} : \J_{\e}(u)\leq c_{V_{0}} + h(\e)\}$.
\begin{lemma}\label{lem12}
Let $\delta>0$ and $M_{\delta}= \{x\in \R^{N} : dist (x, M)\leq \delta\}$. Then
\begin{align*}
\lim_{\e \ri 0} \sup_{u\in \tilde{\N}_{\e}} \inf_{y\in M_{\delta}} |\beta_{\e}(u)-y|=0.
\end{align*}
\end{lemma}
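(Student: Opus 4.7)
The plan is a proof by contradiction. Suppose the conclusion fails. Then there exist $\delta_{0}>0$, a sequence $\e_{n}\to 0$, and $u_{n}\in \widetilde{\N}_{\e_{n}}$ such that
\[
\inf_{y\in M_{\delta}}|\beta_{\e_{n}}(u_{n})-y|\geq \delta_{0}\qquad \forall n\in \mathbb{N}.
\]
Since $u_{n}\in \N_{\e_{n}}$, we have $c_{\e_{n}}\leq \J_{\e_{n}}(u_{n})\leq c_{V_{0}}+h(\e_{n})$, and Lemma~\ref{ce} together with $h(\e_{n})\to 0$ force $\J_{\e_{n}}(u_{n})\to c_{V_{0}}$. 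Therefore Proposition~\ref{prop6} is applicable: there exists $\{\tilde{y}_{n}\}\subset \R^{N}$ such that, up to a subsequence, $\tilde{v}_{n}(x):=u_{n}(x+\tilde{y}_{n})\to \tilde{v}$ in $\X_{V_{0}}$ with $\tilde{v}\not\equiv 0$, and $y_{n}:=\e_{n}\tilde{y}_{n}\to y$ for some $y\in M$.

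The next step is to show $\beta_{\e_{n}}(u_{n})\to y$, which will yield the contradiction since $y\in M\subset M_{\delta}$. Using the change of variable $z=x-\tilde{y}_{n}$ in the definition of $\beta_{\e_{n}}$,
\[
\beta_{\e_{n}}(u_{n})
=y_{n}+\frac{\displaystyle\int_{\R^{N}}\bigl(\chi(\e_{n}z+y_{n})-y_{n}\bigr)\bigl(|\tilde{v}_{n}|^{p}+|\tilde{v}_{n}|^{q}\bigr)\,dz}{\displaystyle\int_{\R^{N}}\bigl(|\tilde{v}_{n}|^{p}+|\tilde{v}_{n}|^{q}\bigr)\,dz}.
\]
The denominator converges to $\int_{\R^{N}}(|\tilde{v}|^{p}+|\tilde{v}|^{q})\,dz>0$ by the strong $\X_{V_{0}}$-convergence (hence convergence in $L^{p}$ and $L^{q}$). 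For the numerator, the pointwise a.e.\ convergence $\e_{n}z+y_{n}\to y$ combined with the continuity of $\chi$ and the fact that $y\in M\subset \B_{\rho}(0)$ (so $\chi(y)=y$) gives $\chi(\e_{n}z+y_{n})-y_{n}\to 0$ a.e. The integrand is dominated by $2\rho\,(|\tilde{v}_{n}|^{p}+|\tilde{v}_{n}|^{q})$, and the $L^{1}$-convergence $|\tilde{v}_{n}|^{p}+|\tilde{v}_{n}|^{q}\to |\tilde{v}|^{p}+|\tilde{v}|^{q}$ allows the Brezis--Lieb / generalized dominated convergence argument to conclude that the numerator tends to $0$.

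Combining both asymptotics yields $\beta_{\e_{n}}(u_{n})=y_{n}+o_{n}(1)\to y\in M\subset M_{\delta}$, hence $\inf_{y'\in M_{\delta}}|\beta_{\e_{n}}(u_{n})-y'|\to 0$, contradicting the standing assumption. The only delicate point is the passage to the limit in the numerator, since the integrand involves the non-compact factor $\chi(\e_{n}z+y_{n})-y_{n}$ paired with $|\tilde{v}_{n}|^{p}+|\tilde{v}_{n}|^{q}$; this is handled by the uniform bound $|\chi|\leq \rho$, the pointwise convergence of the difference to $0$, and the strong convergence of $\tilde{v}_{n}$ in $L^{p}\cap L^{q}$ granted by Proposition~\ref{prop6}.
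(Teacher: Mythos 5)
Your argument is correct and is essentially the proof given in the paper: both reduce to the observation that a sequence $u_{n}\in\widetilde{\N}_{\e_{n}}$ satisfies $\J_{\e_{n}}(u_{n})\to c_{V_{0}}$, invoke Proposition~\ref{prop6} to obtain $\tilde y_{n}$ with $u_{n}(\cdot+\tilde y_{n})$ converging strongly in $\X_{V_{0}}$ and $y_{n}=\e_{n}\tilde y_{n}\to y\in M$, and then pass to the limit in the translated expression for $\beta_{\e_{n}}(u_{n})$ using the uniform bound on $\chi$ and the strong $L^{p}\cap L^{q}$ convergence. The only cosmetic difference is that you frame it as a contradiction while the paper proceeds directly via a near-maximizing sequence; the substance is identical.
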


\begin{proof}
Let $\e_{n}\rightarrow 0$ as $n\rightarrow \infty$. For any $n\in \mathbb{N}$, there exists $\{u_{n}\}\subset \widetilde{\N}_{\e_{n}}$ such that
$$
\sup_{u\in \widetilde{\N}_{\e_{n}}} \inf_{y\in M_{\delta}}|\beta_{\e_{n}}(u)-y|=\inf_{y\in M_{\delta}}|\beta_{\e_{n}}(u_{n})-y|+o_{n}(1).
$$
Therefore, it suffices to prove that there exists $\{y_{n}\}\subset M_{\delta}$ such that
\begin{equation}\label{3.13}
\lim_{n\rightarrow \infty} |\beta_{\e_{n}}(u_{n})-y_{n}|=0.
\end{equation}
Thus, recalling that $\{u_{n}\}\subset  \widetilde{\N}_{\e_{n}}\subset  \N_{\e_{n}}$, we deduce that
$$
c_{V_{0}}\leq c_{\e_{n}}\leq  \I_{\e_{n}}(u_{n})\leq c_{V_{0}}+h(\e_{n})
$$
which implies that $\I_{\e_{n}}(u_{n})\rightarrow c_{V_{0}}$. By Proposition \ref{prop6}, there exists $\{\tilde{y}_{n}\}\subset \R^{N}$ such that $y_{n}=\e_{n}\tilde{y}_{n}\in M_{\delta}$ for $n$ sufficiently large. Thus
$$
\beta_{\e_{n}}(u_{n})=y_{n}+\frac{ {\int_{\R^{N}}[\chi(\e_{n}z+y_{n})-y_{n}] (|u_{n}(z+\tilde{y}_{n})|^{p}+ |u_{n}(z+\tilde{y}_{n})|^{q}) \, dz}}{ {\int_{\R^{N}} (|u_{n}(z+\tilde{y}_{n})|^{p}+|u_{n}(z+\tilde{y}_{n})|^{q}) \, dz}}.
$$
Since $u_{n}(\cdot+\tilde{y}_{n})$ strongly converges in $\X_{V_{0}}$ and $\e_{n}z+y_{n}\rightarrow y\in M$, we deduce that $\beta_{\e_{n}}(u_{n})=y_{n}+o_{n}(1)$, that is (\ref{3.13}) holds true.
\end{proof}

\begin{proof}[Proof of Theorem \ref{thm1}]
By Lemmas \ref{lem9} and \ref{lem12}, we have that $\beta_{\e}\circ \Phi_{\e}$ is homotopic to the inclusion map $id: M\ri M_{\delta}$, which implies that
\begin{align*}
cat_{\tilde{\N}_{\e}} (\tilde{\N}_{\e}) \geq cat_{M_{\delta}}(M).
\end{align*}
Since the functional $\J_{\e}$ satisfies the Palais-Smale condition at level $c\in (c_{0}, c_{0}+h(\e))$, by Ljusternick-Schnirelmann theory of critical points we can conclude that $\J_{\e}$ has at least $cat_{M_{\delta}}(M)$ critical points on $\N_{\e}$. Therefore, by Corollary \ref{cor12}, $\J_{\e}$ has at least $cat_{M_{\delta}}(M)$ critical points in $\X_{\e}$.
\end{proof}

\section{Regularity of solutions to \eqref{P}} \label{Sect6}

\noindent
This last section deals with the regularity of nonnegative solutions to \eqref{P}. More precisely, using a Moser iteration argument \cite{Moser} we are able to prove the following result.

\begin{lemma}\label{lemMoser}
Let $u\in \X_{\e}$ be a nonnegative weak solution to \eqref{P}. Then $u\in L^{\infty}(\R^{N})$ and there exists $K>0$ such that $|u|_{\infty}\leq K$.
\end{lemma}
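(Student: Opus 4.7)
The plan is to perform a Moser-type iteration tailored to the fractional $q$-Laplacian, exploiting that the fractional $p$-Laplacian contribution and the potential terms in \eqref{P} yield nonnegative quantities when tested against monotone powers of $u$, and hence can be simply discarded. For $L>0$ and $\beta\geq 1$, set $u_L:=\min\{u,L\}$ and use $\varphi_L:=u\,u_L^{q(\beta-1)}\in \X_\e\cap L^\infty(\R^N)$ as a test function; write $w_L:=u\,u_L^{\beta-1}$. The key ingredient is the pointwise algebraic inequality
\[
|a-b|^{q-2}(a-b)\bigl(a\,u_L(a)^{q(\beta-1)}-b\,u_L(b)^{q(\beta-1)}\bigr)\geq \frac{c_q}{\beta^{q-1}}\bigl|a\,u_L(a)^{\beta-1}-b\,u_L(b)^{\beta-1}\bigr|^q
\]
valid for $a,b\geq 0$, together with the analogous monotonicity statement for the $p$-Laplacian bracket. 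Integrating against $|x-y|^{-N-sq}\,dx\,dy$, inserting $\varphi_L$ into the weak equation, and discarding $\langle(-\Delta)_p^s u,\varphi_L\rangle\geq 0$, one obtains
\[
\frac{c_q}{\beta^{q-1}}[w_L]_{s,q}^{q}+\int_{\R^N}V(\e x)\bigl(u^{p-1}+u^{q-1}\bigr)\varphi_L\,dx\leq \int_{\R^N}f(u)\,\varphi_L\,dx.
\]

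Applying $(f_{2})$--$(f_{3})$ to estimate $f(u)\leq \xi u^{p-1}+C_\xi u^{r-1}$ with $\xi<V_{0}$, absorbing $\xi u^{p-1}\varphi_L$ into $V(\e x)u^{p-1}\varphi_L$, and noting $V(\e x)u^{q-1}\varphi_L=V(\e x)w_L^{q}$, one arrives at
\[
\|w_L\|_{V,q}^{q}\lesssim \beta^{q-1}\int_{\R^N}u^{r-q}\,w_L^{q}\,dx.
\]
Theorem \ref{Sembedding} gives $|w_L|_{\q}^{q}\lesssim \|w_L\|_{V,q}^{q}$, and H\"older with conjugate exponents $\tfrac{\q}{r-q}$ and $\tfrac{\q}{\q-r+q}$ yields
\[
|w_L|_{\q}^{q}\leq C\,\beta^{q-1}\,|u|_{\q}^{r-q}\,|w_L|_{\alpha}^{q},\qquad \alpha:=\frac{q\,\q}{\q-r+q},
\]
with $\alpha\in(q,\q)$ since $r\in(q,\q)$ by $(f_{3})$. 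Letting $L\to\infty$ via monotone convergence on the right and Fatou's lemma on the left, $w_L$ gets replaced by $u^\beta$, producing the clean bootstrap
\[
|u|_{\beta\q}^{\beta q}\leq C\,\beta^{q-1}\,|u|_{\q}^{r-q}\,|u|_{\beta\alpha}^{\beta q}.
\]

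Setting $\chi:=\q/\alpha=(\q-r+q)/q>1$ and $\beta_n:=\chi^n\beta_0$ (with $\beta_0$ chosen so that $\beta_0\alpha=\q$, which is possible since $u\in L^{\q}(\R^N)$ by Lemma \ref{embedding}), I would take $(q\beta_n)$-th roots and telescope; the convergence of both $\sum_n \chi^{-n}$ and $\sum_n \beta_n^{-1}\log \beta_n$ controls the resulting product of constants and yields $|u|_\infty\leq K$ for some $K>0$ depending only on $|u|_{\q}$ and the data. The main obstacle I expect is proving the pointwise algebraic inequality above with the explicit $\beta^{1-q}$ dependence, particularly under the monotone truncation $u_L$ where the power-chain-rule identity is not available; once this is in hand, together with the routine admissibility of $\varphi_L\in\X_\e$ (via a chain-rule bound for Gagliardo seminorms using $u\in W^{s,q}$ and $u_L\in L^\infty$), the passage $L\to\infty$ and the iteration itself are standard.
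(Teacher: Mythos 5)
Your proposal is correct and follows the same broad architecture as the paper's proof: test the weak formulation with $\varphi_L=u\,u_L^{q(\beta-1)}$, discard the nonnegative fractional $p$-Laplacian bracket, use a discrete chain-rule inequality to pass from the $q$-Laplacian bracket of $\varphi_L$ to the Gagliardo seminorm of $w_L=u\,u_L^{\beta-1}$, apply the Sobolev inequality, and iterate. The pointwise algebraic inequality you flag as your ``main obstacle'' is precisely what the paper establishes via the auxiliary function $\Gamma(t)=\int_0^t(\varphi'(\tau))^{1/q}\,d\tau$ and Jensen's inequality: one gets $|a-b|^{q-2}(a-b)(\varphi(a)-\varphi(b))\geq|\Gamma(a)-\Gamma(b)|^q$, and since $\Gamma'(t)/h'(t)\geq (q\beta-q+1)^{1/q}/\beta$ for $h(t)=t\min(t,L)^{\beta-1}$, one obtains exactly the factor $\frac{q\beta-q+1}{\beta^q}\geq\frac{1}{\beta^{q-1}}$. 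So that ingredient is available and not in fact an obstacle. Your observation that the $p$-Laplacian term is nonnegative by monotonicity of $g(t)=t\min(t,L)^{q(\beta-1)}$ is a compact replacement for the paper's four-case verification and is correct.

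The one genuine point of departure is how the right-hand side is estimated. The paper bounds $f(u)$ by $u^{p-1}+u^{\q-1}$, which puts $\int u^{\q}u_L^{q(\beta-1)}\,dx$ on the right; since $u\in L^{\q}$ is the only starting integrability, this forces a splitting of the domain at a high level $R$ and an $\e$-absorption argument to get the first iterate $u\in L^{(\q)^2/q}$, after which the iteration proceeds directly. You instead exploit the subcritical growth exponent $r<\q$ from $(f_3)$ to get $\int u^{r-q}w_L^q\,dx$ on the right, which leaves room for a single H\"older application with fixed conjugate exponents, producing $|w_L|_{\q}^{q}\leq C\beta^{q-1}|u|_{\q}^{r-q}|w_L|_{\alpha}^{q}$ with $\alpha=q\q/(\q-r+q)\in(q,\q)$. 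This makes the geometric iteration $\beta_{n+1}=\beta_n(\q/\alpha)$ self-contained from step one, no absorption needed, and the convergence of the product of constants is controlled by $\sum_n\beta_n^{-1}\log\beta_n<\infty$ exactly as you say. This is a small but clean simplification of the paper's argument, at the price of using the strict inequality $r<\q$ explicitly; the conclusion is the same.
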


\begin{proof}
For any $L>0$ and $\beta>1$, we take
\begin{equation*}
\varphi(u)= uu_{L}^{q(\beta-1)} \in \X_{\e},
\end{equation*}
where $u_{L}= \min \{u, L\}$, as test function in \eqref{P} and we have
\begin{align*}
&\iint_{\R^{2N}} \frac{|u(x)- u(y)|^{p-2} (u(x)- u(y)) ((uu_{L}^{q(\beta-1)}) (x) - (u u_{L}^{q(\beta-1)})(y))}{|x- y|^{N+sp}} \, dxdy\\
&+ \iint_{\R^{2N}} \frac{|u(x)- u(y)|^{q-2} (u(x)- u(y)) ((uu_{L}^{q(\beta-1)}) (x) - (uu_{L}^{q(\beta-1)})(y))}{|x- y|^{N+sp}} \, dxdy\\
&+ \int_{\R^{N}} V(\e x) |u|^{p} u_{L}^{q(\beta-1)}\, dx+ \int_{\R^{N}} V(\e x) |u|^{q} u_{L}^{q(\beta-1)}\, dx  \\
&= \int_{\R^{N}} f(u) u u_{L}^{q(\beta-1)} \, dx.
\end{align*}
Taking into account \eqref{V0} and \eqref{growthf}, choosing $\xi \in (0, V_{0})$, we obtain
\begin{align}\label{AL2}
&\iint_{\R^{2N}} \frac{|u(x)- u(y)|^{p-2} (u(x)- u(y)) ((uu_{L}^{q(\beta-1)}) (x) - (u u_{L}^{q(\beta-1)})(y))}{|x- y|^{N+sp}} \, dxdy\nonumber \\
&+ \iint_{\R^{2N}} \frac{|u(x)- u(y)|^{q-2} (u(x)- u(y)) ((uu_{L}^{q(\beta-1)}) (x) - (uu_{L}^{q(\beta-1)})(y))}{|x- y|^{N+sp}} \, dxdy\nonumber \\
&\leq C \int_{\R^{N}} |u|^{\q} u_{L}^{q(\beta-1)} \, dx.
\end{align}
Let us define
\begin{align*}
\Lambda(t)= \frac{|t|^{q}}{q} \quad \mbox{ and } \quad \Gamma(t)= \int_{0}^{t} (\varphi'(\tau))^{\frac{1}{q}} d\tau.
\end{align*}
Since $\varphi$ is an increasing function, we can infer
$$
(a-b)(\varphi(a)- \varphi(b))\geq 0 \quad \mbox{ for any } a, b\in \R.
$$
Using this last inequality and applying Jensen's inequality, we obtain
\begin{equation*}\label{AL1}
\Lambda'(a-b)(\varphi(a)- \varphi(b)) \geq |\Gamma(a)- \Gamma(b)|^{q} \quad \mbox{ for any } a, b\in \R,
\end{equation*}
from which it follows that
\begin{equation*}
|\Gamma(u)(x) \!-\! \Gamma(u)(y)|^{q} \leq |u(x)\!-\! u(y)|^{q-2} (u(x)\!-\! u(y)) ((uu_{L}^{q(\beta-1)}) (x) - (u u_{L}^{q(\beta-1)})(y)).
\end{equation*}
We can also note that $\Gamma (u)\geq \frac{1}{\beta} u u_{L}^{\beta-1}$. Thus, by Sobolev inequality we deduce that
\begin{align}\label{AL3}
&\iint_{\R^{2N}} \frac{|u(x)- u(y)|^{q-2} (u(x)- u(y)) ((uu_{L}^{q(\beta-1)}) (x) - (u u_{L}^{q(\beta-1)})(y))}{|x- y|^{N+sp}} \, dxdy \nonumber \\
&\quad \geq [\Gamma (u)]_{s, q}^{q} \geq S_{*} |\Gamma (u)|_{\q}^{q} \geq \frac{S_{*}}{\beta^{q}} |u u_{L}^{\beta-1}|_{\q}^{q}.
\end{align}
Moreover,
\begin{align}\label{AL4}
&\iint_{\R^{2N}} \frac{|u(x)- u(y)|^{p-2} (u(x)- u(y)) ((uu_{L}^{q(\beta-1)}) (x) - (u u_{L}^{q(\beta-1)})(y))}{|x- y|^{N+sp}} \, dxdy\nonumber \\
&=\iint_{\R^{2N}} |u(x)- u(y)|^{p-2} (u(x)- u(y)) \nonumber \\
&\qquad \cdot \frac{ [ (u(x) - u(y)) u_{L, n}^{q(\beta-1)}(x) + u(y) (u_{L}^{q(\beta-1)}(x)- u_{L}^{q(\beta-1)}(y))]}{|x- y|^{N+sp}}\, dxdy\\
&= \iint_{\R^{2N}} \frac{|u(x)- u(y)|^{p}}{|x- y|^{N+sp}}\, u_{L}^{q(\beta-1)}(x)\, dxdy\nonumber \\
&+ \iint_{\R^{2N}} \frac{|u(x)- u(y)|^{p-2} \, (u(x)- u(y)) \,u(y) \,(u_{L}^{q(\beta-1)}(x)- u_{L}^{q(\beta-1)}(y))}{|x- y|^{N+sp}}\, dxdy\geq 0.
\end{align}
Indeed
{\small\begin{align*}
&\iint_{\R^{2N}} \frac{|u(x)- u(y)|^{p-2} \, (u(x)- u(y)) \,u(y) \,(u_{L}^{q(\beta-1)}(x)- u_{L}^{q(\beta-1)}(y))}{|x- y|^{N+sp}}\, dxdy\\
&\displaystyle= \int_{\{u(x) \geq L\}} \int_{\{u(y)\leq L\}} \frac{|u(x)- u(y)|^{p-2} \, (u(x)- u(y)) \,u(y) \,(u_{L}^{q(\beta-1)}(x)- u_{L}^{q(\beta-1)}(y))}{|x- y|^{N+sp}}\, dxdy\\
&+ \int_{\{u(x) \leq L\}} \int_{\{u(y)\leq L\}} \frac{|u(x)- u(y)|^{p-2} \, (u(x)- u(y)) \,u(y) \,(u_{L}^{q(\beta-1)}(x)- u_{L}^{q(\beta-1)}(y))}{|x- y|^{N+sp}}\, dxdy\\
&+ \int_{\{u(x) \geq L\}} \int_{\{u(y)\geq L\}} \frac{|u(x)- u(y)|^{p-2} \, (u(x)- u(y)) \,u(y) \,(u_{L}^{q(\beta-1)}(x)- u_{L}^{q(\beta-1)}(y))}{|x- y|^{N+sp}}\, dxdy\\
&+ \int_{\{u(x) \leq L\}} \int_{\{u(y)\geq L\}} \frac{|u(x)- u(y)|^{p-2} \, (u(x)- u(y)) \,u(y) \,(u_{L}^{q(\beta-1)}(x)- u_{L}^{q(\beta-1)}(y))}{|x- y|^{N+sp}}\, dxdy\\
&=:I+II+III+IV.
\end{align*}}
Note that $III=0$. Moreover $I\geq 0$ because when $u(x) \geq L$ and $u(y)\leq L$ we have
\begin{align*}
u(x)- u(y)\geq u(x)- L\geq 0
\end{align*}
 and
 \begin{align*}  u_{L}^{q(\beta-1)}(x)- u_{L}^{q(\beta-1)}(y)= L^{q(\beta-1)} -u^{q(\beta-1)}(y)\geq 0.
\end{align*}
On the other hand, when $u(x)\leq L$ and $u(y)\leq L$ we can see that
\begin{align*}
&(u(x)- u(y)) (u_{L}^{q(\beta-1)}(x)- u_{L}^{q(\beta-1)}(y))\\
=& (u(x)- u(y)) (u^{q(\beta-1)}(x)- u^{q(\beta-1)}(y)) \geq 0,
\end{align*}
then $II\geq 0$. Finally, when $u(x)\leq L$ and $u(y)\geq L$ we can infer that
\begin{align*}
u(x)- u(y)\leq L- u(y)\leq 0
\end{align*}
 and
 \begin{align*}  u_{L}^{q(\beta-1)}(x)- u_{L}^{q(\beta-1)}(y)= u^{q(\beta-1)}(x)- L^{q(\beta-1)}\leq 0,
\end{align*}
thus $IV\geq 0$. Combining \eqref{AL2}, \eqref{AL3} and \eqref{AL4} we obtain
\begin{align}\label{AL5}
|uu_{L}^{\beta-1}|_{\q}^{q} \leq C \beta^{q} \int_{\R^{N}} u^{\q} u_{L}^{q(\beta-1)} dx.
\end{align}
Take $\beta= \frac{\q}{q}$ and fix $R>0$. Then,
\begin{align*}
\int_{\R^{N}} u^{\q} u_{L}^{\q-q} \, dx& = \int_{\R^{N}}  u^{\q-q} \left(u u_{L}^{\frac{\q-q}{q}}\right)^{q} \, dx\\
&= \int_{\{u\leq R\}} u^{\q-q} \left(u u_{L}^{\frac{\q-q}{q}}\right)^{q} \, dx+ \int_{\{u\geq R\}} u^{\q-q} \left(u u_{L}^{\frac{\q-q}{q}}\right)^{q} \, dx\\
&=:I_{1}+I_{2}
\end{align*}
Recalling that $0\leq u_{L}\leq u$, we can see that
\begin{align*}
I_{1} \leq \int_{\{u\leq R\}} R^{\q-q} u^{\q} \, dx.
\end{align*}
Applying H\"older inequality with $\frac{\q}{\q-q}$ and $\frac{\q}{q}$ we have
\begin{align*}
I_{2}&\leq \left( \int_{\{u\geq R\}} u^{\q}\, dx \right)^{\frac{\q-q}{\q}} \left( \int_{\R^{N}} (u u_{L}^{\frac{\q-q}{\q}})^{\q} \, dx \right)^{\frac{q}{\q}}.
\end{align*}
Let us note that $u\in L^{\q}(\R^{N})$, then for $R$ sufficiently large we can infer
\begin{align*}
\left( \int_{\{u\geq R\}} u^{\q}\, dx \right)^{\frac{\q-q}{\q}} \leq \eta \beta^{-q}.
\end{align*}
Thus we have
\begin{align*}
I_{2} \leq \eta \beta^{-q}  \left( \int_{\R^{N}} (u u_{L}^{\frac{\q-q}{\q}})^{\q} \, dx \right)^{\frac{q}{\q}}.
\end{align*}
Putting together the estimates for $I_{1}$ and $I_{2}$ we obtain
\begin{align}\label{AL6}
\int_{\R^{N}} u^{\q} u_{L}^{\q-q} \, dx\leq \int_{\R^{N}} R^{\q-q} u^{\q} \, dx+ \eta \beta^{-q}  \left( \int_{\R^{N}} (u u_{L}^{\frac{\q-q}{\q}})^{\q} \, dx \right)^{\frac{q}{\q}}.
\end{align}
Combining \eqref{AL5} and \eqref{AL6} we can infer that
\begin{align*}
\left( \int_{\R^{N}} (u u_{L}^{\frac{\q-q}{q}})^{\q} \, dx \right)^{\frac{q}{\q}} &\leq C \beta^{q} \int_{\R^{N}} R^{\q-q} u^{\q} \, dx+ C \eta \left( \int_{\R^{N}} (u u_{L}^{\frac{\q-q}{\q}})^{\q} \, dx \right)^{\frac{q}{\q}}.
\end{align*}
Choosing $\eta <\frac{1}{C}$ we have
\begin{align*}
\left( \int_{\R^{N}} (u u_{L}^{\frac{\q-q}{q}})^{\q} \, dx \right)^{\frac{q}{\q}} &\leq \bar{C} \beta^{q} \int_{\R^{N}} R^{\q-q} u^{\q} \, dx<\infty,
\end{align*}
and taking the limit as $L\ri \infty$ we deduce that $u \in L^{\frac{(\q)^{2}}{q}}(\R^{N})$.

Since $0\leq u_{L}\leq u$ and passing to the limit as $L\ri \infty$ in \eqref{AL5} we have
\begin{align*}
|u|_{\beta \q}^{\beta q} \leq C \beta^{q} \int_{\R^{N}} u^{\q+q(\beta-1)} \, dx,
\end{align*}
from which we deduce that
\begin{align}\label{AL7}
\left( \int_{\R^{N}} u^{\beta \q}\, dx \right)^{\frac{1}{q(\beta-1)}} \leq (\bar{C} \beta)^{\frac{1}{\beta-1}} \left( \int_{\R^{N}} u^{\q+ q(\beta-1)}\, dx \right)^{\frac{1}{q(\beta-1)}}.
\end{align}

For $m\geq 1$ we set
\begin{equation*}
\q+ q(\beta_{m+1}-1)= \beta_{m}\,\q \quad \mbox{ and } \quad \beta_{1}= \frac{\q}{q}.
\end{equation*}
In particular
\begin{equation*}
\beta_{m+1}= \beta_{1}^{m}(\beta_{1}-1)+1,
\end{equation*}
and $\lim_{m\ri \infty} \beta_{m}= \infty$. Let us define $\displaystyle{\Psi_{m}:= \left( \int_{\R^{N}} u^{\q \beta_{m}} \, dx \right)^{\frac{1}{\q(\beta_{m}-1)}}}$. Then \eqref{AL7} becomes
\begin{align*}
\Psi_{m+1}\leq (\bar{C} \beta_{m+1})^{\frac{1}{\beta_{m+1} -1}} \Psi_{m}.
\end{align*}
Hence, we can find $C_{0}>0$ independent of $m$ such that
\begin{align*}
\Psi_{m+1}\leq \prod_{k=1}^{m} (\bar{C} \beta_{k+1})^{\frac{1}{\beta_{k+1} -1}} \, \Psi_{1}\leq C_{0} \Psi_{1}.
\end{align*}
Taking the limit as $m\ri \infty$ we get $|u|_{\infty}\leq K$.
\end{proof}

\section*{Acknowledgements}
C. O. Alves was partially supported by CNPq/Brazil  Proc. 304804/2017-7

\end{document}